\newcommand{\N}{\ensuremath{\mathbb N}}
\newcommand{\Z}{\ensuremath{\mathbb Z}}
\newcommand{\R}{\ensuremath{\mathbb R}}
\newcommand{\Q}{\ensuremath{\mathbb Q}}
\newcommand{\X}{\ensuremath{\mathbb X}}
\newcommand{\Xb}{\ensuremath{\mathbf X}}
\newcommand{\D}{\ensuremath{{\mathcal D}}}
\newcommand{\T}{\ensuremath{{\mathcal T}}}
\newcommand{\A}{\ensuremath{{\mathcal A}}}
\newcommand{\Xc}{\ensuremath{{\mathcal X}}}
\renewcommand{\rho}{\varrho}
\renewcommand{\phi}{\varphi}
\newcommand{\RR}{\mathfrak R}
\DeclareMathOperator{\cl}{cl}
\DeclareMathOperator{\id}{id}
\newtheorem{thm}{Theorem}[section]
\newtheorem{defi}[thm]{Definition}
\newtheorem{cor}[thm]{Corollary}
\newtheorem{prop}[thm]{Proposition}
\theoremstyle{definition}
\newtheorem{rem}[thm]{Remark}
\newtheorem{example}[thm]{Example}
\begin{document}

\title{Selfdual substitutions in dimension one}

\author[V.~Berth\'e]{Val\'erie Berth\'e}
\address{LIAFA - CNRS   UMR 7089, Universit\'e Paris Diderot -- Paris 7,
Case 7014, 75205 Paris Cedex 13, France} 
\email{berthe@liafa.jussieu.fr}
\urladdr{http://www.lirmm.fr/\textasciitilde  berthe}

\author[D.~Frettl\"oh]{Dirk Frettl\"oh}
\address{Institut f\"ur Mathematik und Informatik, Universit\"at Greifswald /
Institut f\"ur Mathematik, FU Berlin}
\email{dirk.frettloeh@udo.edu}
\urladdr{http://page.mi.fu-berlin.de/frettloe/}

\author[V.~Sirvent]{Victor Sirvent}
\address{ Departamento de Matem\'aticas, Universidad Sim\'on
Bol\'{\i}var, Apartado 89000, Caracas 1086-A, Venezuela}
\email{vsirvent@usb.ve}
\begin{abstract} 
There are several notions of the `dual' of a word/tile substitution. 
We show that the most common ones are equivalent for substitutions
in dimension one, where we restrict ourselves to the case of two
letters/tiles. Furthermore, we obtain necessary and sufficient
arithmetic conditions for substitutions being selfdual in this case. 
Since many connections between the different notions of word/tile
substitution are discussed,  this paper may also serve as a survey
paper on this topic.
\end{abstract} 
\date{\today}
\keywords{Substitution, Sturmian substitution, invertible substitution, Sturmian word, dual substitution, tiling, free group}

\subjclass[2000]{28A80,37B10,52C23,68R15}
\maketitle


\section{Introduction}

Substitutions are simple but powerful tools to generate a large
number of nonperiodic structures with a high degree of order. Examples
include infinite words (e.g., the Thue Morse sequence, see \cite{Que,fogg}), infinite
tilings (e.g., Penrose tilings, see  \cite{Rob,So}) and discrete point sets (e.g., models
of atomic positions in quasicrystals, see \cite{baaqc,moo}). Here we consider several
instances of the concept of substitutions:
\begin{itemize}
\item[(a)] word substitutions 
\item[(b)] nonnegative endomorphisms of the free group $F_2 = \langle a, b \rangle$ 
\item[(c)] tile-substitutions
\item[(d)] dual maps of substitutions 
\end{itemize}
Each of the concepts above gives rise to the concept of a dual
substitution. Our first goal is to show the full equivalence of the
distinct notions of dual substitution with respect to the concepts
above for the case of two letters,   or   of two tiles in $\R^1$. Thus
we will exclusively study  substitutions on two letters,
and for the sake of clarity, we will define every term for this
special case only. From here on, we will consider only this case,
whether or not this is stated explicitly.

Let us mention that there is a wealth of results for
word  substitutions on two letters, thus for tile-substitutions 
in $\R^1$ with two tiles. For a start see \cite{Que,Lot2,fogg} and references
therein. The following theorem lists some interesting results which
emerged in the work of many authors during the last decades.  We recall that  a word  substitution   is   a morphism of the free monoid.
A  word substitution is said to be primitive  if there exists  a  power $n$ such that
the image of    any letter by $\sigma ^n$ contains all the letters of the alphabet.

\begin{thm} \label{thm:intro}
Let $\sigma$ be a primitive word substitution on two letters. 
Then the following are equivalent:
\begin{enumerate}
\item Each bi-infinite word $u$ generated by $\sigma$ is 
  {\em Sturmian}, i.e., $u$ contains exactly $n+1$ different words of
  length $n$ for all $n \in \N$.  
\item The endomorphism $\sigma: F_2 \to F_2$ is invertible, i.e.,
  $\sigma \in \text{Aut}(F_2)$. 
\item Each tiling generated by $\sigma$ is a cut and project tiling
  whose window is an interval.  
\item There is $k \ge 1$ such that the substitution $\sigma^k$ is
  conjugate to $L^{a_1}  E {L}^{a_2} E  L^{a_3} \cdots
E{L}^{a_m}$, where $a_i \in \N \cup \{0\}$, $L: \, a \to a,
 \, b \to ab$, $E: \, a \to b, \, b \to a$. 
\end{enumerate}
\end{thm}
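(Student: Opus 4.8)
The statement collects results from several sources, so the plan is to treat it as the three equivalences $(1)\Leftrightarrow(2)$, $(2)\Leftrightarrow(4)$ and $(1)\Leftrightarrow(3)$, and to supply the glue between the relevant citations. Throughout I would use primitivity to identify the two-sided subshift of $\sigma$ with the orbit closure of a fixed point of a suitable power $\sigma^p$, together with unique ergodicity, so that the four conditions are all read on one minimal, uniquely ergodic system whose letter frequencies come from the Perron eigenvector of the incidence matrix.

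$(1)\Leftrightarrow(2)$. Here I would invoke the theory of Sturmian morphisms. Recall that a morphism of $\{a,b\}^*$ is called Sturmian if it maps Sturmian words to Sturmian words, and that the basic theorem (Mignosi--S\'e\'ebold, S\'e\'ebold, Wen--Wen; see \cite[Ch.~2]{Lot2}, \cite[Ch.~2]{fogg}) identifies the monoid of Sturmian morphisms on two letters with the set of morphisms whose canonical extension to $F_2$ belongs to $\text{Aut}(F_2)$. For $(2)\Rightarrow(1)$ one applies such a morphism to a Sturmian word whose slope is the fixed point of the M\"obius action of the incidence matrix; its iterates stay Sturmian with that slope and converge to the fixed point $\sigma^p$, which therefore lies in the (closed) Sturmian subshift of that slope. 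For $(1)\Rightarrow(2)$ one uses that $\sigma$ then maps the minimal set $\Omega_\sigma$, now entirely Sturmian, into itself, together with the known description of the substitutions fixing (standard) Sturmian words; the only routine point is the passage between the subshift formulation of~(1) and the ``maps Sturmian to Sturmian'' formulation, which is unproblematic once $\Omega_\sigma$ is minimal.

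$(2)\Leftrightarrow(4)$. The implication $(4)\Rightarrow(2)$ is immediate: $E$ and $L$ lie in $\text{Aut}(F_2)$, hence so does any word in them; conjugation is an inner automorphism, so conjugate substitutions are simultaneously invertible; and if $\sigma^k\in\text{Aut}(F_2)$ then $\sigma$ is injective on $F_2$ while $\sigma(F_2)\supseteq\sigma^k(F_2)=F_2$, so $\sigma\in\text{Aut}(F_2)$. For $(2)\Rightarrow(4)$ I would start from the structure theorem of Wen--Wen: the monoid of invertible substitutions on two letters is generated by $E$, $L$ and the mirror substitution $\widetilde L\colon a\mapsto a,\ b\mapsto ba$. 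Since $a\cdot\widetilde L(x)=L(x)\cdot a$ for $x\in\{a,b\}$, the substitution $\widetilde L$ is conjugate to $L$; and because two positive words that agree in $F_2$ agree letter by letter, this conjugacy propagates through compositions, so every invertible substitution is conjugate to a word in $E$ and $L$ only. Collapsing $E^2=\id$ writes such a word literally in the form $L^{a_1}EL^{a_2}E\cdots EL^{a_m}$ with $a_i\ge 0$, and passing if needed to a power $\sigma^k$ lets one take the chosen representative to be the one read off from the continued-fraction expansion of the slope of $\sigma$, which is eventually periodic because the slope, being an eigendirection of an integer matrix, is a quadratic irrational.

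$(1)\Leftrightarrow(3)$. This is the geometric face of the same statement. Realising the two letters as intervals whose lengths form a left Perron eigenvector of the incidence matrix turns $\Omega_\sigma$ into a tiling of $\R$; on the other hand a Sturmian word is, up to this coding, exactly the natural coding of an irrational rotation of the circle by the two atoms of a partition into an interval and its complement, equivalently a one-dimensional cut-and-project set with a half-open interval as window, and conversely every such coding is Sturmian (see \cite{baaqc}, \cite{moo}, \cite[Ch.~2]{fogg}). Hence the tiling generated by $\sigma$ is a cut-and-project tiling whose window is an interval precisely when the underlying bi-infinite word is Sturmian. The step I expect to cost the most work is the bookkeeping inside $(2)\Rightarrow(4)$: tracking conjugacy of substitutions through compositions and extracting the exponents $a_i$ and the power $k$ from the continued fraction of the slope; the remaining implications are, in effect, assembled from the cited literature.
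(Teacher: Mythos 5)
Your proposal is correct and is in substance the same as the paper's: the paper offers no argument of its own for Theorem~\ref{thm:intro}, only attributions ((2)$\Leftrightarrow$(4) to Wen--Wen, (2)$\Leftrightarrow$(3) to Lamb, (1)$\Leftrightarrow$(3) to Morse--Hedlund and Coven--Hedlund, (1)$\Leftrightarrow$(4) to S\'e\'ebold), and you assemble the same body of results, merely choosing the logically equivalent chain (1)$\Leftrightarrow$(2), (2)$\Leftrightarrow$(4), (1)$\Leftrightarrow$(3) and supplying sensible glue (the relation $a\,\widetilde{L}(x)=L(x)\,a$, the propagation of conjugacy through composition, and the rotation-coding dictionary). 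Nothing in your sketch would fail; the only cosmetic slip is the phrase ``converge to the fixed point $\sigma^p$'', which should read ``the fixed point \emph{of} $\sigma^p$''.
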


The equivalence of (2) and (4) follows from \cite{wen}. The
equivalence of (2) and (3) appears in \cite{lamb}, and relies on
earlier results, see references in \cite{lamb,beir}. The equivalence 
of (1) and (3) is due to \cite{morse,coven}. 
For the equivalence of (1) and (4), see \cite{Seebold98}.
For more details,  see   \cite{Lot2,fogg}.

This theorem links automorphisms of the free group $F_2$ with
bi-infinite words over two letters, and  with tilings of the line by
intervals of two distinct lengths. Our motivation was to work out 
this correspondence. 

The paper is organised as follows: Section \ref{sec:word} provides the 
necessary definitions and facts about word substitutions, and about
(inverses of) substitutions as automorphisms of $F_2$. Section
\ref{sec:tilings} contains the relevant definitions and facts about
(duals of) substitution tilings and cut and project tilings.  The very close relation of the natural decomposition
method of \cite{sirw} to duals of cut and project tilings  via so-called Rauzy
fractals is given in
Section \ref{sec:dcm}. Section
\ref{sec:dual} introduces generalised substitutions and
 the essential definitions and facts for
strand spaces and dual maps of substitutions. Any reader familiar with
any of this may skip the corresponding section(s). Section
\ref{sec:relations} explains how the distinct concepts of
dual substitutions are related. The first main result, Theorem
\ref{thm:inv}, shows that they all are equivalent, in the sense,
that they generate equivalent hulls. (For the definition of the hull
of a word substitution
see Equation \eqref{eq:hull} below. For a precise definition of
equivalence of hulls, see Section \ref{sec:relations}.) In Section
\ref{sec:selfdual} we give several necessary and sufficient conditions
for a substitution to be selfdual, namely, Theorems \ref{thm:master}
and \ref{thm:sdfreq}. These results yield a complete classification of
selfdual word substitutions on two letters. 

\section {Word substitutions} \label{sec:word}

\subsection{Basic definitions and notation} \label{subsec:basic}
Let $\A=\{ a,b\}$ be a finite alphabet, let $\A^{\ast}$ be the set of all finite words over $\A$,
and let $\A^{\Z} = \{(u_i)_{i \in \Z}  \mid u_i \in \A \}$ be the set
of all bi-infinite words over $\A$.    A   substitution $\sigma$ is a map
from $\A$ to $\A^{\ast}\setminus \{ \varepsilon \}$; it is  called {\em non-erasing}  if  it
 maps $\A$ to $\A^{\ast}\setminus \{ \varepsilon \}$, where
$\varepsilon$ denotes the empty word. We will exclusively consider here non-erasing substitutions. By concatenation, a substitution
extends to a map from $\A^{\ast}$ to $\A^{\ast}$, and to a map from 
$\A^{\Z}$ to $\A^{\Z}$.  
Thus a substitution can be iterated. For
instance, consider the Fibonacci substitution
\begin{equation} \label{eq:fib1}
\sigma: \A \to \A^{\ast}, \quad \sigma(a)=ab, \; \sigma(b)=a. 
\end{equation}
We abbreviate this long notation by $\sigma: a \to ab, b \to a$. Then,
$\sigma(a) = ab$, $\sigma^2(a)=\sigma(\sigma(a))=\sigma(ab) =
\sigma(a) \sigma(b) = aba$, $\sigma^3(a)=abaab$, and so on.
In order to rule out certain non-interesting cases, for instance
$E\colon  a \mapsto b, b\mapsto a$, the following definition is useful. 

\begin{defi} \label{def:prim}
 Let $\sigma$ be a   substitution   defined on ${\mathcal A}$.  The
 {\em substitution matrix}  $M_\sigma$
 associated with $\sigma$ is defined  as  its Abelianisation, i.e.,
 \[ \ M_{\sigma} =(|\sigma(j)|_i)_{1 \le i,j \le 2}  ,\]   
 where $|w|_i$  stands  for the number of    occurrences of the letter
 $i$ in $w$.\\
 A substitution $\sigma$ on $\A$ is called {\em primitive}, if its  substitution matrix is primitive, that is, if 
 $(M_{\sigma})^n >0$ for some $n \in \N$. \\
A substitution $\sigma$ is called {\em unimodular}, if its
substitution matrix $M_{\sigma}$ has determinant $1$ or $-1$.
\end{defi}

In the sequel, we will consider unimodular primitive substitutions
only. The requirement of primitivity is a common one, essentially this
rules out some pathological cases. The requirement of unimodularity is
a restriction which we use because we will focus on the case where the
substitution is invertible (see Section \ref{subsec:invert} for a definition).

We will  consider the set   of {\em all} bi-infinite
words which are `legal' with respect to a substitution $\sigma$.  One  reason for this choice
of working with bi-infinite words (instead of infinite words)  comes from  the   connections  
with  tile-substitutions that we   stress in Section~\ref{sec:tilings}.
Thus we define the {\em hull} $\Xc_{\sigma}$ of $\sigma$ as follows. 
\begin{equation} \label{eq:hull}
\Xc_{\sigma} = \{ u  \in \A^{\mathbb Z}  \mid \text{each subword of }u\text{ 
is a subword of}\; \sigma^n(a)\, \text{or}\; \sigma^n(b)\, \text{for
some}\, n\}. 
\end{equation}   
Subwords of a word are commonly also called `factors' of that word. 
\begin{prop}\cite{Que,fogg} \label{prop:ssn} 
For any primitive substitution $\sigma$ and for all $n \ge 1$ holds:
$\Xc_{\sigma^n}=\Xc_{\sigma}$.
\end{prop}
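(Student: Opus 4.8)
The plan is to reduce the claim to an equality of languages. For a primitive substitution $\tau$, let $\mathcal{L}_\tau$ denote the set of all finite words that occur as a subword of $\tau^m(a)$ or of $\tau^m(b)$ for some $m \ge 0$; then by \eqref{eq:hull} we have $\Xc_\tau = \{ u \in \A^{\Z} \mid \text{every subword of } u \text{ belongs to } \mathcal{L}_\tau \}$. Hence it suffices to prove that $\mathcal{L}_{\sigma^n} = \mathcal{L}_\sigma$, since then $\Xc_{\sigma^n}$ and $\Xc_\sigma$ are carved out of $\A^{\Z}$ by one and the same condition. One inclusion is trivial: every word $(\sigma^n)^k(x) = \sigma^{nk}(x)$ occurs among the words $\sigma^m(x)$ (take $m = nk$), so $\mathcal{L}_{\sigma^n} \subseteq \mathcal{L}_\sigma$ and therefore $\Xc_{\sigma^n} \subseteq \Xc_\sigma$.

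For the reverse inclusion I would use primitivity as follows. Since $M_\sigma$ is primitive there is some $N \ge 1$ with $M_\sigma^{N} > 0$, and in fact $M_\sigma^{r} > 0$ for every $r \ge N$ — a primitive nonnegative matrix has no zero row, so this follows at once from $M_\sigma^{r} = M_\sigma^{r-N} M_\sigma^{N}$. In combinatorial terms, both letters occur in $\sigma^{r}(a)$ and in $\sigma^{r}(b)$ for all $r \ge N$. Now let $w \in \mathcal{L}_\sigma$, say $w$ is a subword of $\sigma^{m}(x_0)$ for some $m \ge 0$ and some $x_0 \in \A$. Pick $k \ge 1$ large enough that $nk - m \ge N$, and write $\sigma^{nk}(a) = \sigma^{m}\bigl(\sigma^{nk-m}(a)\bigr)$. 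Since $\sigma^{nk-m}(a)$ contains the letter $x_0$, applying $\sigma^{m}$ shows that $\sigma^{m}(x_0)$ is a subword of $\sigma^{nk}(a) = (\sigma^n)^k(a)$, and hence so is $w$. Thus $w \in \mathcal{L}_{\sigma^n}$, which gives $\mathcal{L}_\sigma \subseteq \mathcal{L}_{\sigma^n}$.

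Combining the two inclusions yields $\mathcal{L}_{\sigma^n} = \mathcal{L}_\sigma$, hence $\Xc_{\sigma^n} = \Xc_\sigma$. I do not anticipate a genuine obstacle here: the proof is bookkeeping with subwords and with powers of $\sigma$. The one point that deserves a moment's care is the passage from ``$M_\sigma^{N} > 0$ for some $N$'' to ``$M_\sigma^{r} > 0$ for all $r \ge N$'', which is needed because the value $nk-m$ we can realise is constrained to a fixed residue class modulo $n$; this stronger statement is a standard property of primitive matrices. It is also worth noting that it is precisely primitivity that makes the argument go through, as it guarantees that any subword occurring once reappears inside arbitrarily high powers of $\sigma$.
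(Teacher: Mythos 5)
Your proof is correct. The paper does not prove Proposition \ref{prop:ssn} but cites it to \cite{Que,fogg}, and your argument is exactly the standard one found there: the easy inclusion from $(\sigma^n)^k=\sigma^{nk}$, and the reverse inclusion by writing $\sigma^{nk}(a)=\sigma^m(\sigma^{nk-m}(a))$ and using that $M_\sigma^r>0$ for all sufficiently large $r$ (which you justify correctly) to find an occurrence of $x_0$ in $\sigma^{nk-m}(a)$.
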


There is a further notion of a hull, namely, the hull of a bi-infinite
word. In our framework (substitutions are assumed to be primitive) the two notions coincide. The hull $\Xc_u$ of a
bi-infinite word $u$ is defined as the closure of the orbit $\{ S^k u
\mid k \in \Z \}$ in the obvious topology, see \cite{Que,fogg}. Here
$S$ denotes the shift operator $S  u = S (u_i)_{i \in \Z} =
(u_{i+1})_{i \in \Z}$,    which yields $S  ^{-1} u = 
(u_{i-1})_{i \in \Z}$.

\begin{prop} \label{uinxc}  
Let $\sigma$ be a primitive substitution. Then, for each $u \in
\Xc_{\sigma}$, $\Xc_{\sigma} = \Xc_u$. In particular, $\Xc_{\sigma}$ is
determined by each $u \in \Xc_{\sigma}$ uniquely.
\end{prop}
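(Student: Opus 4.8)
The plan is to establish the two inclusions $\Xc_u \subseteq \Xc_\sigma$ and $\Xc_\sigma \subseteq \Xc_u$ for an arbitrary fixed $u \in \Xc_\sigma$. The first inclusion is the easy direction: by definition of $\Xc_\sigma$ every subword of $u$ is a subword of $\sigma^n(a)$ or $\sigma^n(b)$ for some $n$, and the same is true of any shift $S^k u$ since shifting does not create new subwords; moreover $\Xc_\sigma$ is closed in the product topology on $\A^\Z$ (this follows directly from the fact that membership is a condition on finite subwords only), so $\Xc_\sigma$ contains the closure of the orbit $\{ S^k u \mid k \in \Z\}$, which is exactly $\Xc_u$.

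For the reverse inclusion $\Xc_\sigma \subseteq \Xc_u$, the key step is a \emph{uniform recurrence} argument supplied by primitivity. First I would record the standard consequence of primitivity: there is a constant $C$ such that for every letter $c \in \A$ and every $n$, the word $\sigma^n(c)$ contains every subword of every $\sigma^{n'}(c')$ of length at most $|\sigma^{n'}(c')|/C$ — more to the point, every legal word $w$ (subword of some $\sigma^n(c)$) reappears in $u$ with bounded gaps. Concretely: fix any legal word $w$. By primitivity there is $N$ with $M_{\sigma^N}>0$, so $\sigma^N(a)$ and $\sigma^N(b)$ both contain $a$ and $b$; iterating, for some $m$ the word $\sigma^m(c)$ contains $w$ as a subword for \emph{both} $c=a$ and $c=b$. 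Now take any $v \in \Xc_\sigma$; by definition $v$ is a concatenation — in particular, since every $v_{[0,\ell]}$ is a subword of some $\sigma^{n}(c)$, and $\sigma^n(c)$ is a concatenation of blocks $\sigma^m(a),\sigma^m(b)$ once $n \ge m$ — each such block contains $w$. Hence $w$ occurs in $v$ with gaps bounded by $2\max(|\sigma^m(a)|,|\sigma^m(b)|)$, uniformly in $v$. Applying this to $v = u$: the word $w$ occurs in $u$. Since $w$ was an arbitrary legal word and every subword of any $v \in \Xc_\sigma$ is legal, every subword of $v$ occurs in $u$, i.e.\ $v$ lies in the closure of the shift orbit of $u$, so $v \in \Xc_u$.

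Combining the two inclusions gives $\Xc_\sigma = \Xc_u$ for every $u \in \Xc_\sigma$, and in particular $\Xc_\sigma$ is determined by any one of its elements. The main obstacle is the bounded-gap (linear repetitivity / uniform recurrence) estimate from primitivity; once that is in hand — and it is the classical Perron–Frobenius-type argument for primitive substitutions, see \cite{Que,fogg} — the rest is bookkeeping about subwords and the product topology. One should be slightly careful that $\Xc_\sigma$ is nonempty and that every $v \in \Xc_\sigma$ genuinely decomposes into level-$m$ blocks in the relevant range of indices; primitivity (hence aperiodicity of the relevant matrix entries growing without bound) handles this, so no case is vacuous.
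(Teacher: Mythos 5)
The paper gives no proof of Proposition~\ref{uinxc}: it is quoted as a standard fact (minimality of primitive substitution systems), with the surrounding discussion pointing to \cite{Que,fogg}. Your argument is the standard one and is correct: the inclusion $\Xc_u\subseteq\Xc_\sigma$ from closedness and shift-invariance of $\Xc_\sigma$, and the reverse inclusion from uniform recurrence, i.e.\ choosing $m$ so that both $\sigma^m(a)$ and $\sigma^m(b)$ contain a given legal word $w$ and observing that every sufficiently long legal word contains a complete level-$m$ block. One phrase to tighten: ``$v$ is a concatenation'' of level-$m$ blocks is not literally what the definition of $\Xc_\sigma$ gives you (that would be a recognizability-type statement); what you actually use, and what your parenthetical correctly supplies, is only that each long subword of $v$ sits inside some $\sigma^n(c)=\sigma^m(\sigma^{n-m}(c))$ with $n\ge m$ --- and to guarantee $n\ge m$ you should note that by primitivity $c$ occurs in $\sigma^N(c)$, so any word legal at level $n$ is also legal at all levels $n+kN$. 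With that said explicitly, the proof is complete.
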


Let $\sigma$ be a primitive  substitution. By the Perron-Frobenius
theorem,  its  substitution matrix $M_{\sigma}$ has a dominant
real eigenvalue $\lambda >1$, which has a  positive
eigenvector (i.e.,  an eigenvector having  all its components $>0$).  We call $\lambda$ the {\em  inflation factor} of
$\sigma$ (see Section \ref{subsec:substil} for a justification of this term).
If $(1-\alpha,\alpha)$ is the normed eigenvector  of $\lambda$ ($0 <
\alpha <1$), that is, the vector  whose sum of coordinates equals  $1$, then $\alpha$ is called the {\em  frequency} of the
substitution. Indeed, it is not hard to see that every bi-infinite word
in $\Xc_\sigma$ has well defined letter frequencies $1-\alpha$ and
$\alpha$, see for instance \cite{Que}.   Left eigenvectors  associated with
the inflation  factor $\lambda$ will also play  an important role here.  We will  work  with 
the  positive  left eigenvector $v_{\lambda}=(1, l_{\lambda})$ with first coordinate $1$.

In all that follows,    $\sigma(i)_k$ stands for the $k$-th letter of
$\sigma(i)$,  $|\sigma(i)|$ is the length of
$\sigma(i)$ and $\sigma(i)[k-1]$
 is
the prefix of length $k-1$ of $\sigma(i)$  (with  $\sigma(i)_0$  being    the empty word), i.e., 
\[\sigma(i)=\sigma(i)_1\cdots \sigma(i)_{|\sigma(i)|}=
\sigma(i)[k-1] \sigma(i)_k \sigma(i)_{k+1}\cdots \sigma(i)_{|\sigma(i)|} .\]

We denote by 
$A$   the  {\em Abelianisation map }
from ${\mathcal A} ^*$ to $\Z^2$: if $w$ is a   word  in 
${\mathcal A}^*$, then $A(w)$ is the vector
that counts the number of occurrences of each letter in $w$, i.e.,
$A\colon{\mathcal A} ^*\to\Z^2, \  w\mapsto (|w|_{a},|w|_{b}).$
There is an obvious commutative diagram, where  $M_{\sigma}$  stands
for  the  substitution  matrix of  
$\sigma$:

\[\begin{array}{ccc} {\mathcal A} ^{\ast} &
\stackrel{\sigma}{\longrightarrow}
& {\mathcal A}^{\ast} \\
A \downarrow  && \downarrow A \\
{\Z}^2 & \stackrel{ M _{\sigma}}{\longrightarrow} &{\Z}^2. \\
\end{array}\]

\subsection{Substitutions as endomorphisms of $F_2$}\label{subsec:invert}
Naturally, any word substitution on $\A$ gives rise to an endomorphism
$\sigma$ of $F_2$, the free group on two letters. (Note that not
every endomorphism of $F_2$ gives rise to a proper word substitution,
consider for instance $\sigma: \, a \to ab^{-1}, \, b \to b^{-1}$.) If
this endomorphism $\sigma$ happens to be an automorphism, that is,
$\sigma \in$ Aut($F_2$), then $\sigma$ is said to be {\em invertible}.  
It is well-known that the set of invertible word substitutions on a
two-letter alphabet is a  finitely generated monoid, with one set of generators  being
\begin{equation}\label{eq:generators}
E\colon a  \mapsto b, \ b \mapsto a, \   L\colon a \mapsto a,   \ b
 \mapsto ab, \   \tilde{L} \colon a\mapsto a,  \  b \mapsto ba.
 \end{equation}
For references, see  \cite{wen} and Chap.\ 2  in  \cite{Lot2}.

In the sequel, we want to consider whether two substitutions generate
the same sequences, that is, the same hull. 
Recall that an automorphism $\gamma$ is called {\em inner}
automorphism, if  there   exists  $w \in F_2$ such  that  $\gamma(x) = w x w^{-1}$  for every $x  \in F_2$. We let 
$\gamma_w$  denote this inner automorphism.

\begin{defi} \label{eq:defconj} 
We say that two given substitutions  $\sigma$ and $\rho$
are {\it conjugate}, if $\sigma = \gamma_w \circ \rho$
for some $w \in F_2$. In this case, we will write $\sigma \sim \rho$.
\end{defi}
\begin{rem}
We use here the term `conjugate' but  a more precise   terminology would be the following:
  $\sigma$ and $\rho$  belong to the same outer class, or else, 
$\rho$ is obtained  from $\sigma$ by   action  of   an  inner automorphism (also called conjugation).
For convenience, we take the freedom to use the short
version.   Note that  if   $\sigma$ and $\rho$
are conjugate, then 
   $w$ is a suffix or a prefix of $\rho(x)$   for $x =a,b$, and $\sigma(x)$  has the same length as the word 
   $\rho(x)$ for   every $x \in  \A$. 
\end{rem}



Recall the following theorem of Nielsen \cite{Nielsen}: given two
automorphisms $\sigma$ and $\rho$ of the free group $F_2$, they
have the same   substitution matrix  if and only if they are
conjugate.  (Here, the substitution matrix relies   on the  Abelianisation map    
from  $F_2$ to $\Z^2$ 
 that counts the number of occurrences of each `positive' letter   minus 
 the number of occurrences of each `negative' letter.)
 We  thus conclude in   terms of  substitutions (see
\cite{Seebold98}  for a combinatorial  proof): 

\begin{thm}\label{thm:sturmconjug}  Two invertible substitutions
  $\sigma$ and $\rho$ are conjugate if and only 
  if they have the same  substitution  matrix. 
\end{thm}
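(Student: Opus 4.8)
The plan is to derive the statement from the version of Nielsen's theorem recalled just above, once the trivial implication and one bookkeeping point about Abelianisations are out of the way.

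First I would treat the direction ``$\sigma \sim \rho \Rightarrow M_\sigma = M_\rho$'', which needs no invertibility: applying the Abelianisation to $\sigma = \gamma_w \circ \rho$ and using multiplicativity of $\phi \mapsto M_\phi$ (the commutative square of Section~\ref{subsec:basic}, extended from $\A^\ast$ to $F_2$) gives $M_\sigma = M_{\gamma_w} M_\rho$; since $\gamma_w(x) = w x w^{-1}$ is sent to $x$ in $F_2^{\mathrm{ab}} \cong \Z^2$, one has $M_{\gamma_w} = \id$ and hence $M_\sigma = M_\rho$.

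For the converse I would argue as follows. Assume $M_\sigma = M_\rho$. Since $\sigma$ and $\rho$ are invertible they belong to $\mathrm{Aut}(F_2)$, and so does $\rho^{-1}$; set $\tau := \sigma \circ \rho^{-1} \in \mathrm{Aut}(F_2)$. Then $M_\tau = M_\sigma M_\rho^{-1} = \id$, so $\tau$ and the identity automorphism have the same substitution matrix. Nielsen's theorem then gives that $\tau$ and $\id$ are conjugate, i.e.\ that $\tau$ is an inner automorphism, $\tau = \gamma_w$ for some $w \in F_2$. Composing on the right with $\rho$ yields $\sigma = \gamma_w \circ \rho$, that is, $\sigma \sim \rho$ in the sense of Definition~\ref{eq:defconj}.

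The one point that needs care — and essentially the only obstacle — is that the ``substitution matrix'' in Nielsen's theorem is the Abelianisation $F_2 \to \Z^2$ counting positive minus negative occurrences of letters, whereas $M_\sigma$ in Definition~\ref{def:prim} counts occurrences in a word of $\A^\ast$. For a non-erasing word substitution no inverse letter occurs in $\sigma(a)$ or $\sigma(b)$, so the two Abelianisations of $\sigma$ agree; this is exactly where I would use that $\sigma$ and $\rho$ are genuine word substitutions rather than arbitrary automorphisms (which may involve cancellation). The auxiliary $\rho^{-1}$ may fail to be a word substitution, but this is harmless since it enters only as an element of $\mathrm{Aut}(F_2)$. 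As a self-contained alternative — which I would mention but not carry out — one can follow the combinatorial argument of \cite{Seebold98}: write $\sigma$ and $\rho$ as products of the monoid generators $E, L, \tilde L$ of \eqref{eq:generators}, and induct on $|\sigma(a)|+|\sigma(b)|$, peeling off one generator at a time; the substitution matrix dictates which generator can be removed, the only ambiguity being that $L$ and $\tilde L$ share the same matrix and are themselves conjugate, so that the induction closes with $\sigma$ and $\rho$ differing by an inner automorphism.
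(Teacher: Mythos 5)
Your proposal is correct and follows essentially the same route as the paper, which deduces Theorem~\ref{thm:sturmconjug} directly from Nielsen's theorem (same Abelianisation matrix $\Leftrightarrow$ same outer class in $\mathrm{Aut}(F_2)$); your reduction via $\tau=\sigma\circ\rho^{-1}$ and your remark that the $F_2$-Abelianisation agrees with $M_\sigma$ for non-erasing word substitutions simply make explicit what the paper leaves implicit.
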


\subsection{A rigidity result}  \label{sec:sturm}

The following theorem is a classical rigidity result
for two-letter  primitive substitutions   by P.~S\'e\'ebold
\cite{Seebold98}, see also  \cite{Krieger}. Here we provide two
alternative proofs. Let us note that the apparent simplicity  of the first 
proof  relies on the use  of  Theorem \ref{thm:sturmconjug} and the defect 
theorem (Theorem 1.2.5 in  \cite{Loth1}),  whereas the original proof  
of  \cite{Seebold98} was self-contained.

\begin{thm}  \label{thm:rigidity}
Let $\sigma, \rho$ be primitive substitutions on the two-letter  alphabet 
$\A$. If $\sigma^k \sim \rho^m$ for some $k, m$,
then $\Xc_{\sigma} = \Xc_{\rho}$.\\
Furthermore, if $\sigma$ and $\rho$ are invertible, then $\Xc_{\sigma}
= \Xc_{\rho}$  if and only if $\sigma^k \sim \rho^m$ for some $k, m$. 
\end{thm}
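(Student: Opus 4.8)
The plan is to treat the two implications separately. For the first (unconditional) statement -- which also gives the ``if'' part of the second -- note that by Proposition~\ref{prop:ssn} one has $\Xc_{\sigma^k}=\Xc_\sigma$ and $\Xc_{\rho^m}=\Xc_\rho$, so it suffices to show: if $\tau$ and $\tau'$ are conjugate primitive substitutions, then $\Xc_\tau=\Xc_{\tau'}$. Write $\tau=\gamma_w\circ\tau'$. By the remark following Definition~\ref{eq:defconj}, the same $w$ is a common prefix, or a common suffix, of $\tau'(a)$ and $\tau'(b)$; a short computation then shows that $\tau(x)$ is a cyclic conjugate of $\tau'(x)$, by one and the same rotation for $x=a$ and $x=b$. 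An induction on $n$, using $\tau^n=\gamma_{w_n}\circ{\tau'}^{\,n}$ with $w_n=w\,\tau'(w)\cdots{\tau'}^{\,n-1}(w)$, upgrades this to: ${\tau}^{\,n}(x)$ is a cyclic conjugate of ${\tau'}^{\,n}(x)$ for every $n$ and every $x$. Hence every factor of $\tau^n(x)$ is a factor of ${\tau'}^{\,n}(x)\,{\tau'}^{\,n}(x)={\tau'}^{\,n}(xx)$, and since $xx$ occurs in ${\tau'}^{\,N}(y)$ for some large $N$ by primitivity, every factor of $\tau^n(x)$ is a factor of some ${\tau'}^{\,N}(y)$. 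This gives $\Xc_\tau\subseteq\Xc_{\tau'}$, and by symmetry equality. (Invertibility is not used here.)

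For the converse, assume $\sigma$ and $\rho$ are invertible and $\Xc:=\Xc_\sigma=\Xc_\rho$. The key point is that the hull determines the letter frequencies: as recalled above, the normed frequency vector $(1-\alpha,\alpha)$ of the bi-infinite words in $\Xc$ is the normed Perron--Frobenius eigenvector of $M_\sigma$, and equally of $M_\rho$; so $M_\sigma$ and $M_\rho$ share the eigenvector $(1-\alpha,\alpha)$. Since $\sigma,\rho$ are automorphisms of $F_2$, we have $M_\sigma,M_\rho\in GL_2(\Z)$; by Theorem~\ref{thm:intro} the hull $\Xc$ is Sturmian, hence aperiodic, so $\alpha$ is irrational, and being a coordinate of an eigenvector of an integral $2\times2$ matrix it is a quadratic irrational. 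Applying the nontrivial Galois automorphism of $\Q(\alpha)$ to the eigenvalue equations (the matrix entries, being integers, are fixed) shows that the Galois-conjugate direction $(1-\bar\alpha,\bar\alpha)$ is also a common eigenvector of $M_\sigma$ and $M_\rho$. Thus both matrices are diagonalised by the same basis $\{(1-\alpha,\alpha),(1-\bar\alpha,\bar\alpha)\}$; in particular they commute.

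It remains to pass from a common eigenbasis to a common power. The inflation factors $\lambda_\sigma,\lambda_\rho$ are units of a real quadratic order, both greater than $1$, hence positive powers $\varepsilon_0^{a}$, $\varepsilon_0^{b}$ ($a,b\ge1$) of the fundamental unit $\varepsilon_0>1$; therefore $\lambda_\sigma^{\,b}=\lambda_\rho^{\,a}$. Applying the Galois automorphism gives the same relation for the conjugate eigenvalues, so $M_\sigma^{\,b}$ and $M_\rho^{\,a}$ are represented by the same diagonal matrix in the common eigenbasis, i.e.\ $M_{\sigma^b}=M_\sigma^{\,b}=M_\rho^{\,a}=M_{\rho^a}$. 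As $\sigma^b$ and $\rho^a$ are again invertible, Theorem~\ref{thm:sturmconjug} yields $\sigma^b\sim\rho^a$, which is the claim with $k=b$, $m=a$.

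The main obstacle is exactly this last step: producing a genuine multiplicative relation $\lambda_\sigma^{\,k}=\lambda_\rho^{\,m}$ between the two inflation factors -- equivalently, the arithmetic fact that two elements of $GL_2(\Z)$ sharing an irrational eigendirection lie in a rank-one group and hence have a common power. Some input beyond pure word combinatorics seems unavoidable here; in place of the unit-group argument the same conclusion can be reached by applying the defect theorem (Theorem~1.2.5 in \cite{Loth1}) to the relations among $\sigma^k(a),\sigma^k(b),\rho^m(a),\rho^m(b)$ that are forced by their all being compatible with a single bi-infinite word of $\Xc$, which is presumably the route in which Theorems~\ref{thm:sturmconjug} and the defect theorem enter as advertised.
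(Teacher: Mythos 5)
Your second half (the converse for invertible substitutions) is essentially the paper's own argument: the common frequency eigenvector, Galois conjugation to obtain a second common eigenvector, the inflation factors as positive powers of a common fundamental unit so that suitable powers of the two matrices coincide, and Theorem \ref{thm:sturmconjug} to conclude conjugacy. That part is correct. (Your closing speculation is misplaced, though: in the paper the defect theorem enters the proof of the \emph{first} implication, not of the converse, and no further combinatorial input is needed beyond the unit-group argument you already gave.)

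The first implication, however, contains a genuine gap. Your reduction to cyclic conjugacy is sound and is in substance the same as the paper's Equation \eqref{eq:ukvk} (which the paper derives from the commutation relation $w^{(k)}\rho^k(x)=\sigma^k(x)w^{(k)}$ via the defect theorem): one arrives at $\tau^n(x)=v_nu_n$ and ${\tau'}^{\,n}(x)=u_nv_n$. But your final step --- every factor of $v_nu_n$ is a factor of $u_nv_nu_nv_n={\tau'}^{\,n}(xx)$, and ``$xx$ occurs in ${\tau'}^{\,N}(y)$ for some large $N$ by primitivity'' --- fails. Primitivity guarantees that the letter $x$ occurs in ${\tau'}^{\,N}(y)$, not that the square $xx$ does: for the Fibonacci substitution $a\mapsto ab$, $b\mapsto a$ the word $bb$ is not a factor of any $\sigma^n(a)$ or $\sigma^n(b)$, so ${\tau'}^{\,n}(xx)$ need not be a legal word and its factors need not lie in $\Xc_{\tau'}$. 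The paper closes this step differently: a fixed factor $v$ of a word in $\Xc_\tau$ occurs in $\tau^k(a)$ for all sufficiently large $k$, with bounded gaps by primitivity, and since $\max(|u_k|,|v_k|)\to\infty$ some occurrence of $v$ eventually lies entirely inside $u_k$ or inside $v_k$, hence inside ${\tau'}^{\,k}(a)=u_kv_k$. You need this (or an equivalent) argument; the $xx$ shortcut does not work.
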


In plain words, this theorem states that two invertible substitutions are 
conjugate  (up to powers) if and only if their hulls are
equal. In even different words (compare Proposition \ref{uinxc}): if
$u$ is a bi-infinite word obtained by some invertible primitive substitution on two
letters, where the substitution is unknown, then $u$ determines the
substitution uniquely, up to  conjugation  and up to powers of the
inflation factor. An immediate consequence is the following result.

\begin{cor} \label{cor:xsxr}
Let $\sigma, \rho$ be primitive invertible substitutions on the
alphabet $\A = \{ a,b \}$ with the same  inflation factor. Then, 
$\sigma \sim \rho$  if  and only if $\Xc_{\sigma} = \Xc_{\rho}$, which is also equivalent to  $\sigma$ and $\rho$ having the same  frequency  $\alpha$. \hfill $\square$
\end{cor}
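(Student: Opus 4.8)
The plan is to derive Corollary \ref{cor:xsxr} from Theorem \ref{thm:rigidity} and Theorem \ref{thm:sturmconjug}, together with the Perron--Frobenius description of the frequency. Throughout, $\sigma$ and $\rho$ are primitive invertible substitutions on $\A=\{a,b\}$ with the same inflation factor $\lambda$.

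First I would establish the equivalence $\sigma \sim \rho \iff \Xc_\sigma = \Xc_\rho$. The forward direction is immediate: if $\sigma \sim \rho$ then in particular $\sigma^1 \sim \rho^1$, so Theorem \ref{thm:rigidity} gives $\Xc_\sigma = \Xc_\rho$. For the converse, assume $\Xc_\sigma = \Xc_\rho$. By the second part of Theorem \ref{thm:rigidity}, there are $k, m \ge 1$ with $\sigma^k \sim \rho^m$. Conjugate substitutions have the same substitution matrix (this is noted in the remark after Definition \ref{eq:defconj}, since conjugation preserves lengths and abelianisations), so $M_\sigma^k = M_{\sigma^k} = M_{\rho^m} = M_\rho^m$. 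Now I would compare dominant eigenvalues: the Perron eigenvalue of $M_\sigma^k$ is $\lambda^k$ and that of $M_\rho^m$ is $\lambda^m$ (same $\lambda$ by hypothesis), hence $\lambda^k = \lambda^m$, and since $\lambda > 1$ this forces $k = m$. Thus $M_\sigma^k = M_\rho^k$. The remaining step is to pass from equality of $k$-th powers of the matrices to equality of the matrices themselves; since $M_\sigma$ and $M_\rho$ are $2\times 2$ primitive (hence diagonalisable with distinct real eigenvalues $\lambda > |\lambda'|$) nonnegative integer matrices, and $k$-th power is injective on such matrices, we get $M_\sigma = M_\rho$. Then Theorem \ref{thm:sturmconjug} applied to the invertible substitutions $\sigma, \rho$ yields $\sigma \sim \rho$.

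Next I would handle the equivalence with having the same frequency $\alpha$. If $\sigma \sim \rho$, then as above $M_\sigma = M_\rho$, so they have the same normed Perron eigenvector $(1-\alpha, \alpha)$, i.e.\ the same frequency. Conversely, suppose $\sigma$ and $\rho$ have the same frequency $\alpha$ and the same inflation factor $\lambda$. Then the vector $(1-\alpha, \alpha)$ is the Perron eigenvector of both $M_\sigma$ and $M_\rho$ for the same eigenvalue $\lambda$. For a $2\times 2$ nonnegative integer matrix, knowing one eigenvalue $\lambda$ and the trace determines the other eigenvalue, and knowing the full eigendata (a $\lambda$-eigenvector and a $\lambda'$-eigenvector) determines the matrix; here the second eigenvector can be recovered because $M_\sigma$ preserves $\Z^2$ and has $\det = \pm 1$, which pins down the second eigenvalue as $\pm 1/\lambda'$-type data. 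More cleanly: two unimodular $2\times 2$ matrices with the same dominant eigenvalue $\lambda$ and the same dominant eigenline must have the same trace ($\lambda + \det/\lambda$ with $\det \in \{1,-1\}$) — and one checks the determinant sign is also forced by unimodularity together with the eigenvector lying in the open positive quadrant. Hence $M_\sigma = M_\rho$, and Theorem \ref{thm:sturmconjug} again gives $\sigma \sim \rho$.

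The main obstacle I anticipate is the matrix-recovery arguments: both the injectivity of $M \mapsto M^k$ on $2\times 2$ primitive nonnegative integer matrices (needed when $k = m$ but the matrices are not yet known equal) and the claim that a unimodular $2\times 2$ matrix is determined by its dominant eigenvalue and dominant eigenvector. Both are elementary linear algebra — diagonalise, compare entries — but they do require using primitivity (to guarantee a real dominant eigenvalue of multiplicity one, hence diagonalisability over $\R$) and unimodularity (to constrain the determinant to $\pm 1$, removing the remaining degree of freedom); one should be careful that the sign of the determinant is actually pinned down, which it is, because a unimodular nonnegative integer matrix with an all-positive eigenvector and $\lambda > 1$ cannot have a negative determinant without the second eigenvalue being $-1/\lambda \in (-1,0)$, a case one must either exclude or carry along consistently for both $\sigma$ and $\rho$. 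Everything else is a direct chaining of the already-proven Theorems \ref{thm:sturmconjug} and \ref{thm:rigidity}.
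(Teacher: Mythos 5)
Your overall route is the intended one: the paper offers no separate proof of Corollary \ref{cor:xsxr} (it is marked as an immediate consequence), and the natural fleshing-out is exactly your chain through Theorem \ref{thm:rigidity} and Theorem \ref{thm:sturmconjug}, reducing everything to showing $M_\sigma = M_\rho$. The forward directions and the reduction $\Xc_\sigma=\Xc_\rho\Rightarrow\sigma^k\sim\rho^m\Rightarrow M_\sigma^k=M_\rho^k$ (with $k=m$ forced by $\lambda>1$) are fine.

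However, two of the linear-algebra lemmas you invoke are false as stated, and both need to be repaired by the same Galois-conjugation argument that the paper itself uses in the proof of Theorem \ref{thm:rigidity}. First, $M\mapsto M^k$ is \emph{not} injective on $2\times 2$ primitive nonnegative integer matrices: $\bigl(\begin{smallmatrix}2&1\\1&2\end{smallmatrix}\bigr)$ and $\bigl(\begin{smallmatrix}1&2\\2&1\end{smallmatrix}\bigr)$ are both primitive, both have $\lambda>|\lambda'|$, and have equal squares. What saves your step is not primitivity and nonnegativity but unimodularity: for a primitive unimodular integer matrix the Perron eigenvalue $\lambda$ is a quadratic irrational, so the second eigenvalue is the algebraic conjugate $\lambda'$, hence is the \emph{same} for $M_\sigma$ and $M_\rho$; since both matrices also share the eigenvectors of their common $k$-th power, they coincide. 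Second, in the frequency direction, ``same trace, same determinant, same dominant eigenline'' does \emph{not} determine a real $2\times 2$ matrix --- there is a one-parameter family of matrices with prescribed eigenvalues and prescribed $\lambda$-eigenline, distinguished by the $\lambda'$-eigenline. You must also recover the second eigenvector, which again comes from integrality: conjugating $M_\sigma(1-\alpha,\alpha)^T=\lambda(1-\alpha,\alpha)^T$ gives $M_\sigma(1-\alpha',\alpha')^T=\lambda'(1-\alpha',\alpha')^T$, and these two eigenvectors are linearly independent by Perron--Frobenius, so they pin down the matrix. Relatedly, your claim that the determinant sign is ``forced by unimodularity together with the eigenvector lying in the open positive quadrant'' is not correct --- positivity of the eigenvector says nothing about the sign of $\det$ --- but the sign \emph{is} forced under the corollary's hypotheses because $\det M_\sigma=\lambda\lambda'$ is the field norm of $\lambda$ and therefore depends only on the common inflation factor. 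With these repairs (all already implicit in the paper's proof of Theorem \ref{thm:rigidity}), your argument goes through.
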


\begin{proof}  Let us prove Theorem \ref{thm:rigidity}.
Because of Proposition \ref{prop:ssn} we can restrict ourselves to the
case $k=m=1$. First we prove  that  if $\sigma \sim \rho$,  then
$\Xc_{\sigma}=\Xc_{\rho}$. Thus let $\sigma \sim \rho$. This
means there is $w \in  F_2$ such that $\sigma(x) = w \varrho(x) w^{-1}$ for $x
= a,b$. Consequently, for $x \in \A$ holds: 
$w \rho(x) =  \sigma(x) w$, and in general for  $u \in \A ^{\ast}$,  $w \rho(u) =  \sigma(u) w$, which yields
for  $x \in \A$,
\begin{equation} \label{eq:wrsw}
 w \varrho(w) \cdots \varrho^{k-1}(w) \rho^k(x) = 
\sigma^k(x) w   \rho(w) \cdots \rho^{k-1}(w). 
\end{equation} 
Setting $w^{(k)} : = w \rho(w) \cdots \rho^{k-1}(w)$, we can write shortly 
\[ w^{(k)} \varrho^k (x) = \sigma^k (x) w^{(k)}. \] 
Let $x=a$. By the defect theorem (Theorem 1.2.5 in  \cite{Loth1}) this 
commutation relation implies 
  \begin{equation} \label{eq:ukvk}
\rho^k(a) = u_k v_k, \quad \sigma^k(a) = v_k u_k 
\end{equation}
for some  words $u_k $ and $v_k$, with  $u_k$ being   nonempty. 
Now, let $u \in \Xc_{\sigma}$, and let $v$ be some subword  of  $u$. Then,
by primitivity, $v$ is a  subword of $\sigma^k(a)$ for 
some $k$. Since $\max(|u_k|,|v_k|) \to \infty$ for $k \to \infty$ ($\sigma$ is primitive), $v$ is also
contained in $u_k$ or $v_k$ for $k$ large enough. Thus it is also contained in
$\rho^k(a)$ for some $k$, thus $v$ is a word in
$\Xc_{\rho}$. The same argument holds vice versa, thus $\Xc_{\sigma} =
\Xc_{\rho}$. 

For the other direction, we first note that if $\lambda$ is
an irrational eigenvalue of a $2 \times 2$ integer matrix $M$ 
then it is an algebraic integer, and the algebraic conjugate
$\lambda'$ of $\lambda$ is the second eigenvalue of $M$. If
$v=(1,v_{\lambda})$ is an eigenvector  of $M$ corresponding to $\lambda$, then
$v'=(1,v'_{\lambda})$ (again, $v'_{\lambda}$ denotes the algebraic conjugate of $v_{\lambda}$)
is an eigenvector corresponding to $\lambda'$. 
If the matrix is    furthermore primitive,  both  vectors  are  distinct  by the 
Perron-Frobenius theorem, and thus linearly independent.

Let $\sigma$, $\rho$  be two   primitive and invertible substitutions such that $\Xc_{\sigma}=\Xc_{\rho}$.
Let $u \in \Xc_{\sigma}$.  Let us recall   that  the 
the  vector  of  frequencies  $(1-\alpha, \alpha)$  of letters in $u$
is an  eigenvector of the (up to here unknown)  
substitution matrices  $M_{\sigma}$ and $M_{\rho}$.  Hence the  vectors $ (1,\ell) =  
(1, \frac{\alpha}{1-\alpha})$ and $(1, \ell')$ are  eigenvectors for both matrices. 
We now  consider  the eigenvalues associated with  the  previous eigenvectors. Since $\sigma$ is
invertible,  $M_{\sigma}$ is unimodular. Its (up to here unknown)
inflation factor $\lambda$ is therefore a unit in the underlying
ring of integers   of the form $\Z[\sqrt{k}]$ for some $k \geq 1$. It is well known that the unit group of
$\Z[\sqrt{k}]$ is generated by a {\em fundamental unit} $z$. (This is
a consequence of the fact that there is a fundamental unit for the
solution of the corresponding Pell's equation, or a consequence of
Dirichlet's unit theorem, see for instance \cite{lang} or \cite{neuk}).   
Thus $\lambda$ is a power of the generating element $z$. Let 
$\lambda = z^n$, where $n$ is arbitrary but fixed.  The same  holds for  the   inflation  factor
of $M_{\rho}$ which  also belongs to   $\Z[\sqrt{k}]$,  and  which is thus of the form $z^m$. By algebraic
conjugation we obtain the second eigenvalue, and the second eigenvector  of $M_{\rho}$.

Since   the eigenvectors  $ (1,\ell) $ and $(1, \ell')$  are linearly  independent  eigenvectors,   on which 
the substitution  matrices    of  ${\sigma} ^n $
and ${\rho} ^m $  act in the same way, 
  ${\sigma} ^n $
and ${\rho} ^m $ have thus the same substitution  matrix.
  By the   fact
that all invertible substitutions with the same substitution matrix
are conjugate  (see Theorem  \ref{thm:sturmconjug}), the claim of the
theorem follows.   
\end{proof}

\begin{rem}
One can prove also directly from \eqref{eq:wrsw} that 
$\varrho^k(x)$ and $\sigma^k(x)$ share a common subword, without using 
the defect theorem. This can be done as follows:
We prove Equation \eqref{eq:ukvk} by the following 
argument:

We have to distinguish three cases (compare Figure
\ref{prefsuff}).

{\bf Case 1:} If $|w^{(k)}| = | \varrho^k (a)| =
|\sigma^k(a)|$, then $w^{(k)} = \varrho^k (a) = \sigma^k(a)$ (Figure
\ref{prefsuff}, left). 

{\bf Case 2:} If $|w^{(k)}| < | \varrho^k (a)| =
|\sigma^k(a)|$ (Figure \ref{prefsuff}, centre), then 
$w^{(k)}$ is a prefix of $\varrho^k(a)$ and a suffix of
$\sigma^k(a)$. Moreover, the remaining suffix of $\varrho^k(a)$
overlaps the remaining prefix of $\sigma^k(a)$. Thus we obtain 
again Equation \eqref{eq:ukvk}:
\begin{equation} 
\rho^k(a) = u_k v_k, \quad \sigma^k(a) = v_k u_k 
\end{equation}
for some nonempty words $u_k (= w^{(k)})$ and $v_k$. 

{\bf Case 3:}: If  
$|w^{(k)}| > | \varrho^k (a)| = |\sigma^k(a)|$ (Figure \ref{prefsuff},
right), then $\rho^k(a)$ is a suffix of $w^{(k)}$. Either this suffix
of $w^{(k)}$ overlaps already with $\sigma^k(a)$ (as in the figure),
then \eqref{eq:ukvk} holds for some nonempty words $u_k, v_k$. Or (if
$|w^{(k)}(a)| \ge 2 |\rho^k(a)|$)  $\rho^k(a) \rho^k(a)$ is a
suffix of $w^{(k)}$, and we continue with the shorter words $w^{(k)}$
vs $\sigma^k(a) w^{(k)}$ without the suffix $\rho^k(a)$. After
finitely many steps, we are in one of the first two cases.

\begin{figure}
\epsfig{file=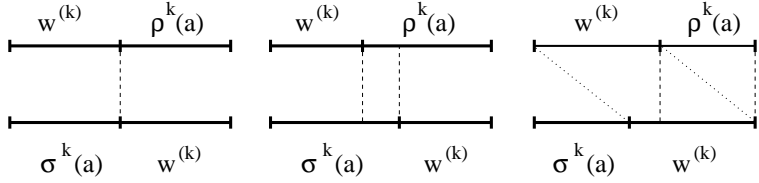,width=100mm}
\caption{Three cases in the proof of Theorem \ref{thm:rigidity}
\label{prefsuff}}
\end{figure}

In either case, \eqref{eq:ukvk} holds for some nonempty word
$u_k$. (In Case 1, just let $v_k$ be the empty word.)
\end{rem}

\begin{rem}
Let us note that  the   assumption  that $\sigma$ is invertible is
crucial in Theorem \ref{thm:rigidity}  as    shown   by the following
example (see   \cite{Krieger}). Consider    on the alphabet
$\{a,b\}$ the following two  primitive substitutions:
\[\sigma : \  a   \mapsto ab, \;   b \mapsto baabbaabbaabba, \
\rho : \ a  \mapsto   abbaab,  \; b \mapsto baabbaabba.\]

One   has $\sigma(ab)=\rho (ab)$ and $\sigma(ba)=\rho(ba)$.    
We deduce  that $\sigma$ and $\rho$ have the  same  fixed  point 
beginning   by $a$, and thus $\Xc_{\sigma}=\Xc_{\rho}$.   
Nevertheless, $\sigma$ and $\rho$  are
neither  conjugate, nor  conjugate up to a power  of
a common substitution (their substitution matrices are neither
conjugate in $GL(2,\Z)$, nor  conjugate up to a power  to a common matrix). 
\end{rem}

\section{Tile-substitutions} \label{sec:tilings}

\subsection{Substitution tilings}  \label{subsec:substil}
In contrast to word substitutions, which act on symbolic objects like
words, tile-substitutions act on geometric objects, like tiles or
tilings. A {\em tiling} of $\R^d$ is a collection of compact sets which
cover  topologically $\R^d$ in a non-overlapping way, that is, the interiors 
of the tiles are pairwise disjoint. In $\R^1$ there is a natural
correspondence between bi-infinite words and tilings when the tiles are  intervals: just assign to
each letter an interval of specified length.

In general, a tile-substitution in $\R^d$ is given by a set of 
{\em prototiles} $T^{}_1, \ldots, T^{}_m \subset \R^d$, an expanding map   
  and a rule how to dissect
each expanded prototile  into isometric copies of some
prototiles $T^{}_i$. Here, we restrict ourselves to two prototiles in
dimension one, and, moreover, our prototiles are always intervals,  with  the expanding map  being  given
 by an  {\em  inflation factor} $\lambda > 1$.
(For the discussion of analogues of some results of the present paper
for the case of more general tilings, see \cite{Rob,So,fre2}). The precise
definition of a tile-substitution in $\R$, where the prototiles are
intervals, goes as follows. 

\begin{defi} \label{defsubst}
A (self-similar) tile-substitution in $\R$ is defined via a set of
intervals $T_1, \ldots T_m$ --- the {\em prototiles} --- and a map
$s$. Let
\begin{equation} \label{eq:eifs}
 \lambda T^{}_j = \bigcup_{i=1}^m T^{}_i + \D^{}_{ij}  \quad (1 \le j \le m), 
\end{equation}
where the union is not overlapping (i.e., the interiors of the tiles
in the union are pairwise disjoint), and each $\D^{}_{ij}$ is a finite
(possibly empty) subset of $\R^d$, called {\em digit set}. Then  
\[ s( T^{}_j ) := \{ T^{}_i + \D^{}_{ij}  \mid  i=1 \ldots m
\} \]
is called a {\em tile-substitution}.  
It is called {\em primitive}  if the substitution matrix $M_s:= (|\D_{ij}|)_{1 \le i,j \le 2}$ is primitive, where 
$|\D_{ij}|$ stands for the   cardinality of  the set  $\D_{ij}$.
\end{defi}
By $s(T^{}_j+x):=s(T^{}_j)+\lambda x$ and $s(\{T,T'\})
:= \{ s(T), s(T') \}$, $s$ extends in a natural way to
all finite or infinite  sets of copies of the prototiles.

In analogy to word substitutions we want to deal with the space
$\X_{s}$ of all substitution tilings arising from a given
tile-substitution. Note the correspondence with the definition of the
hull of a word substitution, see Equation \eqref{eq:hull}. The main difference between both   types of associated  dynamical systems
is  that $\Xc_{s}$ is endowed with a $\mathbb{Z}$-action  by the shift,   whereas 
$\X_{s}$ is  endowed with an $\mathbb{R}$-action  defined by the  action of  translations.

\begin{defi} \label{tilhull}
Let $s$ be a primitive tile-substitution with prototiles $T_1,
T_2$. The {\em tiling space} $\X_{s}$ is the set of all tilings
$T$, such that each finite set of  tiles  of $T$ is contained in some translate
of $s^n(T_1)$ or $s^n(T_2)$.   Any element of  $\X_{s}$ is called  a  {\em substitution tiling}  generated by $s$.
\end{defi} 
\begin{rem} \label{digit}
Any primitive  self-similar tile-substitution is uniquely defined by its digit
set matrix $\D$. This holds because one can derive the inflation factor
$\lambda$ and the prototiles $T_i$ from the digit set matrix
$\D : = \big( \D_{ij} \big)_{ij}$. This is not only true for two tiles in one dimension, but for
any self-similar tile-substitution in $\R^d$. For details, see
\cite{fre2}. Here we just mention two facts: the inflation
factor $\lambda$ is the largest  eigenvalue of the primitive  substitution
matrix $M_s$. And the prototiles are  the
unique compact solution of the multi component IFS (iterated function
system)   in the sense  of \cite{MW} (also called graph-directed  IFS),  which is obtained by dividing \eqref{eq:eifs} by $\lambda$.   
In particular
the vector of lengths of the tiles is  a  {\em  left } eigenvector of the substitution matrix $M_{\sigma}$.
\end{rem}
\begin{rem}\label{rem:wtot}
A one-dimensional tile-substitution (where the tiles are
intervals) yields a  unique word substitution: just replace the
tiles by symbols.
Conversely, one  can  realize  any   primitive word substitution  as  a tile-substitution  by taking as  lengths  $l_a, l_b$ for the prototiles  
 the coordinates of a  positive  left  eigenvector associated with its inflation  factor $\lambda$:  the intervals $T_i$ are chosen
  so that they line up with the action of the word substitution. We chose  here to normalise  the   eigenvector  $v_{\lambda}=(1,\ell_{\lambda})=(l_a,l_b)$ by taking its first coordinate equal to $1$. For $j=a,b$, let 
 $T_j=[0,l_j]$.
For $j=a,b$,  if $\sigma(j)=\sigma(j)_1\cdots  \sigma(j)_{|\sigma(j)|}$ then $\lambda l_j=\sum_{k=1}^ {|\sigma(j)|} l_{\sigma(j)_k}$, i.e.,
the tile $T_j$ is inflated by the factor $\lambda$, so it can be subdivided into translates of the prototiles according to the substitution rule:
\[
\lambda T_j=[0, \lambda l_j] \mapsto \{T_{\sigma(j)_1}, T_{\sigma(j)_2}+l_{\sigma(j)_1},\ldots, T_{\sigma(j)_{|\sigma(j)|}}+l_{\sigma(j)_1}+\cdots +l_{\sigma(j)_{|\sigma(j)|-1}}\}.
\]
This can be written as
\[
\lambda T_j= \bigcup_{i,j\,  : \, (j,k)\in F_i} T_j +\delta([\sigma(j)]_{k-1}),
\]
where 
$$F=\{(j,k) \mid j\in\A, \, 1\leq k\leq |\sigma(j)|\} $$ and 
$$F_i=\{(j,k)\in F \mid  \sigma(j)_k=i\},$$  with  the valuation map 
$\delta  \colon  \A ^{\ast} \rightarrow \mathbb{R}^+$ being defined   for any $w=w_1\cdots w_m \in \A ^{\ast}$ as 
$$\delta (w_1\ldots w_m)=l_{w_1}+\cdots +l_{w_m} = |w|_a  + |w|_b \ell_{\lambda}=\langle A(w), v_{\lambda} \rangle ,$$
by  recalling  that $A$ stands for the Abelianisation map.
 We then set for all $i,j$
 $${\mathcal D}_{ij}=\{\delta([\sigma(j)]_{k-1})\mid (j,k)\in F_i\}.$$

 \end{rem}
 We illustrate this  by the following example.

\begin{example}  \label{ex:fibsq}
Consider the square of the Fibonacci substitution from Equation
\eqref{eq:fib1}, namely, $\rho=\sigma^2: a \to aba, \, b \to ab$.  We work here with the square of the Fibonacci   substitution since the  determinant
of  its substitution matrix  equals
$1$. We 
will realize it as a tile-substitution as follows (see Figure
\ref{fig:fib}): Let $T_1 = [0,1], \, T_2 = [0,1/\tau]$, where $\tau =
\frac{\sqrt{5}+1}{2}$.   Note that   $\lambda=\tau ^2$ is   the  dominant eigenvalue for
$M_{\sigma^2}$, with    $(1, 1/\tau)$  being an  associated      left eigenvector.
Then  
\[ \tau^2 T_1 = [0,2 +1/\tau] = T_1 \; \cup \; (T_2 + 1) \; \cup \;( T_1 +
1+1/\tau); \quad \tau^2 T_2 = [0,1+1/ \tau ] = T_1 \; \cup \; (T_2 +1), \]   
where the unions are disjoint in measure. (Note that $\tau=1+1/\tau$.)  
Hence the last equation yields a tile-substitution $s$:
\[ s(T_1) = \{T_1, T_2  + 1, T_1 + \tau  \}, \quad
s(T_2) = \{ T_1 , T_2 +1 \}.  \] 
For an illustration of this substitution, see Figure \ref{fig:fib}.
This substitution $s$ can be encoded in the digit sets $\D_{1,1} = \{0, 
\tau \}, \D_{2,1} = \{1\}, \D_{1,2} = \{0\}, \D_{2,2} =
\{1 \}$. This can be written conveniently as a digit set matrix:
\begin{equation} \label{digitfib}
\D = \begin{pmatrix} \{0, \tau\} & \{0\} \\ 
\{ 1 \} & \{ 1 \} \end{pmatrix}.  
\end{equation}
By comparison with Definition \ref{def:prim} we note that we can derive
the substitution matrix from the digit set matrix simply as follows: 
$M_{\sigma^2} = \D= (|\D_{ij}|)_{1 \le i,j \le 2}$. In this case we
get the matrix $\big( \begin{smallmatrix} 2 & 1 \\ 1 &
  1 \end{smallmatrix} \big)$. Its dominant eigenvalue is the
inflation factor $\lambda = \tau^2$.
\end{example}

\begin{figure} 
\includegraphics[width=120mm]{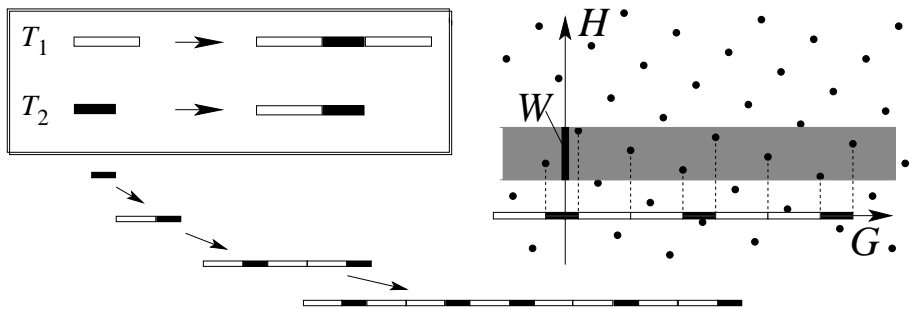}
\caption{The Fibonacci tile-substitution $s$ (box top left), some
  iterates of $s$ on $T_2$, and the generation of a Fibonacci
  tiling as a cut and project tiling (right). The interval $W$ defines
  a horizontal strip, all lattice points within this strip are
  projected down to the line.}
\label{fig:fib}
\end{figure}

\subsection{Cut and project tilings} \label{sec:cps}
In order to  define a notion of duality for  tile-substitutions,  we  now  work in the framework
of  cut and project  sets.

Certain substitution tilings can be obtained by a cut and project
method. There is a large number of results about such cut and project
sets, or model sets, see \cite{moo} and references therein.  In our
setting, this is pretty simple to explain, compare Figure
\ref{fig:fib}. Let $G=H=\R$, let $\pi_1 : G \times H \to G$, $\pi_2 :
G \times H \to H$ be the canonical projections, and let $\Lambda$ be a
lattice in $G \times H = \R^2$, such that $\pi_1: \Lambda \to G$ is
one-to-one, and $\pi_2(\Lambda)$ is dense in $H$. Then, choose some 
compact set $W \subset H$ with $W$  being the closure of its interior, and let 
\[ V = \{ \pi_1(x)  \mid  x \in \Lambda, \; \pi_2(x) \in W \}. \] 
Then $V$ is a {\em cut and project set} (or model set). Since $V$ is a
discrete point set in $\R = G$ ($W$  is   bounded),
 it induces a partition of $\R$ into
intervals. Regarding these (closed) intervals as tiles yields a tiling of
$\R$. Such a tiling is called {\em  cut and project tiling}.  Furthermore, a  tile-substitution $s$ is said to   yield  the   cut and project
set $V$  if   $V$ is the set of  left endpoints of  an  element of  $\X_s$.   According to Theorem  \ref{thm:intro},     a  primitive    word substitution $\sigma$ on two letters  
yields cut and project
tilings  whose window $W$  is an interval   if and only if it is  invertible \cite{lamb}.

Given 	 a   primitive  tile-substitution $s$ which is known   to   yield  a    cut and project set $V$,
one can construct $\Lambda$ and $W$ out of $s$ in a
standard way. In general, $\Lambda$ and $W$ are not unique. The
following construction provides a  choice of $\Lambda$ and $W$ when $s$  comes  from a 
primitive two-letter  substitution, according  to Remark \ref{rem:wtot}. It 
has the advantage that everything can be
expressed in some algebraic number field, which allows the use of
algebraic tools. 
 See  also   \cite{GMP03} for more details   on cut and project schemes
in  the
case of $G=H= \R$.  

We  thus 
start with a primitive   and unimodular word substitution $\sigma$ on a two-letter alphabet  such  that  its  associated    tile-substitution
$s$   yields  the    cut and project set $V$.
 Since  the substitution $M_{\sigma}$ is an integer matrix, its eigenvalues  $\lambda$  and $\lambda'$  are two conjugate
quadratic irrational numbers.   (The case where $\lambda$ is an integer requires
that the internal space $H$ is non-Euclidean \cite{bms,sing}. Since our
substitutions will always be unimodular in the sequel, this case
cannot occur here.) Let $v_{\lambda}=(1, \ell_{\lambda})$ be the  left  eigenvector associated  with the
dominant eigenvalue $\lambda$.  This eigenvector yields the  `natural'  lengths of the prototiles.
Thus let
$T_1 = [0,1]$, $T_2 = [0, \ell_{\lambda}]$. Note that   $1$ and $\ell$ are rationally  independent.
Now, let 
\begin{equation} \label{eq:lattice}
 \Lambda = \langle v, w \rangle_{\Z} = \{ \alpha  \big( \begin{smallmatrix} 1 \\ 1 \end{smallmatrix} \big)+ \beta \big( \begin{smallmatrix} \ell _{\lambda}\\ \ell' _{\lambda}\end{smallmatrix} \big)  \mid \alpha,\beta \in \Z
 \},
\end{equation}
where $\ell'_{\lambda}$ denotes the algebraic conjugate of $\ell_{\lambda}$. 
The  projections $\pi_1$ and $\pi_2$ correspond 
to the   canonical projections.
Now, consider the set $V$ of endpoints of the intervals in a
tiling in $\X_{s}$. Without loss of generality, let one endpoint be $0$. Then all other
endpoints are of the form $\alpha + \beta \ell_{\lambda} \in \Z[\ell_{\lambda}] \subset \Q(\lambda)$.  
Any point $\alpha + \beta \ell_{\lambda} \in V$ has a unique preimage in
$\Lambda$ (since $1$, $\ell_{\lambda}$ are  rationally independent), namely, $ \alpha  \big( \begin{smallmatrix} 1 \\ 1 \end{smallmatrix} \big)+ \beta  \big( \begin{smallmatrix} \ell_{\lambda} \\ \ell'_{\lambda} \end{smallmatrix} \big) $. Thus, each point $\alpha + \beta \ell_{\lambda}$ in $V$ has
a unique counterpart in the internal space $H$, namely $\pi_2 \circ
\pi_1^{-1} (\alpha + \beta \ell_{\lambda}) = \alpha + \beta \ell'_{\lambda}$. The map $\pi_2 \circ
\pi_1^{-1}$ is called {\em star map} and will be abbreviated
 by~$\star$. 

Since $\pi_2(\Lambda)$ is  dense in  $H$ and $W$ is the closure of its interior, the window $W$ is obtained as the closure of $\pi_2 \circ \pi_1^{-1}
(V)$. Note that the fact that $V$ {\em is} a cut and project tiling guarantees
that $W$ is  a   bounded   set,  since being  compact. With the help of the star map we can write
shortly $W = \cl(V^{\star})$. In our context, the star map has a very
simple interpretation: by construction of the lattice
$\Lambda$, the star map is just mapping an element of $\Q(\lambda)$ to
its algebraic conjugate
\begin{equation}\label{eq:star}
(\alpha + \beta \ell_{\lambda})^{\star} = \alpha + \beta \ell'_{\lambda}.
\end{equation}
Nevertheless, in general it is more complicated, and one should keep
in mind that the star map maps $G$ to $H$. In general, $G$ and $H$ can 
be very different from each other, for instance of different dimension. 

Now recall that any self-similar tile-substitution is uniquely defined
by its digit set matrix $\D$ (Remark \ref{digit}).  This allows us to   
define  the star-dual
 of a tile-substitution  by applying the star map to  the transpose $\D^T$ of the matrix  $\D$, as performed
 in \cite{thu,GE,fre2}  where this   `Galois'  duality for tile-substitutions is   developed.

\begin{defi} \label{def:dualtilsubst}
Let $s$ be a  primitive  invertible self-similar tile-substitution yielding cut and project
tilings, with digit set matrix $\D$.
Then the {\em star-dual} substitution $s ^{\star}$  of $s$ is the unique 
tile-substitution defined by $(\D^T)^{\star}$,  with   the star map being  defined in  (\ref{eq:star}).
\end{defi}
Here $X^{\star}$ means the application of the star map to each element
of some set $X \subset \Q(\lambda)$ separately. This definition together with
Definition \ref{tilhull} defines the {\em star-dual} tiling space
$\X_{s^{\star}}$.    

\begin{example}\label{ex:DTstar}
The star-dual of the  tile-substitution associated with the squared Fibonacci substitution in Example
\ref{ex:fibsq} is easily obtained by applying the star map to the
transpose of the digit set matrix in \eqref{digitfib}. We obtain
\[ (\D^T)^\star = \begin{pmatrix} \{0, -\tau^{-1} \} & \{1  \} \\ \{ 0 \} & \{ 1 \} \end{pmatrix}. \]
\end{example}

\subsection{Natural decomposition method and Rauzy fractals} \label{sec:dcm}
We now explain  how to associate  a candidate window $W$ with any  primitive two-letter substitution $\sigma$  in   such a way that
 the  corresponding  tile-substitution $s$   yields a  cut and project set. 
  This candidate  is  the so-called Rauzy fractal associated with    the substitution $\sigma$ that is introduced below.

In this section we follow~\cite{sirw}, originally defined on a
$d$-letter alphabet. We restrict here to the case $d=2$.
As above, let $\sigma$ be a primitive unimodular substitution on the
two-letter alphabet $\A=\{a,b\}$. Let $\lambda>1$ be its inflation
factor and $\lambda'$ the other eigenvalue of the substitution  matrix
$M_{\sigma}$.  Let $u=(u_n)_{n \in \mathbb{N}}$ be an infinite word  such that $\sigma(u)=u$. (Note that we use one-sided infinite words here, in accordance 
with \cite{sirw}. The extension to bi-infinite words is straightforward.) 
Since $\sigma$ is primitive, it suffices to replace $\sigma$  by a suitable  power of $\sigma$ for such a  fixed point word to  exist.

The set $\Xc_{\sigma}$ is mapped into $\R$ via a valuation (compare with the valuation map $\delta$ introduced
in Remark \ref{rem:wtot}), which is a map
$\Delta:\A^*\rightarrow\R$, satisfying $\Delta(vw)=\Delta(v)+\Delta(w)$ and
$\Delta(\sigma(w))=\lambda' \Delta(w)$ for all $v,w\in\A^{*}$. Hence, $\Delta$
satisfies  for  all $w \in  \A^{*}$
\[
\Delta(w)=|w|_a +|w|_b  \ell_{ \lambda'}= 
\langle  A(w), v _{\lambda'}  \rangle,
\]
 where $v_{\lambda'}=(1,\ell_{\lambda'})$ is  the  normalised left eigenvector  associated  with  the eigenvalue $\lambda'$ of the
substitution matrix $M_{\sigma}$. 
According to \cite{Rauzy}, the set 
\[
\RR:=\overline{\{\Delta(u_0 u_1 \ldots u_m) \mid m\geq 0\}}
\]
is called the {\em Rauzy fractal} associated with the one-sided fixed
point $u$ of the substitution $\sigma$.  Since  $\sigma$ is unimodular, $|\lambda' | <1$ and one  deduces that  $\RR$ is a      compact set.
 For more about Rauzy fractals,  see  \cite{fogg}, \cite{BS05} or else \cite{BST} and the references therein.

Let
\[\RR_i:=\overline{\{\Delta(u_0 u_1 \ldots u_m)\mid u_{m+1}=i,\, \,
  m\geq 0\}}\] 
where $i\in\A$. Clearly $\RR=\RR_a\cup\RR_b$. We shall call
$(\RR_a,\RR_b)$ the {\em natural decomposition} of the Rauzy fractal
$\RR$. 
The natural decomposition of $\RR$ is the attractor of a graph
directed IFS (cf.~\cite{MW} and also \cite{HZ}) in the following way, with the notation of Remark \ref{rem:wtot}:
 $(\RR_a,\RR_b)$
satisfies 
\begin{equation}
\label{eqn1}
\RR_i=\bigcup_{(j,k)\in F_i} ( \lambda '\RR_j+\Delta([\sigma(j)]_{k-1}), \, \mbox{ for } \, i\in\A,
\end{equation}
with $F_i$ being defined in  Remark \ref{rem:wtot}.
To prove it, we  use the fact that $\sigma(u)=u$.

Note that the sets $\RR_i$ are the closure of their interior and their  boundary has zero measure, as 
proved in~\cite{sirw} in general case of a $d$-letter substitution.  For more properties, see  e.g.   \cite{fogg,BS05,BST}.
Furthermore,  the sets  $\RR_i$ are not necessarily intervals. In fact it can be proved that they are intervals if and only
if the two-letter primitive substitution $\sigma$  is invertible (see  also Theorem \ref{thm:intro}). For more details, see e.g. \cite{lamb,CAN,beir} for  proofs of this folklore result.

 A moment of thought yields that
$\Delta$ maps the $m$-th letter of $u$, which corresponds to the right
endpoint $|u|_a + |u|_b \ell_{\lambda}$ of the $m$-th tile in the tiling in $G$
(corresponding to the word $u$) to $|u|_a + |u|_b \ell'_{\lambda}$ in $H$. In
other words, $\RR$ is   a right candidate for the  window $W$ for the tiling in $G$, with
$V=\delta( \{u_0 \ldots u_m\mid   m\geq 0\})$ (see Remark \ref{rem:wtot} for the definition of the map $\delta$):
$$\RR= \overline{\pi_2 \circ \pi_1^{-1} \{\delta (u_0 \ldots u_m)\mid   m\geq 0\}},$$
with   $\pi_1$ and $\pi_2$ being  defined in Section \ref{sec:cps}.
In particular $ \pi_2 \circ \pi_1^{-1}  \delta (u_0 \ldots u_m)= \Delta   (u_0 \ldots u_m)$, for any $m \geq 0$.
It  remains to check that
$\{\pi_1(x) \mid  x \in \Lambda,  \  \pi_2(x) \in {\mathfrak R} \} = V$,  i.e., that
we do not have  $  V$ strictly included in $\{\pi_1(x) \mid  x \in \Lambda,  \  \pi_2(x) \in {\mathfrak R} \} $.
This comes from the following result.
\begin{thm}\cite{BD,Host,HS}
Let $\sigma$  be a primitive unimodular substitution on two letters.
Then $\sigma$ yields the  cut and project set  $V=\{\delta (u_0 \ldots u_m)\mid   m\geq 0\}$
with associated window $\RR$. 

\end{thm}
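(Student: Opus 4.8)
The plan is to show that the cut and project set generated by the lattice $\Lambda$ from \eqref{eq:lattice} and the window $\RR$ reproduces exactly the point set $V = \{\delta(u_0\cdots u_m) \mid m \ge 0\}$ arising from the fixed point $u$, with no extra points sneaking in. We already know from the discussion preceding the theorem that $V \subseteq \{\pi_1(x) \mid x \in \Lambda,\ \pi_2(x) \in \RR\}$, since each $\delta(u_0\cdots u_m)$ has star-image $\Delta(u_0\cdots u_m) \in \RR$ by construction. So the entire content of the theorem is the reverse inclusion: we must rule out that some lattice point $x \in \Lambda$ with $\pi_2(x) \in \RR$ projects to a point $\pi_1(x)$ that is not a left endpoint of the corresponding tiling. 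Equivalently, writing everything in $\Q(\lambda)$ via the star map, we must show that if $\gamma = \alpha + \beta\ell_\lambda$ satisfies $\gamma^\star = \alpha + \beta\ell'_\lambda \in \RR$, then $\gamma \in V$.

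First I would set up the graph-directed IFS structure explicitly: by \eqref{eqn1} the pieces $\RR_a, \RR_b$ of the natural decomposition satisfy the self-affine equations driven by $\lambda'$ and the digit translations $\Delta([\sigma(j)]_{k-1})$, and these pieces are the closures of their interiors with boundaries of measure zero (the ``non-overlapping'' property recalled from \cite{sirw}). Dually, the tiling in $G$ satisfies the parallel IFS driven by $\lambda$ and the translations $\delta([\sigma(j)]_{k-1})$ from Remark \ref{rem:wtot}. The key is that $\star$ conjugates the $\lambda$-dynamics to the $\lambda'$-dynamics: $(\lambda\xi + \delta(w))^\star = \lambda'\xi^\star + \Delta(w)$. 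This lets me transport a point of $\RR$ back through finitely many inverse-substitution steps. The standard argument (this is essentially the content of the Host/Bernat–Dekking/Holton–Siegel circle of results) is: the measure of $\RR$ equals the inflation-normalised ``covolume'' contribution, and $V$ already realises a cut and project set with \emph{some} window $W_0 \subseteq \RR$; since $V^\star$ is dense in $\RR$ (it is dense in $W_0$ and $\overline{V^\star} = \RR$ by definition of the Rauzy fractal) and both $W_0$ and $\RR$ are closures of their interiors, a measure/covering-density comparison forces $W_0 = \RR$.

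More concretely, the clean route I would follow uses the repetitivity and the substitutive structure: take any $x \in \Lambda$ with $\pi_2(x) \in \mathrm{int}(\RR)$. Because $\mathrm{int}(\RR) = \mathrm{int}(\RR_a) \sqcup \mathrm{int}(\RR_b)$ up to a null set, $\pi_2(x)$ lies in the interior of some $\RR_i$, hence by \eqref{eqn1} in the interior of one of the subpieces $\lambda'\RR_j + \Delta([\sigma(j)]_{k-1})$; peeling off this digit and dividing by $\lambda'$ (i.e. applying $(\sigma\text{-desubstitution})^\star$) gives a new lattice point in $\mathrm{int}(\RR_j)$, and iterating produces an address in the shift space $\Xc_\sigma$ — which, by recognizability/unique decomposition of primitive substitutions, matches the address of an actual tile in the tiling generated by $u$. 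Undoing the desubstitution on the $G$-side then exhibits $\pi_1(x)$ as a left endpoint in the tiling, so $\pi_1(x) \in V$. Points of $\Lambda$ landing on $\partial\RR$ are handled by a limiting/minimality argument using that $\partial\RR$ has measure zero and $\Xc_\sigma$ is minimal, so they cannot carry positive density and are absorbed. This yields the reverse inclusion and completes the proof.

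The main obstacle is the ``no extra points'' step — precisely the passage from an infinite desubstitution address of a lattice point in $\RR$ to an actual position in the tiling. This requires the recognizability (unique decomposition) of the primitive substitution $\sigma$ together with the fact that the subdivision in \eqref{eqn1} is non-overlapping (interiors disjoint), so that the symbolic address is well-defined and consistent across scales; and it requires care at the boundary $\partial\RR$, where one must invoke that it is a null set and use minimality of $(\Xc_\sigma, S)$ to conclude that boundary lattice points do not enlarge $V$. All of this is exactly what the cited works \cite{BD,Host,HS} establish, so in the paper I would state the theorem, give this reduction to the reverse inclusion, and refer to those sources for the recognizability-based desubstitution argument rather than reproving it.
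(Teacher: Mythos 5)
The paper does not actually prove this statement: it is imported wholesale from \cite{BD,Host,HS}, and the only ``proof content'' in the text is the sentence immediately preceding the theorem, which reduces the claim to the reverse inclusion $\{\pi_1(x)\mid x\in\Lambda,\ \pi_2(x)\in\RR\}\subseteq V$. Your opening reduction therefore matches the paper's framing exactly, and your decision to defer the hard step to the cited sources is consistent with what the authors themselves do.

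That said, your sketch of what those sources establish has a genuine soft spot. You write ``$\mathrm{int}(\RR)=\mathrm{int}(\RR_a)\sqcup\mathrm{int}(\RR_b)$ up to a null set'' as if it followed from the non-overlapping/null-boundary properties quoted from \cite{sirw}, but it does not: what comes for free (by a Perron--Frobenius volume count applied to \eqref{eqn1}) is only the measure-disjointness of the subpieces \emph{within} each $\RR_i$. The measure-disjointness of $\RR_a$ and $\RR_b$ from each other is precisely the coincidence condition for two-letter Pisot substitutions, i.e.\ the main theorem of \cite{BD} (equivalently, pure discreteness in \cite{HS}); it is the crux of the whole statement, not a background fact. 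Relatedly, your density-comparison variant presupposes that $V$ ``already realises a cut and project set with some window $W_0\subseteq\RR$'' --- but no such window is available a priori; that $V$ is a model set at all is the conclusion, so as stated this sub-argument is circular. If you keep the sketch, the honest version is: the desubstitution/address argument works \emph{once} one knows $\RR_a$ and $\RR_b$ are measure-disjoint, and that disjointness is exactly what must be cited from \cite{BD,Host,HS}.
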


\begin{example}\label{ex:squarerauzy}
We consider the square of the Fibonacci substitution studied in Example~\ref{ex:fibsq}: 
$\rho: a\rightarrow aba$, $b\rightarrow ab$. 
Its    inflation factor  is $\lambda=\tau^2$, where $\tau=\frac{1+\sqrt{5}}{2}$. We define the valuation $\Delta$ with respect to the   left eigenvector $(1,\tau'-1)=
(1, \tau'^{-1})$
associated with the eigenvalue $\lambda'(=1/\tau^2).$

The natural decomposition $(\RR_a,\RR_b)$ of its Rauzy fractal $\RR$ is given by the solution of the equation
\[
\begin{array}{ccl}
\RR_a & = & \lambda' \RR_a+\Delta([\sigma(a)]_0) \cup (\lambda' \RR_a +\Delta([\sigma(a)]_2))\cup (\lambda' \RR_b+\Delta([\sigma(b)]_0))\\
\RR_b & = & (\lambda'\RR_a+\Delta([\sigma(a)]_1)) \cup (\lambda' \RR_b+\Delta([\sigma(b)]_1)).
\end{array}
\]
So the previous
equation can be written as 
\[
\begin{array}{ccl}
\RR_a & = & \lambda ' \RR_a \cup (\lambda' \RR_a +\tau')\cup  \lambda ' \RR_b= \lambda ' \RR_a \cup (\lambda' \RR_a -\tau^{-1})\cup  \lambda ' \RR_b\\
\RR_b & = & (\lambda' \RR_a+1) \cup ( \lambda' \RR_b+1) .
\end{array}
\]

The intervals $\RR_a=[-1,\tau^{-1}]$ and $\RR_b=[\tau^{-1},\tau]$
satisfy  this equation. \end{example}

Moreover, the natural decomposition of $\RR$ yields a tile-substitution
in $\R$, see Definition~\ref{defsubst}, in the following way: 
from (\ref{eqn1}) we get
\[
(\lambda')^{-1} \RR_i= \bigcup_{(j,k)\in F_i}
\big( \RR_j+\lambda'^{-1} \Delta([\sigma(j)]_{k-1}) \big), \, \mbox{ for } \, i\in\A.
\]
Note that we assume that $\det(M_{\sigma})=1$, thus $\lambda \lambda'
= 1$, which yields 
\begin{equation}\label{eqn:rauzy-tiling}
\lambda\RR_i =\bigcup_{j\in\A} (\RR_j + {\mathcal E}_{ji}),
\end{equation}
where
\begin{equation}\label{eq:dij}
{\mathcal E}_{ji}:=\{\lambda\Delta([\sigma(j)]_{k-1})\mid  (j,k)\in F_i\}.
\end{equation}
\begin{example}\label{ex:Efib}
We continue with  the square of the Fibonacci substitution studied in Example~\ref{ex:fibsq} and \ref{ex:squarerauzy}: 
$\rho: a\rightarrow aba$, $b\rightarrow ab$. 
The intervals $\RR_a=[-1,\tau^{-1}]$ and $\RR_b=[\tau^{-1},\tau]$
generate a dual tiling in $H=\R$ with the digit set 
\[
{\mathcal E}=\left(\begin{array}{cc}
             \{0,-\tau\} & \{\tau^2\} \\
             \{0\}  & \{\tau^2\}
             \end{array}
      \right).
\] 
\end{example}

The tiling obtained by~(\ref{eqn:rauzy-tiling})
is the dual tiling of the tiling described below, where duality is in
the sense of the cut and project scheme.  Indeed, we consider as in Remark \ref{rem:wtot}  the tile-substitution  $s$ associated with
$\sigma$    with  prototiles the intervals  $T_i$ of length $l_i$, for $i\in\A$,
by recalling that  the vector $v_{\lambda}=(l_a,l_b)$ is the positive left $\lambda$-eigenvector of $M_{\sigma}$ such that $l_a=1$.
Observe that we can associate  with the two-sided fixed point $u=\ldots 
u_{-1} .u_0u_1u_2\ldots$ of the substitution $\sigma$ the tiling
\[
\{\ldots T_{u_{-1}}-l_{u_{-1}}, T_{u_0}, T_{u_1}+l_{u_0}, T_{u_2}+l_{u_0}+l_{u_1},\ldots\}.
\]

According to Section \ref{sec:cps}, we  define the $\star$ map as:
\[\star:\Q(\lambda)\rightarrow \R, \, \, \lambda\mapsto \lambda'.\]
Since $\delta(vw)=\delta(v)+\delta(w)$ and 
$\delta(\sigma(w))=\lambda \delta(w)$, for any word $v,w\in \A^{*}$, 
it follows that $\Delta(w)=(\delta(w))^\star$, for all $w\in\A^*$.
Hence, by (\ref{eq:dij}),   one gets  $$({\mathcal D}_{ij}^{\star})_{ij}^T=\lambda^{-1}({\mathcal E}_{ij})_{ij}.$$

\begin{example}
One checks that  $({\mathcal D}_{ij}^{\star})_{ij}^T=\lambda^{-1}({\mathcal E}_{ij})_{ij}$ for the matrices of Example \ref{ex:DTstar}
and \ref{ex:Efib}.
\end{example}

\section{Dual maps of substitutions}  \label{sec:dual}
In this section, we present  a  notion of   substitution  whose  production rule
is a  formal  translation of   (\ref{eqn:rauzy-tiling}).

\subsection{Generalised  substitutions}  \label{subsec:E1}

We follow here the formalism introduced  in \cite{AI,sai}  defined
originally    on a  $d$-letter alphabet. We    restrict ourselves here
to the case $d=2$.  Let  ${\mathcal A}$ be the  finite  alphabet
$\{a,b\}$.

{\bf Finite  strand}
Let $(e_a,e_b)$ stand  for the canonical basis of ${\mathbb R} ^2$.
One  associates with each finite word $w=w_1w_2\ldots
w_n$ on  the two-letter alphabet ${\mathcal A}$  a path in the
two-dimensional space, starting from 0 and ending in $A(w)$,
with
vertices in $\{A(w_1\ldots w_i) \mid  
i=1\ldots n\}$: we start from 0, advance by $e_i$ if the
first letter
is $i$, and so on. For an illustration, see  Figure  \ref{fig:path} (left).

\begin{figure}
\begin{center}
{\begin{tikzpicture}
\draw[->](-1,0) -- (1.5,0);
\draw[very thick](0,0) -- (0.5,0);
\draw[very thick](0.5,0) -- (1,0);
\draw[very thick](1,0) -- (1,0.5);
\draw[very thick](1,0.5) -- (1.5,0.5);
\draw[->] (0,-1) -- (0,1.5); 
\end{tikzpicture}} 
\hspace{20mm}
{\begin{tikzpicture}
\draw[->](-1,0) -- (1.5,0);
\draw[very thick](0,0) -- (0,0.5);
\draw[very thick](-0.5,0) -- (0,0);
\draw[very thick](-1,0) -- (-0.5,0);
\draw[very thick](-1,0) -- (-1,-0.5);
\draw[->] (0,-1) -- (0,1.5); 
\draw(-1,-1) -- (1.5,1.5);\end{tikzpicture}}
\end{center}
\caption{The path associated with $aaba$ (left), an example of a  
finite strand with coding word $baab$ (right).}
\label{fig:path}
\end{figure}
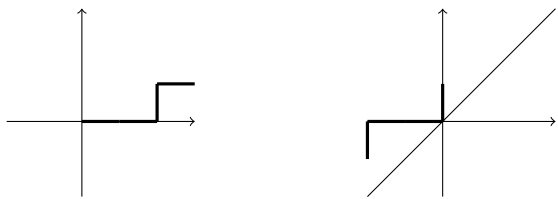

More generally, we define  the notion of  strand  by  following   the
formalism  of \cite{BK}. 
A  {\em finite  strand}   is a subset of  ${\mathbb R}^2$
defined as the image   by  a  piecewise isometric map
$\gamma \colon  [i,j] \rightarrow {\mathbb R} ^2$,  where  $i,j \in
{\mathbb Z}$, 
which  satisfies  the following: for    any integer $k \in [i,j)$,
there is  a letter  $x \in \{a,b\}$  such that 
$\gamma (k+1) -\gamma(k)=e_x$.  If  we replace
$[i,j]$  by ${\mathbb Z}$, we get  the notion   of    {\em  bi-infinite
  strand}.  
A  strand  is    thus a 
connected union of     unit  segments  with integer vertices  which
projects  orthogonally  
in a  one-to-one way
onto  the line $x=y$ (see  Figure \ref{fig:path}).  In particular, 
the path  associated  with a finite word $w$
such as  defined    in the  previous  paragraph  is a  finite  strand.  

We introduce the following notation for {\em elementary  strands}:
for   $W \in  \Z^2$ and  $i\in   {\mathcal A} $,  we set 
$ (W,i)= \{W+\lambda e_i\mid 0 \leq \lambda \leq 1\}.$  

Any   bi-infinite strand  defines   a  bi-infinite   word  $w=(w_k)_{k
  \in {\mathbb Z}} \in \{a,b\}^{\mathbb  Z}$  
that satisfies $\gamma (k+1) -\gamma(k)=e_{w_k}.$
The  corresponding  map  which     sends  bi-infinite strands  on
bi-infinite words   is called  {\em     strand  coding}.

This
allows us to define a map on  strands, coming from the  word substitution,
by mapping
the strand  for $w$ to the
strand  for $\sigma(w)$.
In fact, this map can be  made into  a linear map, in the
following way.
Let $\sigma$  be a  substitution  on  ${\mathcal A}$. Let us recall the notation  for $i \in  \{a,b\}$:
\[\sigma(i)=\sigma(i)_1\cdots \sigma(i)_{|\sigma(i)|}=
\sigma(i)[k-1] \sigma(i)_k \sigma(i)_{k+1}\cdots \sigma(i)_{|\sigma(i)|} .\]

\begin{defi} \cite{AI,sai} We  let  ${\mathcal G}$ denote the real
vector space  generated by elementary strands. 
Let $E_1(\sigma)$ be the linear map defined on ${\mathcal G}$ by: 
\[E_1(\sigma)(W,i)=\sum_{k=1}^{|\sigma(i)|}(M_{\sigma}.W+A( \sigma(i)[k-1])
,\sigma(i)_k).\]
We call $E_1(\sigma)$ the  {\em one-dimensional extension} of $\sigma$. 
\end{defi}

\begin{defi} \label{def:Xb}
Let $\sigma$ be a primitive  substitution.
The {\em  strand  space}  $\Xb_{\sigma} $  is  the set   of bi-infinite
strands $ \eta $ such that each    finite substrand
$\xi $ of $\eta$ is   a substrand of some $E_1(\sigma)^n(W,x)$, 
for  $ W \in {\mathbb Z}^2$, $n \in {\mathbb N}$   
and $x \in \{a,b\}$.

\end{defi}

\subsection{Dual maps}\label{subsec:dualmaps}

 From now on, we suppose that $\sigma$ is a unimodular
substitution. In the sequel we will assume that $\sigma$ has determinant $+1$.
In view of Proposition \ref{prop:ssn} this is no restriction: if $\sigma$ has
determinant $-1$, we will consider $\sigma^2$ instead.

We want to study the dual map $E_1^*(\sigma)$ of $E_1(\sigma)$,
as a linear map on ${\mathcal G}$.
 We  thus denote by ${\mathcal G}^*$
  the dual space of  ${\mathcal G}$, i.e.,  the space of dual maps with
finite
support (that is, dual maps
that give value 0 to all but a finite number of the vectors of
the
canonical basis).

The space ${\mathcal G}^*$ has a natural basis $(W,i^*)$,  for $i=a,b$,  defined as  the map
that
gives value 1 to   
$(W,i)$ and 0 to all other elements of ${\mathcal G}$. It is possible to give
a
geometric meaning to this dual
space:   for $i=a,b$, we represent the element
$(W,i^{\ast})$ as  the  {\em lower}    unit segment  
perpendicular to the direction $ e_i$ of the unit   square with
lowest
vertex $W$.  By a slight abuse of notation,
$(W,i^{\ast})$   will stand both for the corresponding   dual map  and for  the   segment, i.e.,
$$(W,a^{\ast})=\{W +\lambda e_b\mid 0 \leq  \lambda \leq 1\}, \   (W,b^{\ast})=\{W +\lambda e_a\mid 0 \leq  \lambda \leq 1\}. $$
For an illustration, see  Figure \ref{fig:segmentsstar}.
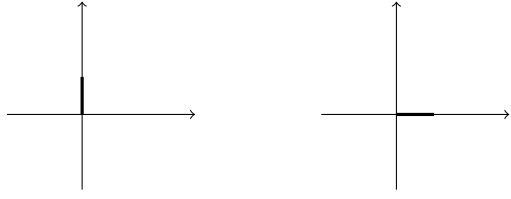
\begin{figure}
\begin{center}
{\begin{tikzpicture}
\draw[->](-1,0) -- (1.5,0);
\draw[very thick](0,0) -- (0,0.5);
\draw[->] (0,-1) -- (0,1.5); \end{tikzpicture}}
\quad \quad \quad \quad 
{\begin{tikzpicture}
\draw[->](-1,0) -- (1.5,0);
\draw[very thick](0,0) -- (0.5,0);
\draw[->] (0,-1) -- (0,1.5);
 \end{tikzpicture}
 }
 \end{center}
 \caption{The  segment $(0,a^{\ast})$ (left) and  the  segment $(0,b^{\ast})$ (right).}
 \label{fig:segmentsstar}
  \end{figure}
  Such a  segment is called  an {\em elementary dual strand}.

The map $E_1(\sigma)$ has a dual map,   which is easily computed:
\begin{thm} \cite{AI} \label{thm:defdual}
Let  $\sigma$ be a  unimodular substitution.  The dual map $E_1^*(\sigma)$
is defined
on ${\mathcal G}^*$ by
\[E_1^*(\sigma)(W,i^*)=\sum_{j,k :\  \sigma(j)[k]=i} \big(
M_{\sigma}^{-1}(W+A(\sigma(j)_{k+1}\cdots \sigma(j)_{|\sigma(j)|}), j^{\ast} \big).\]
Furthermore, if $\tau$ is  also a  unimodular substitution, then
\[E_1^{\ast} (\sigma \circ \tau)=E_1^{\ast}(\tau) \circ E_1^{\ast} (\sigma).\]
\end{thm}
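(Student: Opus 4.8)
The plan is to verify the two assertions of Theorem \ref{thm:defdual} by direct computation, treating them as formal consequences of the definition of $E_1(\sigma)$ together with the pairing between $\mathcal{G}$ and $\mathcal{G}^*$. First I would recall the defining duality: $E_1^*(\sigma)$ is characterised by the requirement that $\langle E_1^*(\sigma)(W,i^*),\, (V,j)\rangle = \langle (W,i^*),\, E_1(\sigma)(V,j)\rangle$ for all basis elements $(W,i^*)$ of $\mathcal{G}^*$ and $(V,j)$ of $\mathcal{G}$, where $\langle (W,i^*),(V,j)\rangle$ equals $1$ if $(W,i)=(V,j)$ (i.e.\ $W=V$ and $i=j$) and $0$ otherwise. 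Since $\sigma$ is unimodular, $M_\sigma \in \mathrm{GL}(2,\Z)$, so $M_\sigma^{-1}$ has integer entries and the formula in the statement does produce an element of $\mathcal{G}^*$ (a finite $\Z$-linear combination of basis dual strands).

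For the formula itself, I would expand $\langle (W,i^*),\, E_1(\sigma)(V,j)\rangle$ using the definition $E_1(\sigma)(V,j) = \sum_{k=1}^{|\sigma(j)|} (M_\sigma V + A(\sigma(j)[k-1]),\, \sigma(j)_k)$. A term in this sum contributes $1$ to the pairing with $(W,i^*)$ exactly when $\sigma(j)_k = i$ and $M_\sigma V + A(\sigma(j)[k-1]) = W$. So the coefficient of $(V,j^*)$ in $E_1^*(\sigma)(W,i^*)$ is the number of pairs $(j,k)$ with $\sigma(j)_k = i$ and $V = M_\sigma^{-1}(W - A(\sigma(j)[k-1]))$. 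Now one uses the identity $A(\sigma(j)[k-1]) = A(\sigma(j)) - A(\sigma(j)_k \sigma(j)_{k+1}\cdots\sigma(j)_{|\sigma(j)|}) = M_\sigma A(j) - A(\sigma(j)_k\cdots\sigma(j)_{|\sigma(j)|})$, which after applying $M_\sigma^{-1}$ and using $M_\sigma^{-1} M_\sigma A(j) = A(j)= e_j$ (a lattice point, harmlessly absorbed into the base vertex by the reindexing conventions of these strand spaces — this is exactly the combinatorial bookkeeping that needs care) rewrites $V$ in the form $M_\sigma^{-1}(W + A(\sigma(j)_{k+1}\cdots\sigma(j)_{|\sigma(j)|}))$ with the summation condition relabelled as $\sigma(j)[k]=i$ (i.e.\ $\sigma(j)_{k+1}$-related indexing, matching the shift between $[k-1]$ and $[k]$). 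Matching this against the claimed formula term by term completes the first part; the only real subtlety is to get the index shift and the "$+A(j)$" vertex translation exactly right so that the stated summation range $\{(j,k) : \sigma(j)[k]=i\}$ and the stated argument of $M_\sigma^{-1}$ agree.

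For the composition rule $E_1^*(\sigma\circ\tau) = E_1^*(\tau)\circ E_1^*(\sigma)$, the clean route is to first establish the covariance $E_1(\sigma\circ\tau) = E_1(\sigma)\circ E_1(\tau)$ for the one-dimensional extensions — this is itself a short check from the definition, using $M_{\sigma\circ\tau} = M_\sigma M_\tau$ and the cocycle identity $A((\sigma\circ\tau)(i)[k-1])$ expressed in terms of $\sigma$ applied to a prefix of $\tau(i)$ — and then dualise. Taking duals reverses the order of composition: $(E_1(\sigma)\circ E_1(\tau))^* = E_1(\tau)^*\circ E_1(\sigma)^*$, which is precisely the asserted identity. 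One should note that $E_1(\sigma)\circ E_1(\tau)$ a priori acts on $\mathcal{G}$ with a genuine (transpose-of-)dual on $\mathcal{G}^*$; finiteness of support is preserved because each $E_1^*$ sends a basis element to a finite combination, so the composite on $\mathcal{G}^*$ is well defined.

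The main obstacle I expect is purely notational rather than conceptual: pinning down the precise vertex-translation conventions in the definitions of $E_1$ and of the geometric dual strands $(W,i^*)$, so that the index shift between the prefix $\sigma(j)[k-1]$ appearing in $E_1$ and the suffix $\sigma(j)_{k+1}\cdots\sigma(j)_{|\sigma(j)|}$ appearing in $E_1^*$ comes out matching the statement, including the relabelling of the summation condition from "$\sigma(j)_k = i$" to "$\sigma(j)[k]=i$". Once the pairing is set up carefully and the identity $A(\sigma(j)) = M_\sigma e_j$ is used to split the prefix contribution into a lattice translation plus the suffix term, everything reduces to comparing finite sums of basis vectors, and there is no analytic or algebraic difficulty left.
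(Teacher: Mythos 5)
The paper offers no proof of this theorem---it is quoted from \cite{AI}, and the only relevant material is the remark after Example \ref{ex:generatorsbis} noting that \cite{AI} states the formula with prefixes and represents $(W,i^{\ast})$ by the \emph{upper} face of the unit square. Your overall strategy (compute the transpose via the pairing; for the composition rule, establish $E_1(\sigma\circ\tau)=E_1(\sigma)\circ E_1(\tau)$ and dualise, which reverses the order) is the right one, and the composition part is sound.

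The gap is precisely at the step you defer to ``combinatorial bookkeeping''. The pairing computation gives the coefficient of $(V,j^{\ast})$ in $E_1^{\ast}(\sigma)(W,i^{\ast})$ as the number of $k$ with $\sigma(j)_k=i$ and $V=M_{\sigma}^{-1}\bigl(W-A(\sigma(j)[k-1])\bigr)$, i.e.\ the \emph{prefix} formula. Using $A(\sigma(j)[k-1])=M_{\sigma}e_j-e_i-A(\sigma(j)_{k+1}\cdots\sigma(j)_{|\sigma(j)|})$ this becomes $V=M_{\sigma}^{-1}\bigl(W+e_i+A(\sigma(j)_{k+1}\cdots\sigma(j)_{|\sigma(j)|})\bigr)-e_j$, which differs from the vertex in the statement by $M_{\sigma}^{-1}e_i-e_j\neq 0$. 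This residue cannot be ``harmlessly absorbed'': the literal transpose and the stated formula are different formal sums in the dual basis. Concretely, for $\rho\colon a\mapsto aba,\ b\mapsto ab$ the literal transpose sends $(0,a^{\ast})$ to $(0,a^{\ast})+(-e_b,a^{\ast})+(0,b^{\ast})$, whereas Example \ref{ex:generatorsbis} (and the stated formula) give $(0,a^{\ast})+(e_b,a^{\ast})+(-e_a+2e_b,b^{\ast})$. The two agree only after conjugating by the translation $U\colon (W,i^{\ast})\mapsto (W+e_i,i^{\ast})$, i.e.\ after switching from the lower-face to the upper-face representation of $(W,i^{\ast})$: then $U^{-1}$ contributes $-e_i$ on the input and $U$ contributes $+e_j$ on the output, exactly cancelling $M_{\sigma}^{-1}e_i-e_j$. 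Making that conjugation explicit (and observing it is compatible with composition, so the second assertion is unaffected) is the missing content of the proof. A small further slip: the summation condition ``$\sigma(j)[k]=i$'' should simply be read as ``the $k$-th letter of $\sigma(j)$ is $i$''; there is no shift of the condition to index $k+1$ as your relabelling suggests.
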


\begin{example}\label{ex:generatorsbis}   We consider  the square of the Fibonacci substitution.
    Let  $  \rho \colon  a \mapsto   aba, b  \mapsto ab$.   One   has 
    $M_{\rho} =
\left(
\begin{array}{cc}
 2 &   1   \\
  1&   1  
\end{array}
\right)
  \mbox{  and }   M_{\rho}^{-1}=
\left(
\begin{array}{cc}
 1 &   -1   \\
  -1&   2   
\end{array}
\right).
$ We thus get 
$$\left\{\begin{array}{ll}E_1 ^* (\rho) (0,a^*)=(0,a ^{\ast})+(e_b, a^{\ast})+(-e_a+2e_b, b^{\ast})\\
   E_1 ^* (\rho)  (0,b^*)=(e_a-e_b,a^*)+(0,b^*).   \end{array}
   \right. $$
   \end{example}

To   be more  precise,  the definition of  the   map  $E_1^{\ast}(\sigma)$   in  \cite{AI}    involves prefixes instead of  suffixes,  
   whereas,
for $i=a,b$, the element
$(W,i^*)$  is represented by    the  {\em upper}  face
perpendicular to the direction $\vec e_i$ of the unit   square with
lowest
vertex $W$.  Nevertheless,   an  easy computation  shows that  both  formulas coincide.

{\bf Dual strands.}
We can  also   define  a notion of strand associated  with   this  dual  formalism.
 A  {\em finite   dual strand}   is a subset of  ${\mathbb R}^2$
defined as the image   by  a  piecewise isometric map
$\gamma \colon  [i,j] \rightarrow {\mathbb R} ^2$,  where  $i,j \in {\mathbb Z}$,
which  satisfies  the following: for    any integer $k \in [i,j)$,  there is  a letter  $x \in \{a,b\}$  such that
\[\gamma (k+1) -\gamma(k)=e_a \text{ if } x=a,  \    \gamma (k+1) -\gamma(k)=-e_b,\text{ otherwise}.\]
Segments $(W,x^*)$,  for $W \in {\mathbb Z}^2, x \in \{a,b\}$,  are    in particular    dual  strands.
   If  we replace
$[i,j]$  by ${\mathbb Z}$, we get  the notion   of    {\em  bi-infinite   dual strand}. 
A   dual strand  is     a 
connected union of     segments  with integer vertices  which
projects  orthogonally  
in a  one-to-one way
onto  the line $x+y=0$ (see  Figure \ref{fig:dualstrand}).
\begin{figure}
\begin{center}
{\begin{tikzpicture}
\draw[->](-1,0) -- (1.5,0);
\draw[very thick](-0.5,1) -- (0,1);
\draw[very thick](0,1) -- (0,0.5);
\draw[very thick](0,0.5) -- (0.5,0.5);
\draw[very thick](0.5,0.5) -- (0.5,0);
\draw[->][very thick](0.5,0) -- (0.5,-0.5);
\draw[->] (0,-1) -- (0,1.5); 
\draw (-1,1) -- (1.5,-1.5);\end{tikzpicture}} \end{center}
 \caption{An example of a  finite dual  strand coded  by the word $babaa$. The arrow at the end of the path  indicates that we read the letters from left to right.}
 \label{fig:dualstrand}
  \end{figure}
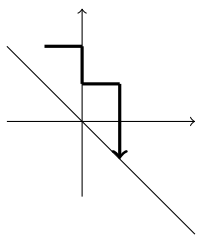

Any    bi-infinite  dual strand  defines   a  bi-infinite   word  $(w_k)_{k \in {\mathbb Z}} \in \{a,b\}^{\mathbb  Z}$ 
that satisfies  for  all $k$ 
$$\gamma (k+1) -\gamma(k)=e_a \mbox{ if } {w_k}=b, \mbox{ and } \gamma (k+1) -\gamma(k)=-e_b   \mbox{ otherwise}.$$  Similarly,
any   finite dual strand   $s$  defines  a    finite word $w$.
The map  $\psi^{\ast}$  that     sends   finite dual strands  on  words  in ${\mathcal A}^{\ast}$
is  called  {\em dual coding}. 
In particular   the word coding  $(W,a^*)$ is the letter  $b$,  and   the word coding $(W,b^*)$  is $ a$.

\subsection{Dual strands}
Before being able to  define the notion of dual strand space in Section   \ref{subsec:dualss},
we  need   to recall several facts on the  behaviour of  $E_1^{\ast} (\sigma)$ on  finite dual strands.

One   bi-infinite  dual strand plays  here a  particular role.
Recall that  $v_{\lambda}$  stands for    a  positive   left eigenvector   of  the substitution  matrix  $M_{\sigma}$ of   the primitive   substitution  $\sigma$ 
associated  with the    inflation factor $\lambda$.
Let  $\alpha$  be  the    frequency of $\sigma$.
We define ${\mathcal S}_{\alpha}$  as  the   union  of   segments $(W,i^*)$, for $i=a,b$, that satisfy
 $$0 \leq \langle W,v_{\lambda}\rangle  < \langle e_i, v_{\lambda} \rangle.$$
One checks that   ${\mathcal S}_{\alpha}$ is a  bi-infinite  dual strand. For more details, see e.g.  \cite{beir}.
One key property  is that    this  bi-infinite  dual    strand is preserved under the action of $E_1  ^{\ast}(\sigma)$.

\begin{thm}\cite{AI}\label{thm:stable}
Let $\sigma$ be a  unimodular   primitive  two-letter substitution with frequency $\alpha$. The 
map $E_1^{\ast} (\sigma)$  maps  any elementary dual   strand  of ${\mathcal S}_{\alpha}$
 on a     finite union of    elementary dual   strands    of ${\mathcal S}_{\alpha}$.
 Furthermore,   if $(V,i^{\ast})$ and $(W,j^{\ast})$  are two  distinct    segments  
 included in  some ${\mathcal S}_{\alpha}$,  for  $\alpha  \in (0,1)$,
then the intersection of   their  images  by $E_1^*(\sigma)$    is either  empty, or reduced to a point.

\end{thm}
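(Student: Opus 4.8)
The plan is to derive both assertions directly from the explicit formula for $E_1^{\ast}(\sigma)$ in Theorem~\ref{thm:defdual}, using only two elementary facts: the identity $\langle A(w),v_{\lambda}\rangle=\delta(w)$ together with $\delta(\sigma(j))=\lambda l_j$ (Remark~\ref{rem:wtot}), and the relation $\langle M_{\sigma}^{-1}x,v_{\lambda}\rangle=\lambda^{-1}\langle x,v_{\lambda}\rangle$ for all $x\in\R^2$, which is merely the transpose of $v_{\lambda}M_{\sigma}=\lambda v_{\lambda}$. Recall that $(W,i^{\ast})\in{\mathcal S}_{\alpha}$ means $0\le\langle W,v_{\lambda}\rangle<\langle e_i,v_{\lambda}\rangle=l_i$.

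First I would check that $E_1^{\ast}(\sigma)$ sends each elementary dual strand of ${\mathcal S}_{\alpha}$ into ${\mathcal S}_{\alpha}$, segment by segment. Fix $(W,i^{\ast})\in{\mathcal S}_{\alpha}$ and let $(W',j^{\ast})$ be an elementary dual strand occurring in $E_1^{\ast}(\sigma)(W,i^{\ast})$; by the formula it comes from an occurrence of $i$ inside $\sigma(j)$, say $\sigma(j)=p\,i\,s$ with $p,s$ the prefix and suffix around that occurrence, and $W'=M_{\sigma}^{-1}(W+A(s))$, where $\delta(p)+l_i+\delta(s)=\delta(\sigma(j))=\lambda l_j$. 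Then $\langle W',v_{\lambda}\rangle=\lambda^{-1}\bigl(\langle W,v_{\lambda}\rangle+\delta(s)\bigr)\ge 0$, and from $\langle W,v_{\lambda}\rangle<l_i\le\delta(p)+l_i$ one gets $\langle W',v_{\lambda}\rangle<\lambda^{-1}\bigl(\delta(p)+l_i+\delta(s)\bigr)=l_j$. Hence $(W',j^{\ast})\in{\mathcal S}_{\alpha}$, and, there being finitely many occurrences, $E_1^{\ast}(\sigma)(W,i^{\ast})$ is a finite union of elementary dual strands of ${\mathcal S}_{\alpha}$.

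The core of the proof is the disjointness statement, and its crucial step is: if $(V,i^{\ast}),(W,j^{\ast})\in{\mathcal S}_{\alpha}$ and their images under $E_1^{\ast}(\sigma)$ share a common elementary dual strand $(V',k^{\ast})$, then $(V,i^{\ast})=(W,j^{\ast})$. Indeed, by the formula the common type $k$ forces both contributing occurrences to lie inside the single word $\sigma(k)$: there are positions $m,m'$ with $\sigma(k)_m=i$, $\sigma(k)_{m'}=j$ and $M_{\sigma}^{-1}(V+A(s_m))=V'=M_{\sigma}^{-1}(W+A(s_{m'}))$, where $s_m,s_{m'}$ denote the corresponding suffixes of $\sigma(k)$; since $M_{\sigma}$ is invertible, $V+A(s_m)=W+A(s_{m'})$. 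If $m=m'$, then $s_m=s_{m'}$, hence $V=W$ and $i=j$, so $(V,i^{\ast})=(W,j^{\ast})$. If $m<m'$, write $s_m=z\,s_{m'}$ with $z=\sigma(k)_{m+1}\cdots\sigma(k)_{m'}$ nonempty and ending in $\sigma(k)_{m'}=j$; then $W=V+A(z)$, hence $\langle W,v_{\lambda}\rangle=\langle V,v_{\lambda}\rangle+\delta(z)\ge l_j$ (using $\langle V,v_{\lambda}\rangle\ge 0$ and $\delta(z)\ge l_j$), contradicting $(W,j^{\ast})\in{\mathcal S}_{\alpha}$. The case $m>m'$ is symmetric and contradicts $(V,i^{\ast})\in{\mathcal S}_{\alpha}$. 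Taking $(V,i^{\ast})=(W,j^{\ast})$ here also shows that the elementary dual strands forming a single image are pairwise distinct.

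It remains to upgrade ``no shared elementary dual strand'' to ``intersection empty or a single point''. For this I would use that $E_1^{\ast}(\sigma)$ maps an elementary dual strand to a \emph{connected} finite dual strand --- the geometric counterpart of the formula in Theorem~\ref{thm:defdual}, which follows from a local analysis at the lattice vertices, or may be quoted from \cite{AI}. Two finite dual strands that are sub-strands of the bi-infinite dual strand ${\mathcal S}_{\alpha}$ and share no elementary dual strand occupy disjoint runs of consecutive segments of ${\mathcal S}_{\alpha}$, so they meet in at most one point, namely a common endpoint; and such a one-point overlap genuinely occurs, as one sees in Example~\ref{ex:generatorsbis}, where the images of $(0,a^{\ast})$ and $(0,b^{\ast})$ meet exactly at the origin. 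The two computations above are routine bookkeeping; the one ingredient calling for real care --- and the main obstacle --- is precisely this connectedness input, which is the only part not already contained in the material recalled above.
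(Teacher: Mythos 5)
The paper offers no proof of this theorem---it is quoted from \cite{AI}---so your attempt stands on its own. Your first two steps are correct and complete: the eigenvector computation $\langle M_{\sigma}^{-1}x,v_{\lambda}\rangle=\lambda^{-1}\langle x,v_{\lambda}\rangle$ showing that every segment of $E_1^{\ast}(\sigma)(W,i^{\ast})$ again satisfies $0\le\langle\cdot\,,v_{\lambda}\rangle<l_j$, and the suffix-comparison argument showing that two distinct segments of ${\mathcal S}_{\alpha}$ never contribute the same elementary dual strand, are exactly the right ingredients. Together they prove that the images are finite unions of segments of ${\mathcal S}_{\alpha}$ whose pairwise intersections are finite sets of points (equivalently, have empty interior).

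The gap is in your last step. The connectedness of $E_1^{\ast}(\sigma)(W,i^{\ast})$ that you invoke to upgrade ``no common segment'' to ``at most one common point'' is not something you prove, and it is in fact false in the stated generality: the theorem assumes only that $\sigma$ is unimodular and primitive, and the paper warns immediately after the statement that images of dual strands under non-invertible substitutions need not be connected. Concretely, take $\sigma\colon a\mapsto aab,\ b\mapsto ba$ (primitive, determinant $1$, not invertible since its language contains both $aa$ and $bb$). Then $E_1^{\ast}(\sigma)(0,a^{\ast})=(e_b,a^{\ast})+(-e_a+2e_b,a^{\ast})+(0,b^{\ast})$ and $E_1^{\ast}(\sigma)(0,b^{\ast})=(0,a^{\ast})+(e_a-e_b,b^{\ast})$: the first image is already disconnected, and the two images meet in the two points $0$ and $e_b$. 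So no argument can deliver the literal ``reduced to a point'' conclusion without an extra hypothesis; what your steps actually establish is the weaker (and, for the rest of the paper, sufficient) statement that the intersection is finite. If one restricts to invertible $\sigma$, the connectedness you need is precisely Proposition \ref{prop:dualstrand}; its proof uses only the first part of the present theorem, so invoking it for the second part would not be circular---but it must then be carried out via the explicit generator computations given there, not dismissed as ``a local analysis at the lattice vertices''.
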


Note that if $\sigma$ is not invertible,  the  image  by  $E_1^{\ast} (\sigma)$
of a  finite dual strand might not be connected.  However, if $\sigma$ is invertible, 
connectedness is preserved:
 finite dual    substrands  of ${\mathcal S}_{\alpha}$ (i.e.,  connected  unions of  segments)  are preserved under 
 $E_1^{\ast} (\sigma)$. Note that  a  proof  different   from the following one can be found in \cite{Ei-Ito}.

\begin{prop}\label{prop:dualstrand}

Let $\sigma$ be     a   primitive invertible  two-letter   substitution.
 The map $E_1 ^{\ast} (\sigma)$  maps    every finite strand   onto  a finite strand.

\end{prop}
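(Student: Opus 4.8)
The plan is to combine Theorem~\ref{thm:stable} (which already tells us that $E_1^\ast(\sigma)$ sends elementary dual strands of $\Sc_\alpha$ to unions of elementary dual strands of $\Sc_\alpha$, with at most point-overlaps) with the invertibility of $\sigma$ in order to upgrade ``finite union'' to ``connected finite strand''. So the first thing I would do is reduce to the generators. By Theorem~\ref{thm:defdual} we have $E_1^\ast(\sigma\circ\tau)=E_1^\ast(\tau)\circ E_1^\ast(\sigma)$, so if I can show that each of the generators $E$, $L$, $\tilde L$ from \eqref{eq:generators} maps finite dual strands to finite dual strands, then composition does the rest (a composition of maps each preserving the property of ``being a finite connected dual strand'' again preserves it). This reduces the problem to a finite, completely explicit check.

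Next I would make the notion of ``finite strand'' precise in the way that makes the induction work: a finite dual strand is, via the dual coding $\psi^\ast$, just a finite word $w\in\A^\ast$ together with a placement of its initial segment (an element $(W,x^\ast)$ of $\Sc_\alpha$). The claim ``$E_1^\ast(\sigma)$ maps every finite strand onto a finite strand'' then unwinds to: for any $(W,x^\ast)$ and $(W',x'^\ast)$ that are consecutive segments of a dual strand in $\Sc_\alpha$, the images $E_1^\ast(\sigma)(W,x^\ast)$ and $E_1^\ast(\sigma)(W',x'^\ast)$ are themselves connected dual strands (this is the content we get ``for free'' from the $E_1(\sigma)$-side being a word substitution, since the image of a single elementary strand is the strand coded by $\sigma$ applied to the appropriate letter-pattern) \emph{and} they abut — the last segment of the first image shares an endpoint with the first segment of the second. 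The first fact is essentially bookkeeping from the formula in Theorem~\ref{thm:defdual}: the set $\{(j,k) : \sigma(j)[k]=i\}$ ordered suitably produces a path. The abutting is where invertibility enters: I would argue that when $\sigma$ is invertible the word read off along $E_1^\ast(\sigma)$ applied to a two-segment piece is exactly $\psi^\ast$ of a dual substitution image, and since an invertible substitution has no ``erasing'' or ``folding'' behaviour on the $\Sc_\alpha$ level, consecutive images meet end-to-end rather than merely at an interior point.

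Concretely, the cleanest route is: (i) handle $E$, which is trivial — $M_E^{-1}=M_E$ swaps $a^\ast\leftrightarrow b^\ast$ and just relabels, so finite strands go to finite strands; (ii) handle $L\colon a\mapsto a,\ b\mapsto ab$: compute $E_1^\ast(L)(W,a^\ast)$ and $E_1^\ast(L)(W,b^\ast)$ from the formula, observe each is a single segment or a connected two-segment piece, and check directly on the four possible consecutive pairs $(x^\ast,x'^\ast)$ occurring in $\Sc_\alpha$ that the images abut; (iii) do the same for $\tilde L$, which is the mirror image of $L$ and needs no new work beyond symmetry; (iv) invoke Theorem~\ref{thm:defdual} and induct on the length of a word representing $\sigma$ over the generators, using at each step that the predecessor map already outputs a finite connected dual strand in some $\Sc_{\alpha'}$ and that the generator maps such strands to finite connected dual strands. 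The induction is clean because Theorem~\ref{thm:stable} guarantees we never leave the family of $\Sc_\alpha$'s.

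The main obstacle I expect is step~(iii)/(ii)'s abutting check — i.e., genuinely verifying that for an invertible $\sigma$ the images of two consecutive elementary dual strands of $\Sc_\alpha$ share an \emph{endpoint} and not merely a point in the (already excluded by Theorem~\ref{thm:stable}) degenerate-overlap sense, or worse, are disjoint, leaving a gap. For a non-invertible $\sigma$ a gap really can appear (as the text warns right before the proposition), so the argument must use invertibility in an essential way; the natural formulation is that $E_1^\ast$ is an anti-homomorphism on the invertible monoid into a monoid of ``strand-to-strand'' maps, and one must identify which algebraic feature of $M_\sigma\in \mathrm{GL}(2,\Z)$ (equivalently, of $\sigma\in\mathrm{Aut}(F_2)$) forces connectivity. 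I would isolate this as a lemma about the two generators and then the rest is formal. $\hfill\square$
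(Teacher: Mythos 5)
Your plan is essentially the paper's own proof: reduce to the generators $E$, $L$, $\tilde L$ via the anti-homomorphism property of $E_1^{\ast}$, verify by explicit computation that each generator sends a pair of adjacent elementary dual strands to an abutting (connected) union, invoke Theorem~\ref{thm:stable} to see that these unions are genuine substrands of some $\Sc_{\alpha}$, and conclude by induction on the length of a decomposition over the generators. The paper carries out exactly the four-case computation per generator that you defer, so the only remaining work in your proposal is that bookkeeping.
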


\begin{proof}We first   check   that  
 the generators
$E,   L,  \tilde{L}$ of the monoid of invertible two-letter  substitutions     (see (\ref{eq:generators})) map   every finite dual   strand made  of two   adjacent segments
to some  finite dual  strand:

$\left \{\begin{array}{ll}
&E_1^* (L)(  (x,a^{\ast})+ (x,b^{\ast}))=(x,a^{\ast})+(x,b^{\ast})+(x-e_a+e_b,b^{\ast})\\
&E_1^* (L)(  (x,a^{\ast})+ (x+e_b,a^{\ast}))=(x,a^{\ast})+(x-e_a+e_b,b^{\ast})+
(x-e_a+e_b,a^{\ast})+(x-2e_a+2e_b,b^{\ast})\\
&E_1^* (L)(  (x,b^{\ast})+ (x+e_a,b^{\ast}))=(x,b^{\ast})+(x+e_a, b^{\ast})\\
&E_1^* (L)(  (x,b^{\ast})+ (x+e_a-e_b,a^{\ast}))=(x,b^{\ast})+(x+e_a,b^{\ast})+(2e_a-e_b,a^{\ast})
\end{array}
\right.$

$\left \{\begin{array}{ll}
&E_1^* (\tilde{L})(  (x,a^{\ast})+ (x,b^{\ast}))=(x,a^{\ast})+(x,b^{\ast})+(x+e_a,b^{\ast})\\
&E_1^* (\tilde{L})(  (x,a^{\ast})+ (x+e_b,a^{\ast}))=(x,a^{\ast})+(x,b^{\ast})+(x-e_a+e_b,a^{\ast})+(x-e_a+e_b,b^{\ast})\\
&E_1^* (\tilde{L})(  (x,b^{\ast})+ (x+e_a,b^{\ast}))=(x+e_a,b^{\ast})+(x+2e_a, b^{\ast})\\
&E_1^* (\tilde{L})(  (x,b^{\ast})+ (x+e_a-e_b,a^{\ast}))= (x+e_a,b^{\ast})+(x+2e_a-e_b,a^{\ast})+
(2e_a-e_b,b^{\ast})
\end{array}
\right.$

$\left \{\begin{array}{ll}
&E_1^* (E)(  (x,a^{\ast})+ (x,b^{\ast}))=(x,a^{\ast})+(x,b^{\ast})\\
&E_1^* (E)(  (x,a^{\ast})+ (x+e_b,a^{\ast}))=(x,b^{\ast})+(x+e_a,b^{\ast})\\
&E_1^* (E)(  (x,b^{\ast})+ (x+e_a,b^{\ast}))=(x,a^{\ast})+(x+e_b, a^{\ast})\\
&E_1^* (E)(  (x,b^{\ast})+ (x+e_a-e_b,a^{\ast}))= (x,a^{\ast})+(x-e_a+e_b,b^{\ast}).
\end{array}
\right.$

Hence     these generators   map   finite dual  strands  to
connected unions  of   unit  segments with integer vertices.  It
remains  to  check  that these    unions    are indeed    dual
strands (i.e.,  that they can be projected  orthogonally   in  a one-to-one way    to  $x+y=0$). By  Theorem \ref{thm:stable}, 
they  all   are substrands of ${\mathcal S}_{\alpha}$. We deduce  that
the generators   map   finite dual  strands  to    finite  dual
strands.

Let us prove now  that invertible substitutions  map    finite dual  strands  to    finite  dual
strands.  Let  $\tau$  be a  two-letter  substitution that maps   every  finite
dual  strand  to    a  finite  dual  one.  Now, if $\sigma= \tau
\circ L $,   then we deduce from    $E_1^{\ast} (\sigma)=E_1^{\ast}  (L)
\circ  E_1^{\ast} (\tau)$, that      the map $E_1 ^{\ast} (\sigma)$
also maps    every finite dual  strand to  a     connected union  of
unit  segments,  and hence by  Theorem \ref{thm:stable},   to  a
finite  dual strand.     The  same  holds  true  for the other
generators.  We thus   conclude  by induction     on the  length  of  a
decomposition on the  generators $E,L, \tilde{L}$. \end{proof}

\subsection{Dual strand space and dual substitution}\label{subsec:dualss}
  
We now can  introduce the notion  of  dual  strand space for  an invertible substitution.
\begin{defi} \label{def:Xbdual}
Let $\sigma$ be a primitive invertible   substitution over a
two-letter  alphabet. The {\em   dual strand  space}
$\Xb_{\sigma}^{\ast} $  is  the set   of bi-infinite  dual  strands $ \eta $ 
such that each    finite substrand
$\xi $ of $\eta$ is   a substrand of some $E_1 ^{\ast}(\sigma)^n(W,x^{\ast})$, 
for  $ W \in {\mathbb Z}^2$, $n \in {\mathbb N}$   
and $x \in \{a,b\}$.

\end{defi}
According to Proposition   \ref{prop:dualstrand}, since  the image 
by  $E_1 ^{\ast}(\sigma)$ of a   finite strand  is   a finite strand,   it can  be coded    as a  substitution
via the  dual coding   $\psi ^{\ast} $ (introduced in Section \ref{subsec:dualmaps}).

\begin{defi}\label{defi:dualsubstitution}
Let $\sigma$ be a primitive invertible  word   substitution over $\{a,b\}$ whose  substitution matrix has determinant $1$.
 The {\em dual substitution}     $\sigma^{\ast}$
is  defined on the alphabet $\{a,b\}$ as  
\[ \sigma^{\ast} (x)=\psi ^{\ast} (E_1^{\ast}(\sigma) (0,x^{\ast}
))\text{ for  }x  =a,b.\] 
\end{defi}

One has   $L^{\ast} \colon a\mapsto ba,\, b \mapsto  b,$
${\tilde L}^{\ast} \colon a\mapsto ab,\, b \mapsto  b$ and
$E^{\ast} \colon a\mapsto b,\, b \mapsto  a.  $
The   substitution    $L^{\ast} $ is   usually denoted as 
$R$. (we will use it in  Appendix  \ref{sec:arithm}).

\begin{example}  \label{ex:dual2}
Let $\rho\colon a \mapsto aba, \ b \mapsto ab$    be the square of the Fibonacci substitution.
One has $\rho^{\ast}\colon  a \mapsto baa, \ b \mapsto ba.$
\end{example}
\begin{rem}\label{rem:sigmastar}
Let us observe that the substitution matrix of $\sigma^*$   is the
transpose of the substitution matrix of $\sigma$.  Indeed,   the dual strand    coded  by $\sigma^{\ast}(0,a^{\ast})$ is located
on the left  of   the dual strand    coded  by $\sigma^{\ast}(0,b^{\ast})$ (we use the fact that $M_{\sigma}$ has determinant $+1$). Furthermore, one  checks that   $\sigma^*$
is invertible (it suffices to check it   on the generators $E, L, \tilde{L}$).
Furthermore,
the   inflation factor   of $\sigma^{\ast}$  is
equal  to  the  inflation factor  $\lambda$   of $\sigma$.
 Furthermore, $\sigma$ and $\tau$ are conjugate if
 and only if $\sigma^{\ast}$ and $\tau^{\ast}$ are conjugate.
\end{rem}
This allows us to  define a   notion of dual    frequency.

\begin{defi}
The dual frequency $\alpha^{\ast}$ of $\sigma$ is defined as 
the frequency of $\sigma ^{\ast}$. 
\end{defi}
The dual  frequency is closely related to the algebraic conjugate of the  frequency. Indeed, one has the following result.

\begin{thm}  \label{thm:conjugue}
Let  $\sigma$ be a  primitive  two-letter   invertible  substitution whose  substitution matrix has determinant $1$. Let
$\alpha$  be its      frequency and $\alpha'$ its algebraic conjugate.
One has 
$$\alpha^{\ast}= \frac{\alpha' -1}{2\alpha'-1}$$
\end{thm}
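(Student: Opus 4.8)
The plan is to reduce to the case of the generators and then track how the frequency transforms. First I would recall that the frequency $\alpha$ of $\sigma$ is determined by the normed right Perron--Frobenius eigenvector $(1-\alpha,\alpha)$ of $M_\sigma$, equivalently by the ratio $\ell=\frac{\alpha}{1-\alpha}$, which is the slope of the left eigenvector $v_\lambda=(1,\ell_\lambda)$ up to the identification used in the paper. By Remark \ref{rem:sigmastar} the substitution matrix of $\sigma^\ast$ is $M_\sigma^T$, so the dual frequency $\alpha^\ast$ is read off from the Perron--Frobenius eigenvector of $M_\sigma^T$, i.e.\ from the \emph{left} eigenvector of $M_\sigma$ (for the dominant eigenvalue $\lambda$). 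The key algebraic input is that for a $2\times2$ unimodular matrix with determinant $+1$, the left eigenvector for $\lambda$ and the right eigenvector for $\lambda$ are related through the second eigenvalue $\lambda'$ and algebraic conjugation: if $(1-\alpha,\alpha)$ is the right $\lambda$-eigenvector, then the right $\lambda'$-eigenvector is its algebraic conjugate $(1-\alpha',\alpha')$, and the left $\lambda$-eigenvector is orthogonal to the right $\lambda'$-eigenvector. So I would first establish the formula at the level of matrices: writing $M=\begin{pmatrix}p & q\\ r & s\end{pmatrix}$ with $ps-qr=1$, the right $\lambda$-eigenvector normed to sum $1$ has $\alpha=\frac{r}{\lambda-s+r}$ (or an equivalent expression), and the left $\lambda$-eigenvector normed to sum $1$ gives $\alpha^\ast=\frac{q}{\lambda-s+q}$ — wait, I will instead use the cleaner route via orthogonality.

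Concretely, the main computation is: the left $\lambda$-eigenvector $v_\lambda$ of $M$ is orthogonal to every right eigenvector for the \emph{other} eigenvalue $\lambda'$. The right $\lambda'$-eigenvector of $M$, normed to coordinate-sum $1$, is $(1-\alpha',\alpha')$, the algebraic conjugate of the right $\lambda$-eigenvector $(1-\alpha,\alpha)$. Hence $v_\lambda$ is proportional to $(\alpha', -(1-\alpha'))=(\alpha',\alpha'-1)$. Normalising this to coordinate-sum $1$, the second coordinate becomes $\frac{\alpha'-1}{\alpha'+(\alpha'-1)}=\frac{\alpha'-1}{2\alpha'-1}$. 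Since $\alpha^\ast$, the frequency of $\sigma^\ast$, is by definition the second coordinate of the normed Perron eigenvector of $M_{\sigma^\ast}=M_\sigma^T$, which is exactly the normed left $\lambda$-eigenvector of $M_\sigma$, we get $\alpha^\ast=\frac{\alpha'-1}{2\alpha'-1}$, as claimed. I should check the sign/positivity: since $\sigma$ is unimodular primitive with determinant $+1$ and $\lambda>1$, we have $\lambda'=1/\lambda\in(0,1)$, and one needs $\alpha'\notin\{1/2\}$ and that the resulting value lies in $(0,1)$ — this follows because the Perron--Frobenius eigenvector of $M_\sigma^T$ is automatically positive with positive coordinates summing to $1$, so no separate verification of the range is needed beyond noting $2\alpha'-1\neq 0$.

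The main obstacle, and the only place where care is genuinely required, is bookkeeping of normalisations and making sure ``algebraic conjugate of the frequency'' is the right object: one must confirm that the right $\lambda'$-eigenvector of $M_\sigma$ really is the coordinatewise algebraic conjugate of the right $\lambda$-eigenvector. This is exactly the elementary fact already recorded in the proof of Theorem \ref{thm:rigidity} (``if $\lambda$ is an irrational eigenvalue of a $2\times2$ integer matrix \dots\ then $\lambda'$ is the second eigenvalue, and the conjugate eigenvector is the eigenvector for $\lambda'$''), so I can cite it directly. A secondary, purely cosmetic, subtlety is the determinant hypothesis: it guarantees $M_\sigma$ and $M_\sigma^T$ share the same dominant eigenvalue $\lambda$ and that $\lambda\lambda'=1$, which is used implicitly via Remark \ref{rem:sigmastar} (the dual substitution is only defined under $\det M_\sigma=+1$). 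I would also, as a sanity check, verify the formula on the generators $L,\tilde L,E$ — e.g.\ for $R=L^\ast$ one gets $\alpha^\ast=1$ at the boundary, consistent with $\alpha'=1$ — but this is just confirmation, not part of the argument.
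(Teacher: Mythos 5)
Your proposal is correct and follows essentially the same route as the paper's own proof: both use Remark \ref{rem:sigmastar} to identify $M_{\sigma^{\ast}}=M_{\sigma}^{T}$, then exploit the orthogonality of the Perron eigenvector of $M_{\sigma}^{T}$ (equivalently, the left $\lambda$-eigenvector of $M_{\sigma}$) with the algebraically conjugated $\lambda'$-eigenvector of $M_{\sigma}$, which gives the relation $\alpha^{\ast}\alpha'+(1-\alpha^{\ast})(1-\alpha')=0$ and hence the formula. The only difference is presentational — you normalise the orthogonal vector explicitly rather than writing the orthogonality relation and solving — and your closing sanity check on $L$ is vacuous (it is not primitive), but you correctly flag it as not part of the argument.
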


\begin{proof}
According to Remark \ref{rem:sigmastar},  the substitution matrix  of  $\sigma ^{\ast}$  is the
transpose of the substitution matrix of $\sigma$. The   eigenvector of  frequencies 
$(\alpha ^{\ast}, 1- \alpha ^{\ast})$  of 
 $\sigma ^{\ast}$ is thus orthogonal to the  eigenvector  
 $(\alpha' , 1- \alpha' )$ of  $M_{\sigma}$. Indeed, they have distinct eigenvalues  which are respectively
 $\lambda$ and $\lambda'$.
 This yields $\alpha ^{\ast} \alpha '+ (1- \alpha ^{\ast}) (1-\alpha')=0$, which gives the  desired  conclusion. \end{proof}

\begin{rem}
There  exist  several codings for  dual strands and dual substitutions  that are possible  \cite{ei,beir,BR08}.
One could indeed  code strands from right to left,  or  exchange the  roles played by $a$ and $b$.
In particular,   the  coding  developed by \cite{ei}     yields  $\widetilde{\sigma^{-1}}$,
where  $\tilde{\tau}$ is  the substitution deduced    from $\tau$ by reading the letters  in   the images  of  letters  by  $\tau$    in reverse order.
The substitution  matrices of  these substitutions   are either  equal  or    transpose  of the  substitution matrix of  $\sigma$.
Furthermore, their  frequency  belongs to $\{\alpha^{\ast}, 1- \alpha^{\ast}\}$.
Note also  that  a  similar   adequate coding  can be introduced if the  substitution matrix $M_{\sigma}$  has determinant $-1$.
\end{rem}

\section{Relations between distinct concepts of `dual substitution'} \label{sec:relations}

In this section we will compare the various notions of duality we have
introduced so far.  For this purpose we will introduce the appropriate
equivalence relations among  hulls,  tiling spaces etc. But first we
will have a closer look on the inverse of a word substitution.

 \subsection{Equivalences}  \label{subsec:conjug}

If $\sigma$ is a primitive invertible substitution on $\{a,b\}$, then it is
clear that $\sigma^{-1}$ cannot  be a substitution on the 
original letters $a,b$, since for the Abelianisation would follow:
$M_{\sigma} M_{\sigma^{-1}} = M_{\sigma} (M_{\sigma})^{-1} = \id$, but
this is impossible for primitive $M_{\sigma} \ge 0$ and
$(M_{\sigma})^{-1} \ge 0$. But we have the following:

\begin{prop}
Let $\sigma$ be an invertible substitution over $\{a,b\}$. Assume that  its substitution matrix
$M_{\sigma}$ has  determinant $+1$.  Then its
inverse is a substitution on the two-letter alphabet $\{a^{-1}, b\}$.
Furthermore if $\sigma$ is  primitive, then  its inverse is also  primitive.
\end{prop}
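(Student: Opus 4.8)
The plan is to exploit the generator structure of the monoid of invertible substitutions on two letters, exactly as in Proposition \ref{prop:dualstrand}. First I would record the inverses of the generators from \eqref{eq:generators}: one computes directly that $E^{-1}=E$, that $L^{-1}\colon a\mapsto a,\ b\mapsto a^{-1}b$, and that $\tilde L^{-1}\colon a\mapsto a,\ b\mapsto ba^{-1}$, each of which is indeed a (non-erasing) substitution on the alphabet $\{a^{-1},b\}$ — that is, each image is a nonempty word in the free monoid generated by $a^{-1}$ and $b$. Any invertible $\sigma$ with $\det M_\sigma=+1$ can be written as a product of the generators $E,L,\tilde L$ in which $E$ occurs an even number of times (the odd-$E$ words have determinant $-1$), and hence $\sigma^{-1}$ is the corresponding reversed product of the generator inverses. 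The point is then that the set of substitutions on the alphabet $\{a^{-1},b\}$ is closed under composition: if $\phi,\psi$ each send $a$ to a nonempty word over $\{a^{-1},b\}$ and $b$ likewise, and each fixes $a$ up to sign in the obvious sense, then so does $\phi\circ\psi$, since substituting nonempty positive words over $\{a^{-1},b\}$ into nonempty positive words over $\{a^{-1},b\}$ again yields nonempty positive words over $\{a^{-1},b\}$. Working this out requires a small amount of care because one must check that no cancellation forces the empty word to appear, but for these particular generator inverses the letter $a^{-1}$ is never produced by $E$, $L^{-1}$, or $\tilde L^{-1}$ acting on $b$ in a way that cancels a preceding $a^{-1}$; in fact one can argue more cleanly by tracking the Abelianisation, as in the paragraph preceding the proposition. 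So the first half follows by induction on the length of a decomposition of $\sigma$ into generators.

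For the Abelianisation bookkeeping: $M_{\sigma^{-1}} = (M_\sigma)^{-1}$, and since $\det M_\sigma = +1$ and $M_\sigma\ge 0$, the inverse has the form $(M_\sigma)^{-1} = \bigl(\begin{smallmatrix} d & -b \\ -c & a\end{smallmatrix}\bigr)$ where $M_\sigma = \bigl(\begin{smallmatrix} a & b \\ c & d\end{smallmatrix}\bigr)$ with $ad-bc=1$. Reading this with respect to the alphabet $\{a^{-1},b\}$ — where the Abelianisation now counts occurrences of $a^{-1}$ with a sign — one sees that the entries are all consistent with $\sigma^{-1}$ being an honest substitution on $\{a^{-1},b\}$: the column of $\sigma^{-1}(a)$ is $(d,-c)$, meaning $\sigma^{-1}(a)$ consists of $d$ copies of $a^{-1}$ (contributing $-d$ in the $a$-count, hence $+d$ in the $a^{-1}$-count) and... — here I'd rather not belabour the sign conventions, and would instead just cite the explicit generator computation above as the clean route, mentioning the Abelianisation only as a sanity check.

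For primitivity: suppose $\sigma$ is primitive, so $(M_\sigma)^n > 0$ for some $n$. I want $(M_{\sigma^{-1}})^m > 0$ for some $m$, where $M_{\sigma^{-1}} = (M_\sigma)^{-1}$ but now read in the $\{a^{-1},b\}$ basis — equivalently, up to the sign change on the first coordinate, $M_{\sigma^{-1}}$ and $(M_\sigma)^{-1}$ are conjugate by $\mathrm{diag}(-1,1)$. Since $M_\sigma$ is a primitive $2\times 2$ integer matrix with $\det = 1$, its eigenvalues are $\lambda > 1$ and $1/\lambda \in (0,1)$, both irrational (the integer case is excluded by unimodularity together with primitivity here), and $(M_\sigma)^{-1}$ has eigenvalues $\lambda^{-1}$ and $\lambda$, with the same eigenvectors; in particular $(M_\sigma)^{-1}$ has a dominant eigenvalue $\lambda > 1$ whose eigenvector is the Perron eigenvector $v'$ of $M_\sigma$ for the \emph{subdominant} eigenvalue $1/\lambda$, which is the vector whose entries are $(1, \ell'_\lambda)$ in the notation of Section \ref{sec:sturm}. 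The main obstacle is that this eigenvector $v'$ need not be positive, so one cannot conclude primitivity of $(M_\sigma)^{-1}$ directly; but after the coordinate change $\mathrm{diag}(-1,1)$ one gets $M_{\sigma^{-1}}$, and the claim is that \emph{this} matrix is primitive. I would verify this again on the generators — $M_{L^{-1}} = \bigl(\begin{smallmatrix}1 & -1\\0 & 1\end{smallmatrix}\bigr)$ in the $(a,b)$-basis becomes $\bigl(\begin{smallmatrix}1 & 1\\0 & 1\end{smallmatrix}\bigr)$ in the $(a^{-1},b)$-basis, and similarly $M_{\tilde L^{-1}}$ becomes $\bigl(\begin{smallmatrix}1 & 0\\1 & 1\end{smallmatrix}\bigr)$, while $M_{E^{-1}} = M_E$ is unchanged — so $M_{\sigma^{-1}}$ is a product of the nonnegative matrices $\bigl(\begin{smallmatrix}0&1\\1&0\end{smallmatrix}\bigr), \bigl(\begin{smallmatrix}1&1\\0&1\end{smallmatrix}\bigr), \bigl(\begin{smallmatrix}1&0\\1&1\end{smallmatrix}\bigr)$ in the reversed order. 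Then I'd invoke the standard fact (used implicitly throughout the paper) that a product of such matrices is primitive as soon as the decomposition "mixes" the two rows/columns sufficiently — which is exactly the condition that makes $\sigma$ itself primitive and is preserved under reversing the word and swapping $L \leftrightarrow \tilde L$. Concretely: $M_{\sigma^{-1}}$ is the transpose of a matrix obtained from $M_\sigma$ by the substitution-monoid involution, and primitivity is visibly invariant under transposition and under this involution, so $M_{\sigma^{-1}}$ is primitive iff $M_\sigma$ is. That closes the proof.
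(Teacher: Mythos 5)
Your overall strategy --- decompose $\sigma$ into the generators $E,L,\tilde L$ of \eqref{eq:generators}, invert each generator, and induct on the length of the decomposition --- is exactly the route the paper takes, and your treatment of primitivity (which the paper's proof leaves implicit) is sound once stripped of the detour through eigenvectors: in the $\{a^{-1},b\}$-basis one has $M_{\overline{\sigma}}=M_E M_{\sigma}^T M_E$ as in Remark \ref{rem:factor}, and primitivity is preserved by transposition and by conjugation with a permutation matrix, so no appeal to ``sufficient mixing'' of the generator word is needed.

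There is, however, a concrete error in your base case that breaks the induction as you have set it up. The inverse $E^{-1}=E$ is \emph{not} a substitution on $\{a^{-1},b\}$: it sends $a^{-1}\mapsto b^{-1}$ and $b\mapsto a$, so it maps words over $\{a^{-1},b\}$ to words over $\{a,b^{-1}\}$. Consequently your ``closure under composition'' step is also false as stated: if $\phi$ sends $a$ to a nonempty word over $\{a^{-1},b\}$, then $\phi(a^{-1})=\phi(a)^{-1}$ is a word over $\{a,b^{-1}\}$, so composing two such maps mixes the two signed alphabets and cancellation is no longer excluded for free. The correct bookkeeping is the one you hint at but never actually use: $L^{-1}$ and $\tilde L^{-1}$ preserve \emph{each} of the signed alphabets $\{a^{-1},b\}$ and $\{a,b^{-1}\}$ (for instance $L^{-1}(b^{-1})=b^{-1}a$), while $E$ exchanges them; since $\det M_\sigma=+1$ forces an even number of occurrences of $E$ in the decomposition, the composite $\sigma^{-1}=g_n^{-1}\circ\cdots\circ g_1^{-1}$ sends $a^{-1}$ and $b$ to nonempty positive words over $\{a^{-1},b\}$, with no cancellation at any intermediate stage because every intermediate word lives over a single signed alphabet. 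The tell-tale sign that your induction hypothesis is the wrong one is that the parity of the $E$'s, which you correctly observe, plays no role in your argument --- if each generator inverse really were a substitution on $\{a^{-1},b\}$ and composition preserved this, the conclusion would hold for $\det M_\sigma=-1$ as well, which it does not. The Abelianisation check you fall back on cannot plug this hole: it controls signed letter counts but not whether letters of both signs occur in $\sigma^{-1}(x)$.
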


This is a point where we need $\det(M_{\sigma}) = 1$. For
$\det(M_{\sigma}) = -1$, things are more difficult ($\sigma^{-1}$
would mix $a^{-1}, b$ with $a, b^{-1}$). In such a case we consider
$\sigma^2$.  

\begin{proof} 
That $\sigma^{-1}$ is indeed a substitution over $\{a^{-1}, b\}$ with
Abelianisation  $M_{\sigma^{-1}}$ can easily be checked by an induction
on the length of a decomposition of $\sigma$ on the set of generators
$\{E,L,\tilde{L}\}$ (see (\ref{eq:generators})). It suffices to consider $L^{-1}$ and
$\tilde{L}^{-1}$, they are substitutions over $\{a^{-1},b\}$.
\end{proof}

In Section \ref{sec:selfdual} we will address the question in which
cases the substitution $\sigma^{-1}$ yields the same hull as $\sigma$ 
(i.e., the same  set of  bi-infinite words),
up to renaming letters. We know already that $\sigma^{-1}$ is always a 
substitution on $\{a^{-1}, b\}$. This leaves two possibilities for
renaming the letters, namely $a \mapsto a^{-1}, \, b \mapsto b$, or 
$a \mapsto b^{-1}, \, b \mapsto a$. Both cases will occur: see Example  \ref{ex:fibsq2} and   \ref{ex:unchange} below where 
  two examples  of substitutions are given   whose   respective inverses yield the same  hull as the original substitutions,
up to renaming letters.  Thus we
define the `inverse' substitution (that we call reciprocal) as $\tau_a \sigma^{-1} \tau_a$, where
$\tau_a$ exchanges $a$ with $a^{-1}$, and later (see Definition \ref{def:selfdual}) we will regard $\sigma$
as selfdual, if its  reciprocal is either conjugate to  $\sigma$ itself, or  to $E \sigma
E$, with $E: a \to b, \; b \to a$.
This notion       coincides with the  notion of  duality introduced in \cite{BR08}, see  also in the same flavour \cite{KR}.

\begin{defi} \label{def:invsubs}
Let $\sigma$ be a substitution over $\A = \{a,b\}$ with determinant $1$, and let $\tau_{a} \in
\mbox{Aut}(F_2)$, $\tau_a: a \to a^{-1}, \; b \to b$. If $\sigma$ is
invertible, then we call $\overline{\sigma} := \tau_a
\sigma^{-1}\tau_a^{-1}$ the {\em   reciprocal substitution} of $\sigma$. 
\end{defi}
\begin{rem}\label{rem:factor}
If $\det M_{\sigma}= 1$, then 
$M_EM_{\overline{\sigma}}M_E = M_{\sigma } ^T.$
Let $M=\big( \begin{smallmatrix} p & q \\ r & s \end{smallmatrix}
\big)$. Then, since $\det M_{\sigma} =1$, one has
$M_{\sigma^{-1}} 
 = \big( \begin{smallmatrix}
     s & -q \\ 
     -r & p \\ 
   \end{smallmatrix}\big),$
   and  the result follows from
 $M_{\overline{\sigma}} = \big( \begin{smallmatrix} -1 & 0
  \\ 0 & 1 \end{smallmatrix} \big)  M_{\sigma} ^{-1}
\big( \begin{smallmatrix} -1 & 0 \\ 0 & 1 \end{smallmatrix} \big).$
In particular, the substitutions $\sigma$ and $\overline{\sigma} $   have the same inflation factor.
\end{rem}

\begin{example} \label{ex:fibsq2}
The square of the Fibonacci substitution (see Example \ref{ex:fibsq})
is the substitution $\rho\colon  a \to aba, \, b \to ab$. 
Then, $\rho^{-1} \colon  a^{-1} \to  a^{-1}b, \, b \to a^{-1}bb$, 
and 
\[ \overline{\rho} (a) = \tau_a \rho^{-1} \tau_a^{-1} (a) = \tau_a
\rho^{-1} (a^{-1}) = \tau _a(a^{-1}b) = ab. \]
\[ \overline{\rho} (b) = \tau_a \rho^{-1} \tau_a^{-1} (b) = \tau_a
\rho^{-1} (b) = \tau_a ( a^{-1} b b) = abb. \]
Thus the reciprocal substitution of $\rho$ is $\overline{\rho} : a \to
ab, \; b \to abb$. 

Now we can see that $\rho$ and $\overline{\rho}$ are conjugate, after
renaming the letters  according to $E: a
\to b, \; b \to a$. Indeed, the word $w=a$ yields a conjugation between $\rho$ and
$E \overline{\rho} E$. 
\begin{align*}
a E \overline{\rho} E (a) a^{-1} & = a baa a^{-1} =
aba = \rho(a),\\
a E \overline{\rho} E (b) a^{-1} & = a ba a^{-1} = ab = \rho(b).
\end{align*}
In particular, it follows that the hulls $\Xc_{\rho}$ and
$\Xc_{\overline{\rho}}$ are equal (up to renaming letters). This is a
case where the letters change their role: in $\Xc_{\rho}$ the letter
$a$ is more frequent, whereas in $\Xc_{\overline{\rho}}$ the letter $b$  is 
more frequent.\end{example}

\begin{example}\label{ex:unchange}
Consider $\sigma: a \to abaab, \; b \to ababaab$. Then, $\sigma^{-1} \colon
a^{-1} \to b a^{-1} a^{-1}b a^{-1}, \; b \to b  a^{-1} a^{-1} b a^{-1}
b a^{-1}$. Thus $\overline{\sigma}\colon  a \to baaba, \; b \to
baababa$. Here, $w=b^{-1}a^{-1}a^{-1}b^{-1}$ yields a conjugation
between $\sigma$ and 
$\overline{\sigma}$:
\begin{align*}
w \overline{\sigma}(a) w^{-1} & = b^{-1}a^{-1}a^{-1}b^{-1} baaba baab
= abaab = \sigma(a)\\
w \overline{\sigma}(b) w^{-1} & = b^{-1}a^{-1}a^{-1}b^{-1} baababa
baab = ababaab = \sigma(b)
\end{align*}
This is an example where the letters do not change their role.
\end{example}



Now we want to compare hulls with tiling spaces and strand spaces.
Thus we introduce suitable  equivalences of these spaces. 

\begin{defi}
Two hulls $\Xc_{\sigma}, \Xc_{\rho}$ over $\{a,b\}$ are 
{\em equivalent}, short $\Xc_{\sigma} \cong \Xc_{\rho}$, if there is a
letter-to-letter  morphism $\tau$, such that $\Xc_{\sigma} =
\tau(\Xc_{\rho} ) = \{ \tau(u) \, | \, u \in \Xc_{\rho} \}$.  \end{defi}

 In other words,   $\Xc_{\sigma}$ and $ \Xc_{\rho}$ are 
 equivalent either if $\Xc_{\sigma} = \Xc_{\rho}$, or  if $\Xc_{\sigma} = E(\Xc_{\rho})$,
where $E: a \to b, \; b \to a$. 

\begin{defi}
Two tilings $\T, \T'$ are called {\em equivalent}, short: $\T \cong
\T'$, if they are {\em similar}, i.e., there are $c>0, t \in \R$ such
that $c \T + t = \T'$. Such a map is called {\em similarity}. Let
$s,s'$ be primitive tile-substitutions. The tiling spaces $\X_s$ and
$\X_t$ are called equivalent, short: $\X_s \cong \X_{s'}$, if there is
a one-to-one similarity mapping $\X_s$   to $\X_{s'}$. 
\end{defi}

By  the analogue of Prop.\ \ref{uinxc}  for tiling space,
$\X_s \cong \X_{s'}$  whenever there are $\T \in \X_s, \T' \in \X_{s'}$ such
that $\T \cong \T'$, with $s,s'$ being primitive.

 We have seen in Section \ref{subsec:substil} that a   word substitution $\sigma$ yields a one-dimensional
  tile-substitution in a canonical way: the substitution matrix $M_{\sigma}$
yields the inflation factor $\lambda$, and the     left eigenvector of
$\lambda$ (unique up to scaling) yields the tile lengths. This, together with
the order of the tiles in $\sigma$, yields the digit set matrix $\D$.
Vice versa, a one-dimensional tile-substitution (where the tiles are
intervals) yields a  unique word substitution: just replace the
tiles by symbols. Whenever a word substitution $\sigma$ is linked
with a tile-substitution in this manner, we say $\sigma \cong s$. If
one of them (thus both) are primitive, then we write $\Xc_{\sigma}
 \cong \X_s$ whenever  $\sigma \cong s$.

Using the coding  maps (strand  coding, dual strand  coding), we can
define equivalence between strand spaces and dual strand spaces as
follows.
\begin{defi}   Let $\sigma, \sigma'$ be two unimodular  primitive substitutions over  a two-letter alphabet.
We define $\Xc_{\sigma} \cong \Xb_{\sigma'}$ if  $\psi
(\Xb_{\sigma'}) = \Xc_{\sigma}$, and  $\Xc_{\sigma} \cong \Xb^{\ast}_{\sigma'}$ 
if $\psi ^{\ast}(\Xb_{\sigma'}) = \Xc_{\sigma}$.
Furthermore, one  has  $\Xb_{\sigma} \cong \Xb_{\sigma'}$ if  $\psi
(\Xb_{\sigma'}) = \psi( \Xb_{\sigma})$.  
\end{defi}
Note that
$$\Xb^{\ast}_{\sigma} \cong \Xb_{\sigma^{\ast}} \cong   \Xc_{\sigma^{\ast}} .$$
\subsection{Equivalence theorem}
Let us summarise these considerations.

\begin{rem} \label{thm:equiv}
Let $\sigma$ be an invertible substitution on two letters, and let $s$
be some tile-substitution  derived from $\sigma$ as described
above. Then, by definition, \[ \Xb_{\sigma} \cong \Xc_{\sigma} \cong \X_{s}.\]  
\end{rem}

Now we can state the main theorem of this section:

\begin{thm}\label{thm:inv}
Let $\sigma$ be a  primitive  invertible substitution on two letters with determinant $1$.
Then 
\[ \Xb^{\ast}_{\sigma} \cong \Xb_{\sigma^{\ast}} \cong
\Xc_{\overline{\sigma}} \cong \Xc_{\sigma^{\ast}} \cong
\X_{s^{\star}}.\]  
\end{thm}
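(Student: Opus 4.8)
The plan is to establish the chain of equivalences by reducing everything to one common reference object, namely the hull $\Xc_{\sigma^{\ast}}$ of the combinatorially-defined dual substitution $\sigma^{\ast}$ from Definition~\ref{defi:dualsubstitution}. Three of the five equivalences are essentially bookkeeping and follow from work already done in the excerpt: $\Xb^{\ast}_{\sigma} \cong \Xb_{\sigma^{\ast}}$ and $\Xb_{\sigma^{\ast}} \cong \Xc_{\sigma^{\ast}}$ are exactly the content of the displayed remark $\Xb^{\ast}_{\sigma} \cong \Xb_{\sigma^{\ast}} \cong \Xc_{\sigma^{\ast}}$ at the end of Subsection~\ref{subsec:conjug}, which is immediate from the dual coding $\psi^{\ast}$ together with Proposition~\ref{prop:dualstrand} (the image of a finite dual strand under $E_1^{\ast}(\sigma)$ is again a finite dual strand, so $E_1^{\ast}(\sigma)$ really does act as the substitution $\sigma^{\ast}$ on the coding word). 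So the two genuine claims are $\Xc_{\sigma^{\ast}} \cong \Xc_{\overline{\sigma}}$ and $\Xc_{\sigma^{\ast}} \cong \X_{s^{\star}}$.

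\textbf{Reducing to substitution matrices.} For $\Xc_{\sigma^{\ast}} \cong \Xc_{\overline{\sigma}}$ I would argue as follows. By Remark~\ref{rem:sigmastar} the substitution matrix of $\sigma^{\ast}$ is $M_{\sigma}^{T}$, and by Remark~\ref{rem:factor} the substitution matrix of $\overline{\sigma}$ satisfies $M_E M_{\overline{\sigma}} M_E = M_{\sigma}^{T}$, i.e.\ $M_{\overline{\sigma}} = M_E M_{\sigma}^{T} M_E = M_{E\overline{\sigma}E}$ has the same matrix as $\sigma^{\ast}$ up to the letter swap $E$. Both $\sigma^{\ast}$ and $\overline{\sigma}$ are invertible (for $\overline{\sigma}$ this is clear since inverses of invertible substitutions are invertible; for $\sigma^{\ast}$ it is checked in Remark~\ref{rem:sigmastar} on the generators). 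Hence $\sigma^{\ast}$ and $E\overline{\sigma}E$ are two invertible substitutions with the same substitution matrix, so by Theorem~\ref{thm:sturmconjug} they are conjugate, and then Theorem~\ref{thm:rigidity} gives $\Xc_{\sigma^{\ast}} = \Xc_{E\overline{\sigma}E} = E(\Xc_{\overline{\sigma}})$, which is exactly $\Xc_{\sigma^{\ast}} \cong \Xc_{\overline{\sigma}}$ in the sense of the equivalence of hulls. (One should be slightly careful that $\overline{\sigma}$ is a substitution on $\{a^{-1},b\}$ relabelled to $\{a,b\}$ via $\tau_a$, but Definition~\ref{def:invsubs} has already absorbed that relabelling, so $\overline{\sigma}$ is a bona fide substitution on $\{a,b\}$.)

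\textbf{Matching the Galois dual.} The remaining equivalence $\Xc_{\sigma^{\ast}} \cong \X_{s^{\star}}$ is the one I expect to be the crux. Here $s$ is the tile-substitution attached to $\sigma$ with digit-set matrix $\D$ built from the left $\lambda$-eigenvector (Remark~\ref{rem:wtot}), and $s^{\star}$ is defined by $(\D^{T})^{\star}$, the Galois (star-map) image of the transposed digit matrix (Definition~\ref{def:dualtilsubst}). The key identity, already recorded in the Rauzy-fractal section, is $({\mathcal D}_{ij}^{\star})_{ij}^{T} = \lambda^{-1}({\mathcal E}_{ij})_{ij}$, where the ${\mathcal E}_{ji} = \{\lambda \Delta([\sigma(j)]_{k-1}) : (j,k)\in F_i\}$ are precisely the digit sets of the tile-substitution generated by the natural decomposition $(\RR_a,\RR_b)$ of the Rauzy fractal via equation~\eqref{eqn:rauzy-tiling}. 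Since rescaling a tile-substitution's digits (here by $\lambda^{-1}$) and the corresponding rescaling of the prototile lengths produces a similar tiling, and similar tilings give equivalent tiling spaces, $\X_{s^{\star}}$ is equivalent to the tiling space generated by the Rauzy tiling~\eqref{eqn:rauzy-tiling}. Then I would invoke the fact, again from the excerpt, that for invertible $\sigma$ the Rauzy-fractal pieces $\RR_a, \RR_b$ are intervals, so this Rauzy tiling is literally a one-dimensional tile-substitution on two interval prototiles; by Remark~\ref{rem:wtot} read backwards it corresponds to a word substitution, and a short check identifies that word substitution's production rule and order of tiles with $\sigma^{\ast}$ (the reversed-conjugacy ambiguities only change $\sigma^{\ast}$ up to conjugacy and the $E$-swap, which is harmless by Theorem~\ref{thm:rigidity}). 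Therefore $\X_{s^{\star}} \cong \Xc_{\sigma^{\ast}}$ in the sense of Remark~\ref{thm:equiv}. The main obstacle is exactly this last matching step: verifying that the geometric tile order in the Rauzy tiling coincides (up to the allowed symmetries) with the combinatorial order in which $\psi^{\ast}$ reads off $\sigma^{\ast}$ from $E_1^{\ast}(\sigma)(0,x^{\ast})$. I would handle it by comparing the two formulas directly — the digit positions $\lambda\Delta([\sigma(j)]_{k-1})$ in~\eqref{eq:dij} against the offsets $M_{\sigma}^{-1}(W + A(\sigma(j)_{k+1}\cdots))$ in the dual-map formula of Theorem~\ref{thm:defdual} — using $\Delta = \delta^{\star}$ and $\delta(w) = \langle A(w), v_{\lambda}\rangle$, and then checking the monotonicity of $W \mapsto \langle W, v_{\lambda}\rangle$ along the strand $\Sc_{\alpha}$ that underlies $\sigma^{\ast}$, so that "left-to-right along $\Sc_{\alpha}$" translates into "increasing star-coordinate," i.e.\ the tile order in $\X_{s^{\star}}$.
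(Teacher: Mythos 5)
Your proposal is correct, and for most of the chain it coincides with the paper's own proof: the reduction of $\Xc_{\sigma^{\ast}} \cong \Xc_{\overline{\sigma}}$ to the matrix identities $M_{\sigma^{\ast}} = M_{\sigma}^{T}$ and $M_E M_{\overline{\sigma}} M_E = M_{\sigma}^{T}$ followed by Theorems \ref{thm:sturmconjug} and \ref{thm:rigidity} is exactly the argument given there, and the equivalences $\Xb^{\ast}_{\sigma} \cong \Xb_{\sigma^{\ast}} \cong \Xc_{\sigma^{\ast}}$ are indeed taken as read from the definitions. Where you genuinely diverge is the link to the star-dual tile-substitution. The paper proves $\X_{s^{\star}} \cong \Xb^{\ast}_{\sigma}$ head-on: it replaces the lattice $\Z^2$ underlying $E_1^{\ast}(\sigma)$ by $\Lambda = \langle \left( \begin{smallmatrix} 1 \\ 1 \end{smallmatrix} \right), \left( \begin{smallmatrix} \ell_{\lambda} \\ \ell_{\lambda}^{\star} \end{smallmatrix} \right) \rangle_{\Z}$, notes that $M_{\sigma}^{-1}$ then acts as multiplication by $\lambda^{-1}$ in $G$ and by $\lambda$ in $H$ (this is where $\det M_{\sigma}=1$ enters), and translates the dual-map formula of Theorem \ref{thm:defdual} term by term into $x + T_i^{\star} \mapsto \lambda x + \{ T_j^{\star} - \D_{ji}^{\star} \}$, i.e.\ the production rule of $s^{\star}$. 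You instead recycle the identity $(\D^{T})^{\star} = \lambda^{-1}\mathcal{E}$ from Section \ref{sec:dcm}, pass through the Rauzy tiling \eqref{eqn:rauzy-tiling} by a rescaling/similarity argument, and then identify its word substitution with $\sigma^{\ast}$. The two routes rest on the same computation ($\Delta = \delta^{\star}$ on the lattice $\Lambda$); yours buys a shorter proof by reusing Section \ref{sec:dcm} and makes the Rauzy fractal's role as window explicit, at the cost of needing the fact that the pieces $\RR_a,\RR_b$ are intervals, which the paper's version avoids. In both treatments the final bookkeeping step --- that the geometric left-to-right order of tiles matches the order in which $\psi^{\ast}$ reads off $E_1^{\ast}(\sigma)(0,x^{\ast})$, up to conjugacy and the $E$-swap --- is only sketched; your device of checking monotonicity of $W \mapsto \langle W, v_{\lambda}\rangle$ along $\Sc_{\alpha}$ is a sound way to pin it down, and since every substitution in sight is invertible with substitution matrix $M_{\sigma}^{T}$ up to $E$-conjugation, Theorems \ref{thm:sturmconjug} and \ref{thm:rigidity} would also absorb any residual ordering ambiguity.
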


A different  proof  of the equivalence  between   $\sigma^{\ast}$ and
$E^{\ast}_1(\sigma)$ can also be found   in \cite{BR08}.

\begin{proof} 
In order to show the equivalence of $s^{\star}$ and
$E^{\ast}_1(\sigma)$, note that it does not matter on which lattice 
the paths in $E_1(\sigma)$ are defined: all lattices of full rank in
$\R^2$ are isomorphic.  We use here the notation of Section \ref{sec:cps}. So, let the underlying lattice be $\Lambda =
\langle \big( \begin{smallmatrix} 1 \\ 1 \end{smallmatrix} \big),   
        \big( \begin{smallmatrix} \ell_{\lambda} \\ \ell_{\lambda}^{\star}
\end{smallmatrix} \big) \rangle_{\Z}$, rather than $\Z^2$. 
The substitution matrix
$M_{\sigma}$ acts as an automorphism on $\Lambda$.
 Indeed, since the projection $\pi_1$ from
$\Lambda$ to the first coordinate is $(1, \ell_{\lambda})
\big( \begin{smallmatrix} \alpha \\   \beta \end{smallmatrix}
\big) = \alpha + \beta \ell_{}$, and 
$(1,\ell_{\lambda})$ is a left  eigenvector of $M_{\sigma}$, multiplication by
$M_{\sigma}$ in $\Lambda$ acts as multiplication by $\lambda$ in
$\Z[\ell_{\lambda}]$: 
\[ (1,\ell_{\lambda}) M_{\sigma} \big( \begin{smallmatrix} \alpha \\
  \beta \end{smallmatrix} \big) = \lambda (1, \ell_{\lambda})
\big( \begin{smallmatrix} \alpha \\   \beta \end{smallmatrix} \big) 
= \lambda (\alpha+ \beta \ell_{\lambda}). \]
The same holds for $\ell _{\lambda}^{\star}$.
In the next paragraph, the important idea is that $M_{\sigma}$ acts
thus as an automorphism in $\Z[\ell]$, and that a stepped path
considered here is in one-to-one correspondence with a tiling of the
line. 

We proceed by translating the formal sum 
$$E_1^*(\sigma)(W,i^*)=\sum_{j,k :\  \sigma(j)[k]=i} \big(
M_{\sigma}^{-1}(W+A(\sigma(j)_{k+1}\cdots \sigma(j)_{|\sigma(j)|}), j^{\ast} \big)$$
into the language of digit sets and tile-substitutions, and into the
internal space $H$. 
Then, multiplication by $M_{\sigma}^{-1}$ in the integers
$\Z[\ell_{\lambda}]$, embedded in $G$, is just multiplication by
$\lambda^{-1}$ in $G$. And, by construction of the lattice
$\Lambda$, multiplication by $M_{\sigma}^{-1}$ in the integers
$\Z[\ell_{\lambda}]$, embedded in $H$, is multiplication by
$(\lambda')^{-1}= (\lambda^{-1})^{-1} = \lambda$ in $H$ (we use the fact that  $\sigma$ has determinant $1$). Furthermore, 
the term $M^{-1}_{\sigma} A (u)$, projected to $H$, reads in
$\Z[\ell_{\lambda}]$ as $A(u) = |u|_a + |u|_b \ell_{\lambda}^{\star}$. The formal sum
above translates into 

\[ x + T^{\star}_i  \mapsto \{ \lambda x + T^{\star}_j - t_{ijn} \mid
 n,j, \mbox{ such that the } n\mbox{-th letter in } \sigma(j)
\mbox{ is } i \}, \]

where $t_{ijn}$ denotes the `prefix' of the $n$-th letter, this
means here: the digit $d_{ijn}$ starred, that is, $d^{\star}_{jin}$.   Here $x$ is the projection of $W$.
In other words, this means

\[ x + T^{\star}_i  \mapsto \lambda x + \{ T^{\star}_j -
d^{\star}_{jin} \mid  i=1, \ldots, m, n=1, \ldots, N \} = \lambda x + \{ T^{\star}_j
- \D^{\star}_{ji} \mid  i=1, \ldots, m \}, \]

where $N = N(i,j) = | \{ j \, | \, \sigma(j)_n = i \}$. This shows that $s^{\star}$ and $E_1^{\ast}(\sigma)$ are equivalent.

The equivalence $\Xc_{\overline{\sigma}}\cong \Xc_{\sigma^{\ast}}$
comes from the fact that $\overline{\sigma}$ and $\sigma^{\ast}$ have
equivalent  substitution  matrices in $SL(2, {\mathbb Z})$. Indeed, 
 the substitution  matrix of $\sigma^{\ast}$ is the transpose
of $M_{\sigma}$, whereas 
 the substitution  matrix of ${\overline{\sigma}}$
satisfies $M_EM_{\overline{\sigma}}M_E = M_{\sigma } ^T$ according to Remark \ref{rem:factor}.
By  Theorem
\ref{thm:sturmconjug} and \ref{thm:rigidity} the claim follows.
\end{proof}

\section{Selfduality} \label{sec:selfdual}

With respect to Definition \ref{def:invsubs}, it is now natural to ask
which substitutions and  substitution hulls are selfdual.

\begin{defi}\label{def:selfdual}
Let $\sigma$ be a primitive two-letter substitution over $\{a,b\}$ with determinant $1$. If $\Xc_{\sigma}
= \Xc_{\overline{\sigma}}$, or $\Xc_{\sigma} =
E(\Xc_{\overline{\sigma}})$,  then $\sigma$ is  said to have a {\em selfdual hull}.

If $\sigma  \sim  \overline{ \sigma}$ or  $\sigma  \sim  E\overline{ \sigma}E$,    then the substitution   $\sigma$ is  said to be  a   {\em selfdual substitution}.
\end{defi}
By Theorem \ref{thm:inv}, this definition translates immediately to
star-duals $s^{\star}$, and to dual maps of substitution
$\sigma^{\ast}$. 

According to  Remark \ref{rem:factor} and Corollary \ref{cor:xsxr},  we deduce the following:

\begin{prop} \label{prop:selfsim}
A substitution is  selfdual if and only if its hull  $\Xc_{\sigma}$ is  selfdual.
\end{prop}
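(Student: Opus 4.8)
The plan is to prove both implications of the equivalence, relying on Corollary \ref{cor:xsxr} to translate statements about hulls into statements about conjugacy (up to powers of the inflation factor), which is legitimate because $\sigma$ and $\overline{\sigma}$ share the same inflation factor by Remark \ref{rem:factor}.

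First I would prove the easy direction: if $\sigma$ is a selfdual substitution, then its hull is selfdual. By definition $\sigma\sim\overline{\sigma}$ or $\sigma\sim E\overline{\sigma}E$. By Theorem \ref{thm:rigidity}, conjugate substitutions generate the same hull, so in the first case $\Xc_{\sigma}=\Xc_{\overline{\sigma}}$, and in the second $\Xc_{\sigma}=\Xc_{E\overline{\sigma}E}=E(\Xc_{\overline{\sigma}})$ (applying $E$ letter-to-letter commutes with forming the hull). Either way $\Xc_{\sigma}$ is selfdual in the sense of Definition \ref{def:selfdual}.

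For the converse, suppose $\Xc_{\sigma}$ is selfdual, i.e.\ $\Xc_{\sigma}=\Xc_{\overline{\sigma}}$ or $\Xc_{\sigma}=E(\Xc_{\overline{\sigma}})=\Xc_{E\overline{\sigma}E}$. In the first case, $\sigma$ and $\overline{\sigma}$ are primitive invertible substitutions (invertibility of $\overline{\sigma}$ follows since $\overline{\sigma}=\tau_a\sigma^{-1}\tau_a^{-1}$ is a composition of automorphisms of $F_2$) with the same inflation factor (Remark \ref{rem:factor}) and equal hulls. Corollary \ref{cor:xsxr} then yields $\sigma\sim\overline{\sigma}$. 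In the second case, note that $E\overline{\sigma}E$ is again primitive and invertible with the same inflation factor as $\sigma$, and $\Xc_{\sigma}=\Xc_{E\overline{\sigma}E}$, so Corollary \ref{cor:xsxr} gives $\sigma\sim E\overline{\sigma}E$. In both cases $\sigma$ is a selfdual substitution by Definition \ref{def:selfdual}.

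The main point requiring care is that Corollary \ref{cor:xsxr} applies only to invertible substitutions with the \emph{same} inflation factor; here both hypotheses are met precisely because of Remark \ref{rem:factor} (which guarantees $M_{\overline{\sigma}}$, and hence $M_{E\overline{\sigma}E}=M_{\sigma}^{T}$, has the same dominant eigenvalue $\lambda$ as $M_\sigma$) and because conjugating by the permutation matrix $M_E$ or by $\tau_a$ preserves invertibility. The only other subtlety is checking that $E$ applied letterwise intertwines the hull operation, i.e.\ $\Xc_{E\rho E}=E(\Xc_\rho)$; this is immediate from the definition \eqref{eq:hull} of the hull since $E$ is a bijective renaming of letters. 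No genuine obstacle arises beyond bookkeeping of these hypotheses.
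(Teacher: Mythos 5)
Your proof is correct and follows essentially the same route as the paper, which derives the proposition precisely from Remark \ref{rem:factor} (equality of inflation factors for $\sigma$ and $\overline{\sigma}$) together with Corollary \ref{cor:xsxr}; you have merely spelled out the two cases and the hypotheses checks that the paper leaves implicit.
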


\begin{prop} \label{pmp}
If $\sigma$ is selfdual (with determinant $1$),  then $(M_{\sigma})^{-1} = Q^{-1}M_{\sigma}Q$
for either $Q = M_{\tau_a}=\big( \begin{smallmatrix} -1 & 0 \\ 0 &
  1 \end{smallmatrix} \big)$, or $Q =M_{\tau_a}M_ E= \big( \begin{smallmatrix} 0 & -1
  \\ 1 & 0 \end{smallmatrix} \big)$; and $M_{\sigma} ^T  = P 
M_{{\sigma}}  P ^T$, with either $P=M_E= \big( \begin{smallmatrix} 0
  & 1 \\ 1 & 0 \end{smallmatrix} \big)$, or $P = \id$. 
\end{prop}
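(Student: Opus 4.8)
The plan is to push the whole statement down to the level of substitution matrices and then quote Remark~\ref{rem:factor}. The one fact needed beyond that remark is that conjugate substitutions have the same substitution matrix: if $\sigma = \gamma_w\circ\rho$, the inner automorphism $\gamma_w$ acts trivially on the (signed) Abelianisation $F_2\to\Z^2$, so $M_\sigma = M_\rho$; and for the genuine nonnegative word substitutions $\overline\sigma$ and $E\overline\sigma E$ this signed Abelianisation coincides with the ordinary substitution matrix. Hence, by Definition~\ref{def:selfdual}, selfduality of $\sigma$ forces one of the two matrix identities $M_\sigma = M_{\overline\sigma}$ or $M_\sigma = M_E M_{\overline\sigma} M_E$. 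From Remark~\ref{rem:factor} we also have $M_{\overline\sigma} = M_{\tau_a} M_\sigma^{-1} M_{\tau_a}$ (using $M_{\tau_a}^{-1}=M_{\tau_a}$) and $M_E M_{\overline\sigma} M_E = M_\sigma^{\,T}$. I would then split into the two cases.

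If $M_\sigma = M_{\overline\sigma}$, substituting gives $M_\sigma = M_{\tau_a} M_\sigma^{-1} M_{\tau_a}$, and multiplying on both sides by $M_{\tau_a}$ yields $M_\sigma^{-1} = M_{\tau_a} M_\sigma M_{\tau_a} = M_{\tau_a}^{-1} M_\sigma M_{\tau_a}$, which is the first assertion with $Q = M_{\tau_a}$; moreover $M_\sigma^{\,T} = M_E M_{\overline\sigma} M_E = M_E M_\sigma M_E = M_E M_\sigma M_E^{\,T}$, so $P = M_E$. If instead $M_\sigma = M_E M_{\overline\sigma} M_E$, then combining with $M_E M_{\overline\sigma} M_E = M_\sigma^{\,T}$ gives $M_\sigma = M_\sigma^{\,T}$, i.e.\ $M_\sigma^{\,T} = \id\,M_\sigma\,\id^{\,T}$ and $P = \id$; and solving $M_E M_\sigma M_E = M_{\overline\sigma} = M_{\tau_a} M_\sigma^{-1} M_{\tau_a}$ for $M_\sigma^{-1}$ gives $M_\sigma^{-1} = (M_{\tau_a}M_E)\,M_\sigma\,(M_{\tau_a}M_E)^{-1}$, where $(M_{\tau_a}M_E)^{-1} = M_E M_{\tau_a}$. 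Writing $Q = M_{\tau_a}M_E = \big(\begin{smallmatrix}0 & -1\\ 1 & 0\end{smallmatrix}\big)$, this reads $M_\sigma^{-1} = Q M_\sigma Q^{-1}$; applying $Q^{-1}(\cdot)Q$ gives $Q^{-1}M_\sigma^{-1}Q = M_\sigma$, and taking inverses yields the stated normal form $M_\sigma^{-1} = Q^{-1} M_\sigma Q$.

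The argument is essentially a few lines of matrix algebra, so there is no serious obstacle; the points to watch are the reduction to matrices via the fact that passing to a conjugate substitution leaves the substitution matrix unchanged, the correct use of the two formulas from Remark~\ref{rem:factor}, and the small amount of bookkeeping needed to present the conjugating matrix in exactly the stated form ($Q$ versus $Q^{-1}$, and $M_E$ versus $M_E^{\,T}$, noting $M_E^{\,T}=M_E$).
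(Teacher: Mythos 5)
Your proposal is correct and follows essentially the same route as the paper: reduce selfduality to the two matrix identities $M_\sigma = M_{\overline\sigma}$ or $M_\sigma = M_E M_{\overline\sigma} M_E$ and then invoke Remark~\ref{rem:factor}. The only difference is cosmetic: the paper finishes each case by an explicit $2\times 2$ entry computation (using $\det M_\sigma = 1$ to write $M_\sigma^{-1}$ in terms of $p,q,r,s$), whereas you obtain the same conjugation relations abstractly from the second identity $M_{\overline\sigma} = M_{\tau_a} M_\sigma^{-1} M_{\tau_a}$ of that remark.
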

\begin{proof}
If  $\sigma$ is selfdual, then $M_{\sigma} =
M_{\overline{\sigma}}$, or $M_{\sigma} = M_E M_{\overline{\sigma}} M_E$.
Furthermore,  by Remark \ref{rem:factor},
 $M_EM_{\overline{\sigma}}M_E = M_{\sigma } ^T.$
 Hence  one has either  $M_{\sigma} ^T  =  
M_{{\sigma}}  $, or  $M_{\sigma} ^T  =   EM_{\sigma} E$.

Provided that $\mbox{det} M_{\sigma}=+1$,  one has
$M_{\sigma}^{-1} =\big( \begin{smallmatrix} s & -q  \\ -r  & p  \end{smallmatrix}
\big), $ with 
 $M_{\sigma}=\big( \begin{smallmatrix} p & q  \\ r  & s  \end{smallmatrix}
\big)$.

If   $M_{\sigma} = M_{\sigma}  ^T
$,  then 
one gets $
 (M_{\tau_a} M_E)^{-1}   M_{\sigma}  (    M_{\tau_a}  M_E) =  \big( \begin{smallmatrix} s & -r  \\ -q & p  \end{smallmatrix}
\big)= M_{\sigma^{-1}} ,
$
since $ r=q  $.

If   $M_{\sigma} = M_E M_{\sigma}  ^T
M_E= \big( \begin{smallmatrix} p & r  \\ q & s  \end{smallmatrix}
\big) $,  then 
$  M_{\tau_a}    M_{\sigma}   M_{\tau_a}  =    \big( \begin{smallmatrix} p & -q  \\ -r & s  \end{smallmatrix}
\big), 
$
since $p=s$.
\end{proof}

The following theorem states that this necessary condition is already
sufficient. 

\begin{thm} \label{thm:master}
Let $P, Q$ be as above (two possibilities each). If $\sigma$ is a
two-letter primitive invertible substitution with $\det M_{\sigma}=1$,
then the following are equivalent:
\begin{enumerate}
\item $\sigma$ is selfdual;
\item $M_{\sigma}$ is of the form
\[ M_{m,k} = \big( \begin{smallmatrix} m & k \\ \frac{m^2-1}{k} & m
\end{smallmatrix} \big) \quad \mbox{or} \quad M'_{m,k} = \big(
\begin{smallmatrix} m & k \\ k & \frac{k^2+1}{m} 
\end{smallmatrix} \big), \]
where $k \ge 1$ divides $m^2-1$, respectively $m \ge 1$ divides
$k^2+1$;
\item $Q^{-1} M_{\sigma} Q = (M_{\sigma})^{-1}$;
\item $P^T M_{\sigma} P = (M_{\sigma})^T$.

\end{enumerate} 
\end{thm}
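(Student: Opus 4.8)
The plan is to prove the chain of equivalences $(1)\Leftrightarrow(3)\Leftrightarrow(4)$ first, since these are essentially linear-algebraic restatements of Proposition \ref{pmp} together with its converse, and then to reduce $(2)$ to $(3)$ and $(4)$ by a direct computation with $2\times 2$ integer matrices. Throughout I write $M_\sigma = \big(\begin{smallmatrix} p & q \\ r & s\end{smallmatrix}\big)$ with $ps-qr=1$, $p,q,r,s \ge 0$ (up to replacing $\sigma$ by a conjugate, which does not change $M_\sigma$), and I use that $M_\sigma$ is primitive, so $q,r \ge 1$.

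First I would establish $(1)\Leftrightarrow(3)$ and $(1)\Leftrightarrow(4)$. The forward implications $(1)\Rightarrow(3)$ and $(1)\Rightarrow(4)$ are exactly Proposition \ref{pmp}. For the converse, suppose $(3)$ holds, i.e.\ $Q^{-1}M_\sigma Q = M_\sigma^{-1}$ for one of the two matrices $Q$. Then $M_\sigma$ and $M_\sigma^{-1}$ are conjugate in $GL(2,\Z)$, and since $M_\sigma^{-1} = M_{\overline\sigma}$ up to conjugation by $M_E$ (Remark \ref{rem:factor}), a short case check shows that $M_\sigma$ is conjugate to either $M_{\overline\sigma}$ or $M_E M_{\overline\sigma} M_E$ \emph{by a matrix in $SL(2,\Z)$}. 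By Theorem \ref{thm:sturmconjug}, invertible substitutions with the same substitution matrix are conjugate; combined with Theorem \ref{thm:rigidity} (and Corollary \ref{cor:xsxr}, since $\sigma$ and $\overline\sigma$ share the inflation factor by Remark \ref{rem:factor}), this gives $\sigma \sim \overline\sigma$ or $\sigma \sim E\overline\sigma E$, i.e.\ $(1)$. The implication $(4)\Rightarrow(1)$ is analogous, using that $M_{\sigma^\ast} = M_\sigma^T$ (Remark \ref{rem:sigmastar}) and the equivalence $\Xc_{\overline\sigma}\cong\Xc_{\sigma^\ast}$ from Theorem \ref{thm:inv}. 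In fact it is cleanest to observe directly that $(3)$ and $(4)$ are equivalent to each other as purely matrix statements: $P^T M_\sigma P = M_\sigma^T$ with $P = \id$ says $M_\sigma$ is symmetric ($q=r$), with $P = M_E$ it says $p=s$; and one checks by the explicit formula $M_\sigma^{-1} = \big(\begin{smallmatrix} s & -q \\ -r & p\end{smallmatrix}\big)$ that $M_{\tau_a}M_\sigma M_{\tau_a} = M_\sigma^{-1}$ iff $p=s$, while $(M_{\tau_a}M_E)^{-1}M_\sigma(M_{\tau_a}M_E) = M_\sigma^{-1}$ iff $q=r$. So $(3)\Leftrightarrow(4)$, and each is equivalent to the dichotomy ``$q=r$ or $p=s$''.

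It remains to show $(2)$ is equivalent to this dichotomy ``$q=r$ or $p=s$'' (for a primitive unimodular nonnegative matrix). If $q=r$, write $m=p$, $k=q$; then $ps-qr=1$ forces $mp' - k^2 = 1$ for the lower-right entry $p'=s$, i.e.\ $s = (k^2+1)/m$ with $m \mid k^2+1$, giving the form $M'_{m,k}$. If $p=s$, write $m=p=s$, $k=q$; then $m^2 - k r = 1$, so $r = (m^2-1)/k$ with $k \mid m^2-1$, giving the form $M_{m,k}$. Conversely, any matrix of the form $M_{m,k}$ has $p=s=m$, and any of the form $M'_{m,k}$ has $q=r=k$, so both satisfy the dichotomy; one should also note the positivity and primitivity conditions match up (the divisibility hypotheses $k\ge 1$, $m \ge 1$ guarantee the off-diagonal entries are positive integers, hence the matrix is primitive). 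Assembling: $(2)\Leftrightarrow$ (``$q=r$ or $p=s$'') $\Leftrightarrow (3) \Leftrightarrow (4) \Leftrightarrow (1)$.

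The only genuinely non-routine point is the converse step $(3)\Rightarrow(1)$ (and its analogue for $(4)$): one must pass from \emph{conjugacy of substitution matrices} to \emph{conjugacy of substitutions and hence equality of hulls}, and here the distinction between conjugation in $GL(2,\Z)$ versus in $SL(2,\Z)$ matters, as does the subtlety that equality of hulls is only equivalence ``up to renaming letters'' and ``up to powers'' — this is precisely why Definition \ref{def:selfdual} allows both $\sigma\sim\overline\sigma$ and $\sigma\sim E\overline\sigma E$. I expect the bookkeeping of which of the two $P$'s and two $Q$'s corresponds to which case to require care, but it is the kind of finite check that Proposition \ref{pmp} has already half-carried out; everything else is a direct computation with $2\times2$ matrices.
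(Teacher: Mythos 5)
Your proposal is correct and follows essentially the same route as the paper: $(2)\Leftrightarrow(3)\Leftrightarrow(4)$ by direct computation with $2\times 2$ matrices, $(1)\Rightarrow(3)$ from Proposition \ref{pmp}, and $(3)\Rightarrow(1)$ via Remark \ref{rem:factor} and Theorem \ref{thm:sturmconjug}. One sharpening: in the step $(3)\Rightarrow(1)$ the case check actually yields the \emph{equality} $M_{\overline{\sigma}}=M_{\sigma}$ or $M_{\overline{\sigma}}=M_E M_{\sigma} M_E$ (not merely conjugacy in $SL(2,\Z)$, which would be insufficient since Theorem \ref{thm:sturmconjug} requires equal substitution matrices and conjugate-but-unequal matrices give different frequencies); once this is stated, the additional appeals to Theorem \ref{thm:rigidity} and Corollary \ref{cor:xsxr} are unnecessary.
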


\begin{proof} 
It is easily seen  that $(2) \Leftrightarrow (3) \Leftrightarrow (4)$
by simple computation. Indeed,  compute the inverse matrices:
$M_{m,k}^{-1} = \big( \begin{smallmatrix} m & -\frac{m^2-1}{k} \\ -k & m
\end{smallmatrix} \big)$ (resp.\ $(M'_{m,k})^{-1}= \big( \begin{smallmatrix}
  \frac{k^2+1}{m} & -k \\ -k & m  \end{smallmatrix} \big)$). These are
obtained as $Q^{-1}M_{m,k}Q$. Obviously, $Q^T = Q^{-1}$. 

We know already that $(1) \Rightarrow (3)$. It remains to 
prove that $(3) \Rightarrow (1)$. Let $\sigma$ be invertible
with  substitution matrix of the form  (2) in Theorem \ref{thm:master}.
Then, by Proposition \ref{prop:selfsim}, $M_{\overline{\sigma}} =
M_{\sigma}$, or $M_{\overline{\sigma}} = P M_{\sigma}P$. Then, by
Theorem \ref{thm:sturmconjug}, $\sigma$ is conjugate to 
$\overline{\sigma}$, or to $E(\overline{\sigma})$,
proving the claim.  
\end{proof}

\begin{rem}
By Theorem \ref{thm:inv} the above result immediately transfers to
selfdual tile-substitutions (on the line with two tiles) and to dual
maps of substitutions.
\end{rem}

\begin{rem}
For every  matrix $M_{m,k}$ there exists a  selfdual   substitution having this matrix as  substitution matrix,
since the map $\sigma \mapsto M_{\sigma}$  is  onto from the set of  invertible two-letter substitutions onto
$GL(2,\mathbb{Z})$.
\end{rem}

The following result gives a necessary condition not only for $\sigma$
being selfdual, but --- slightly stronger --- for $\sigma \sim
\sigma^*$, in terms of the continued fraction expansion of the
frequency $\alpha$.  Note that  one  can state  analogous conditions for    $\alpha=1-\alpha^{\ast}$.

\begin{thm}\label{thm:sdfreq}
Let $\sigma$   be  a  primitive invertible  substitution on two letters with frequency $\alpha$. Let $\alpha'$
stand for the   algebraic conjugate of $\alpha$.
The following conditions   are equivalent:
\begin{enumerate}
\item
 $\alpha=\alpha^{\ast}$
\item
$2 \alpha \alpha'=\alpha+\alpha'-1$
\item
$\alpha=[0;1+n_1,\overline{n_{2},\cdots ,n_k,n_1}]$  or    $\alpha=[0;1,\overline{n_{1},\cdots ,n_k}]$,
with  the  word  $n_1\cdots n_k$      being a palindrome, i.e.,
 $(n_1,\cdots, n_k)=(n_k,\cdots, n_1)$.
\end{enumerate}
\end{thm}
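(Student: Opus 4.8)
The plan is to prove $(1)\Leftrightarrow(2)$ by a one-line M\"obius manipulation and $(2)\Leftrightarrow(3)$ via the theory of reduced quadratic irrationals and Galois's theorem on palindromic periods of continued fractions. Throughout we may assume $\det M_\sigma=1$, as elsewhere in this section (this is where $\alpha^\ast$ is defined). Note first that $\alpha$ is a genuine quadratic irrational: the inflation factor $\lambda>1$ is a unit of a real quadratic order, hence irrational, so the normed Perron eigenvector $(1-\alpha,\alpha)$ is not rational and $\alpha'\neq\alpha$, $\alpha'$ irrational. For $(1)\Leftrightarrow(2)$: Theorem~\ref{thm:conjugue} gives $\alpha^\ast=\frac{\alpha'-1}{2\alpha'-1}$ (the denominator is nonzero since $\alpha'$ is irrational), so $\alpha=\alpha^\ast$ is equivalent to $\alpha(2\alpha'-1)=\alpha'-1$, i.e.\ to $2\alpha\alpha'=\alpha+\alpha'-1$, which is precisely $(2)$.

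For $(2)\Leftrightarrow(3)$ the device is to pass to the slope $\ell:=\frac{\alpha}{1-\alpha}>0$, with algebraic conjugate $\ell'=\frac{\alpha'}{1-\alpha'}$. Substituting $\alpha=\frac{\ell}{1+\ell}$, $\alpha'=\frac{\ell'}{1+\ell'}$ into $2\alpha\alpha'-\alpha-\alpha'+1$ and clearing the denominator $(1+\ell)(1+\ell')$ gives exactly $\ell\ell'+1$; hence $(2)$ is equivalent to $\ell\ell'=-1$, i.e.\ to $\ell$ having norm $-1$ over $\Q$. The second, crucial identity is the clean relation $\tfrac1\alpha=1+\tfrac1\ell$. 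Writing the regular continued fraction $\alpha=[0;a_1,a_2,a_3,\dots]$, we distinguish $a_1=1$ (equivalently $\alpha>\tfrac12$, $\ell>1$), in which case $\tfrac1\alpha-1=[0;a_2,a_3,\dots]$ yields $\ell=[a_2;a_3,a_4,\dots]$; and $a_1\geq 2$ (equivalently $\alpha<\tfrac12$, $\ell<1$), in which case $\eta:=\ell^{-1}=[a_1-1;a_2,a_3,\dots]>1$ and $\eta\eta'=(\ell\ell')^{-1}=-1\Leftrightarrow(2)$.

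Now assume $(2)$. If $a_1=1$, then $\ell>1$ and $\ell'=-\ell^{-1}\in(-1,0)$, so $\ell$ is a reduced quadratic irrational; by Galois's theorem its expansion is purely periodic, $\ell=[\overline{n_1,\dots,n_k}]$ with $(n_1,\dots,n_k)$ its minimal period, and the period-reversal half of Galois's theorem gives $-1/\ell'=[\overline{n_k,\dots,n_1}]$. Since $\ell=-1/\ell'$, uniqueness of purely periodic expansions forces $(n_1,\dots,n_k)=(n_k,\dots,n_1)$, a palindrome; reading off $\ell=[a_2;a_3,\dots]$ then gives $\alpha=[0;1,\overline{n_1,\dots,n_k}]$, the second form of $(3)$. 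If $a_1\geq 2$, the identical argument applied to $\eta$ produces $\eta=[\overline{n_1,\dots,n_k}]$ with palindromic period, $a_1=1+n_1$, and $\alpha=[0;1+n_1,\overline{n_2,\dots,n_k,n_1}]$, the first form of $(3)$. Conversely, given $(3)$ in either form, $\ell$ (resp.\ $\ell^{-1}$) equals $[\overline{n_1,\dots,n_k}]$, which is purely periodic hence reduced, and palindromy of the period gives $-1/\ell'=[\overline{n_k,\dots,n_1}]=\ell$ (resp.\ the analogue for $\ell^{-1}$), i.e.\ $\ell\ell'=-1$, which is $(2)$. (A palindromic non-minimal period is a power of a palindromic minimal one, so nothing is lost in passing between the two.)

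The main obstacle I anticipate is the bookkeeping in $(2)\Leftrightarrow(3)$: one must split cleanly on $a_1=1$ versus $a_1\geq 2$ (equivalently $\alpha$ on either side of $\tfrac12$), keep track of the cyclic shift relating the minimal period $(n_1,\dots,n_k)$ of $\ell$ or $\ell^{-1}$ to the period $(n_2,\dots,n_k,n_1)$ that actually appears in the expansion of $\alpha$ in the first form, and invoke the two halves of Galois's theorem (purely periodic $\Leftrightarrow$ reduced; and $-1/\ell'=[\overline{n_k,\dots,n_1}]$) in the correct direction. The symmetry $\alpha\leftrightarrow 1-\alpha$, under which $(2)$ is invariant and the two forms of $(3)$ are exchanged, could be used to reduce to a single case; and it is worth remarking that this result runs parallel to Theorem~\ref{thm:intro}(4): writing $M_\sigma$ (up to conjugacy and powers) as an alternating product $M_L^{a_1}M_R^{a_2}\cdots$, passing to $\overline{\sigma}$ transposes the matrix and hence reverses this word, which is palindromic exactly when the $a_i$ are the continued fraction data recorded in $(3)$.
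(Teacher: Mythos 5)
Your proof is correct, and for the key equivalence $(2)\Leftrightarrow(3)$ it takes a genuinely different route from the paper. The step $(1)\Leftrightarrow(2)$ is the same in both: a direct rearrangement of the formula $\alpha^{\ast}=\frac{\alpha'-1}{2\alpha'-1}$ from Theorem~\ref{thm:conjugue}. For $(2)\Leftrightarrow(3)$, however, the paper routes everything through Theorem~\ref{thm:cfdual} in the appendix: it decomposes $\sigma$ over the monoid generated by $E$ and $L$, uses $L^{\ast}=R$ and $E^{\ast}=E$ to compute the continued fraction expansion of the dual frequency $\alpha^{\ast}$ explicitly in each of several cases, and then reads off the palindrome condition by comparing the expansions of $\alpha$ and $\alpha^{\ast}$ term by term. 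You instead observe that, after substituting $\alpha=\ell/(1+\ell)$, condition $(2)$ collapses to the norm condition $\ell\ell'=-1$ on the slope $\ell=\alpha/(1-\alpha)$, and then the palindromy of the period is exactly the two halves of Galois's theorem (reduced $\Leftrightarrow$ purely periodic, and $-1/\ell'$ carries the reversed period); your computation that $2\alpha\alpha'-\alpha-\alpha'+1$ clears to $\ell\ell'+1$ is correct, as is the case split on $a_1=1$ versus $a_1\ge 2$ via $1/\alpha=1+1/\ell$. This makes $(2)\Leftrightarrow(3)$ a purely arithmetic statement about quadratic irrationals in $(0,1)$, with no input from the substitution beyond the fact that $\alpha$ is a quadratic irrational; it is shorter and more self-contained than the appendix computation. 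What the paper's route buys in exchange is strictly more information, namely the explicit continued fraction of $\alpha^{\ast}$ in terms of that of $\alpha$ even when $\sigma$ is not selfdual (Theorem~\ref{thm:cfdual}), which is of independent interest and is also what the paper uses to prove Theorem~\ref{thm:conjugue} itself. One small point worth keeping in your write-up: your parenthetical remark that a palindromic non-minimal period forces nothing to be lost is indeed needed for the converse direction of $(3)\Rightarrow(2)$, since statement $(3)$ does not require the displayed period to be minimal; your justification (the reversal identity $-1/\ell'=[\overline{n_k,\dots,n_1}]$ holds for any period, minimal or not) is the right one.
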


\begin{proof}
Assertion (2)  comes from Theorem \ref{thm:conjugue}.
Assertion (3)  comes  (2) together  with the known relation between the continued fraction expansion  of  $\alpha$ and its conjugate $\alpha'$ for $\alpha$ being quadratic (see   \cite{BS} Theorem \ref{thm:cfdual}. (see Appendix \ref{sec:arithm} below).
\end{proof}


\begin{appendix}

\section{Sturmian words} \label{subsec:sturm}
{\it Sturmian words} are infinite words over a binary alphabet that
have exactly $n+1$ subwords  of length $n$ for every positive integer
$n$. Sturmian words can  also be defined in a constructive way 
 as follows.
Let $\alpha \in (0,1)$. Let ${\mathbb T}^1=\R/\Z$ denote
 the one-dimensional torus. The rotation of angle $\alpha$ of
 ${\mathbb T}^1$ is defined by  $R_{\alpha}: {\mathbb T}^1 \to
{\mathbb T}^1, \ x \mapsto x+\alpha$. 
  For a  given real number  $\alpha$, we introduce the following  two
 partitions of ${\mathbb T}^1$:
\[\underline{I}_a=[0,1-\alpha), \ \ \underline{I}_b=[1-\alpha,1);\ \
\overline{I}_a=(0,1-\alpha], \ \ \overline{I}_b=(1-\alpha,1].\]
Tracing the one-sided (resp. two-sided) orbit of $R^n_\alpha(\rho)$, we   define   two
  infinite (resp.  bi-infinite) words for $\rho \in {\mathbb T}^1$: 
\[\underline{s}_{\alpha,\rho}(n)=\left \{ \begin{array}{l}
a \ \text{ if } R^n_\alpha(\rho)\in \underline{I}_1,\\
b \ \text{ if } R^n_\alpha(\rho)\in \underline{I}_2,\end{array} \right .\]
\[\overline{s}_{\alpha,\rho}(n)=\left \{ \begin{array}{l}
a \ \text{ if } R^n_\alpha(\rho)\in \overline{I}_1,\\
b \ \text{ if } R^n_\alpha(\rho)\in \overline{I}_2.\end{array} \right .\]

It is well known  (\cite{coven,morse}) that an infinite word is a
Sturmian word if and only if it is equal  either to 
$\overline{s}_{\alpha,\rho}$ or  to $\underline{s}_{\alpha,\rho}$ for  
some irrational number $\alpha$.

The notation $c_{\alpha}$ stands in all
 that follows   for
 $\overline{s}_{\alpha,\alpha}=\underline{s}_{\alpha,\alpha}$. This 
 particular Sturmian   word  is called  {\em characteristic  word}.
 
 According to  Theorem \ref{thm:intro},  invertible   word substitutions  on two letters are also  called {\em Sturmian  substitutions}.
 We recall that the set of Sturmian  substitutions 
 is a monoid, with one set of generators  being $\{E,L,\tilde{L}\}$
 \cite{wen,Lot2}.
   Moreover,   we will  use in  Appendix \ref{sec:arithm} the following facts (see \cite{Seebold98} and \cite{Lot2}):
   \begin{prop}\label{prop:car}
      Let $\sigma$ be a   primitive Sturmian substitution with frequency $\alpha$.  There exists  a Sturmian  substitution $\tau$ that is conjugate to $\sigma$  and  satisfies
 $\tau(c_{\alpha})=c_{\alpha}$.

  If $\sigma$  is a  Sturmian substitution that fixes 
  some  characteristic word, then 
  $\sigma$ can be decomposed over the monoid  generated  by   $\{E,L\}$.
  Conversely,   if  $\sigma$  is a  primitive  word substitution that  can be decomposed over the monoid  generated  by   $\{E,L\}$,
  then $\sigma$ is    Sturmian and satisfies
  $\sigma(c_{\alpha})=c_{\alpha}$, with  $\alpha$ being its  frequency.

  \end{prop}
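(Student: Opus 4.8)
The plan is to deduce all three assertions from a few classical facts about characteristic words, together with the reduction of an arbitrary Sturmian substitution into the monoid $\langle E,L\rangle$. The facts I take from \cite{Lot2,Seebold98} are that $E(c_\alpha)=c_{1-\alpha}$ and $L(c_\alpha)=c_{\alpha/(1+\alpha)}$; by an immediate induction on the length of a decomposition over $\{E,L\}$ it follows that every $\sigma\in\langle E,L\rangle$ sends each characteristic word to a characteristic word, the induced self-map $\mu_\sigma$ of the slope interval $(0,1)$ being precisely the projective action of $M_\sigma$ on the ray $(1-\alpha:\alpha)$. Assertion~(3) follows: such a $\sigma$ is a composition of invertible substitutions, hence Sturmian, and if it is primitive then $M_\sigma$ is a nonnegative primitive $2\times 2$ matrix, so $\mu_\sigma$ is a M\"obius transformation preserving $[0,1]$ with exactly one fixed point there --- namely the slope of the Perron--Frobenius eigenvector of $M_\sigma$, the subdominant eigendirection not being a nonnegative ray --- and that slope is by definition the frequency $\alpha$. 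Hence $\sigma(c_\alpha)=c_{\mu_\sigma(\alpha)}=c_\alpha$.

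For assertion~(1), fix a decomposition of the primitive Sturmian substitution $\sigma$ over the generators $E,L,\tilde{L}$ of \eqref{eq:generators}. A direct computation gives $\tilde{L}=\gamma_{a^{-1}}\circ L$, and repeated use of the identity $\rho\circ\gamma_w=\gamma_{\rho(w)}\circ\rho$ moves every occurrence of $\gamma_{a^{-1}}$ to the left of the composition, merging them into a single inner automorphism $\gamma_v$; this rewrites $\sigma$ as $\gamma_v\circ\tau$ with $\tau\in\langle E,L\rangle$, so $\sigma\sim\tau$. Abelianising (or invoking Theorem~\ref{thm:sturmconjug}) gives $M_\tau=M_\sigma$, so $\tau$ is primitive with the same frequency $\alpha$, and assertion~(3) yields $\tau(c_\alpha)=c_\alpha$.

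For assertion~(2), let $\sigma$ be Sturmian with $\sigma(c_\gamma)=c_\gamma$. Since $c_\gamma$ is aperiodic, $\sigma$ is either the identity (which lies in $\langle E,L\rangle$) or primitive; assume the latter. Abelianising the fixed-point equation forces $(1-\gamma,\gamma)$ to be the Perron--Frobenius eigenvector of $M_\sigma$, so $\gamma$ is the frequency of $\sigma$. Apply the reduction of assertion~(1): $\sigma=\gamma_v\circ\tau$ with $\tau\in\langle E,L\rangle$ and $M_\tau=M_\sigma$, hence $\tau$ has frequency $\gamma$ and, by assertion~(3), $\tau(c_\gamma)=c_\gamma$. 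By the description of the conjugating element in the remark after Definition~\ref{eq:defconj}, $v$ is a common suffix of $\tau(a)$ and $\tau(b)$, or else $v^{-1}$ is a common prefix of them; substituting the corresponding factorisation of $\tau(x)$ into $\sigma(x)=v\tau(x)v^{-1}$ shows that in both cases $\sigma(c_\gamma)$ differs from $\tau(c_\gamma)=c_\gamma$ only by prepending a finite word. Comparing with $\sigma(c_\gamma)=c_\gamma$ yields a word equation of the form $w c_\gamma=c_\gamma$ with $w$ nonnegative, which forces $w$ --- and hence $v$ --- to be empty, $c_\gamma$ being aperiodic. Thus $\sigma=\tau\in\langle E,L\rangle$.

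The step I expect to demand the most care is this last one: one must verify, using positivity of both $\sigma$ and $\tau$, that the conjugating element really is a common affix of the images (rather than something longer or of mixed sign), and then that the induced word equation genuinely forces that affix to vanish. The remaining ingredients --- the action of $E$ and $L$ on characteristic words, and the Perron--Frobenius/M\"obius fixed-point analysis --- are classical and routine, respectively.
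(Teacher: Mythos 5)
The paper does not actually prove Proposition \ref{prop:car}: it is stated as a known fact with a pointer to \cite{Seebold98} and \cite{Lot2}. Your argument is therefore necessarily a different route, and as far as I can check it is a correct, essentially self-contained one along the standard lines of those references. The two external facts you import, $E(c_\alpha)=c_{1-\alpha}$ and $L(c_\alpha)=c_{\alpha/(1+\alpha)}$, are exactly the identities the paper itself invokes without proof inside the proof of Theorem \ref{thm:cfdual} (there written as $G(c_\beta)=c_{1/(1+1/\beta)}$ and via the maps $\theta_m$), so your reliance on them matches the paper's own level of rigour. The reduction $\tilde L=\gamma_{a^{-1}}\circ L$ combined with $\rho\circ\gamma_w=\gamma_{\rho(w)}\circ\rho$ is the right way to rewrite a word in $E,L,\tilde L$ as $\gamma_v\circ\tau$ with $\tau$ in the monoid generated by $E$ and $L$, and the Perron--Frobenius fixed point of the projective action correctly yields $\tau(c_\alpha)=c_\alpha$. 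Two spots deserve an explicit sentence. First, in assertion (2), the claim that a non-identity Sturmian substitution fixing a characteristic word is primitive should be justified: a nonnegative unimodular $2\times 2$ matrix that is not primitive is triangular with $1$'s on the diagonal or equals $M_E$, hence has no positive eigenvector of irrational slope, so the Abelianised fixed-point equation excludes it. Second, the assertion that the conjugator $v$ is a common positive suffix of the $\tau(x)$, or that $v^{-1}$ is a common positive prefix, is exactly the unproved Remark following Definition \ref{eq:defconj}; a fully self-contained proof would also rule out a conjugator of mixed sign. Granting that remark, your word equation $w\,c_\gamma=c_\gamma$ together with the aperiodicity of $c_\gamma$ does force $v=\varepsilon$ and finishes the argument.
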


Note that frequently Sturmian words are defined as one-sided infinite
words. For our purposes it is rather natural to consider bi-infinite
Sturmian words. 

\begin{example}
The Fibonacci sequences, that is, the   sequences  of the  hull   $\Xc_{\sigma}$ of   the Fibonacci substitution $\sigma$
(see $\eqref{eq:fib1}$) are Sturmian words with  frequency parameter $\alpha =
\frac{\sqrt{5}-1}{2}$. 
\end{example}
A detailed   description of Sturmian words can be found in  Chapter 2
of \cite {Lot2}, see also Chapter  6  in  \cite{fogg}.

\section{Arithmetic  duality} \label{sec:arithm}
Let us  now   express   the  notion of   duality in terms  of
continued fraction  expansion.  
\begin{thm} \label{thm:cfdual}
Let  $\sigma$ be a   primitive invertible   substitution  over $\{a,b\}$ whose  substitution matrix has  determinant $1$.
The continued  fraction   expansion  of   the   dual   frequency $\alpha^{\ast}$  of  $\alpha$  satisfies the following:

\begin{enumerate}
\item 
if  $\alpha<1/2$,  then    $\alpha=[0;1+n_{1},\overline{n_2,\cdots, n_k, n_{k+1}+n_1}]$,  with $n_{k+1} 
\geq 0$   and $n_1\geq 1$
 \begin{itemize}
 \item
  if $n_{k+1}\geq 1$, then     $\alpha^{\ast}=[0;1,n_{k+1},\overline{n_k,\cdots, n_2, n_1+n_{k+1}}]$ with $k$ is  even
    \item
otherwise,     $\alpha ^{\ast}= [0;1+n_k,\overline{n_{k-1},\cdots, n_2, n_1,n_k}]$  with $k$ odd

\end{itemize}

\item
if  $\alpha  >1/2$,  then  $\alpha=[0;1,n_2,\overline{n_3,\cdots, n_{k-1}, n_{k}+n_2}]$,   with $n_{k}  \geq 0$,   $\alpha ^{\ast}= [0;1+n_{k},\overline{n_{k-1},\cdots, n_3, n_2+n_{k}}]$,  and $k$ is even if $n_k \neq 0$, $k$ is odd  otherwise. 
\end{enumerate}

\end{thm}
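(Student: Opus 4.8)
The plan is to convert the algebraic identity $\alpha^{\ast}=\dfrac{\alpha'-1}{2\alpha'-1}$ of Theorem~\ref{thm:conjugue} into a statement about continued fractions. This splits into two tasks: (i) record the shape of the continued fraction expansion of $\alpha$ itself, exploiting that $\sigma$ is invertible; and (ii) compute the continued fraction of the M\"obius image $t\mapsto\frac{t-1}{2t-1}$ of $\alpha'$.

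For task (i): since $\sigma$ is invertible it is Sturmian, hence by Proposition~\ref{prop:car} it is conjugate to a substitution fixing the characteristic word $c_\alpha$; therefore $\alpha$ is a quadratic irrational whose continued fraction is of Sturm type. Putting it in the normalised form of the statement --- first partial quotient $1+n_1$ with $n_1\geq1$ when $\alpha<1/2$ (resp.\ $[0;1,n_2,\dots]$ when $\alpha>1/2$) and last period entry $n_{k+1}+n_1$ with $n_{k+1}\geq0$ --- is exactly the classical Sturm-number condition that the last partial quotient of the period is at least the first partial quotient minus one (see \cite{BS}). The parity of $k$ in each regime is then read off from a decomposition of a suitable power of $\sigma$ over the generators $E,L$ together with the hypothesis $\det M_\sigma=1$.

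For task (ii) (say in the case $\alpha<1/2$): write $1/\alpha=(1+n_1)+1/\gamma$ with $\gamma=[\overline{n_2,\dots,n_k,n_1+n_{k+1}}]>1$. Algebraic conjugation commutes with the rational operations, so $1/\alpha'=(1+n_1)+1/\gamma'$, and Galois' theorem on purely periodic continued fractions gives $-1/\gamma'=[\overline{n_1+n_{k+1},n_k,\dots,n_2}]$; clearing denominators yields $1/\alpha'=1-n_{k+1}-1/\mu$ with $\mu=[\overline{n_k,n_{k-1},\dots,n_2,n_1+n_{k+1}}]$. Substituting into $\alpha^{\ast}=\frac{\alpha'-1}{2\alpha'-1}$ simplifies to $1/\alpha^{\ast}=1+\frac{1}{\,n_{k+1}+1/\mu\,}$. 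If $n_{k+1}\geq1$ this is already canonical and gives the first bullet; if $n_{k+1}=0$, absorbing the zero partial quotient via the identity $[\dots,a,0,b,\dots]=[\dots,a+b,\dots]$ produces $\alpha^{\ast}=[0;1+n_k,\overline{n_{k-1},\dots,n_2,n_1,n_k}]$, the second bullet. The case $\alpha>1/2$ is entirely parallel --- or may be deduced from the case $\alpha<1/2$ applied to $E\sigma E$, whose frequency is $1-\alpha$, using that complementation of the frequency is the continued fraction operation $[0;1,c_2,c_3,\dots]\leftrightarrow[0;1+c_2,c_3,\dots]$ --- and yields the single formula of part~(2), the boundary case $n_k=0$ again being absorbed into the parity statement. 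The routine part is the algebra; the delicate point, which I expect to be the main obstacle, is keeping every intermediate continued fraction in canonical form (all partial quotients $\geq1$, correct treatment of $n_{k+1}=0$ and $n_k=0$) and ensuring that the Sturm-type normal form of $\alpha$ in (i), with the exact offsets, is matched to the generator decomposition of $\sigma$ so that the announced parity of $k$ comes out.
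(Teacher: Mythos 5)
Your argument is correct, but it reaches the expansion of $\alpha^{\ast}$ by a genuinely different route than the paper. The paper's proof is entirely combinatorial on the substitution side: after reducing to $\sigma(c_\alpha)=c_\alpha$ and decomposing $\sigma=L^{n_1}EL^{n_2}\cdots EL^{n_{k+1}}$, it computes the generator decomposition of $\sigma^{\ast}$ explicitly (via $L^{\ast}=R$, $E^{\ast}=E$ and $R\circ E=E\circ L$) and then reads off \emph{both} continued fractions from the two decompositions using the maps $\theta_m=L^{m-1}EL$ and the rule $\theta_m(c_\beta)=c_{1/(m+\beta)}$; Theorem \ref{thm:conjugue} is never invoked. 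You instead take the expansion of $\alpha$ as given (your task (i), which coincides with the first half of the paper's argument) and obtain $\alpha^{\ast}$ by pushing the M\"obius relation $\alpha^{\ast}=\frac{\alpha'-1}{2\alpha'-1}$ through Galois' theorem on purely periodic continued fractions; I checked your computation $\frac{\alpha^{\ast}}{1-\alpha^{\ast}}=1-1/\alpha'=n_{k+1}+1/\mu$ and it is exactly the paper's appendix computation run in reverse. This is legitimate and arguably shorter on the continued-fraction side, \emph{provided} you cite the main-text proof of Theorem \ref{thm:conjugue} (the one via $M_{\sigma^{\ast}}=M_{\sigma}^T$ and orthogonality of eigenvectors in Remark \ref{rem:sigmastar}), not the appendix's second proof of that theorem, which itself relies on Theorem \ref{thm:cfdual} and would make your argument circular. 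What the paper's route buys is independence from Theorem \ref{thm:conjugue} (indeed it then re-derives that theorem as a corollary); what yours buys is that only one generator decomposition needs to be tracked. Two small points to tighten: the parity of $k$ should be read off from the decomposition of $\sigma$ itself (after conjugating via Proposition \ref{prop:car}), not of "a suitable power of $\sigma$", since taking powers changes both the number of occurrences of $E$ and the period; and the application of Galois' theorem should note that $\gamma=[\overline{n_2,\dots,n_k,n_1+n_{k+1}}]$ is reduced because all its partial quotients are $\geq 1$ (using $n_1\geq 1$).
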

The description of  the continued fraction  expansion of $\alpha$  given in Theorem \ref{thm:cfdual}
is due to   \cite{CM};  such an $\alpha$ is a called  a {\em Sturm  number}; for more details, see  \cite{Allau}.
See also in the same flavour \cite{bor}.

\begin{proof}
We follow  here the proof of  Theorem 3.7    of \cite{BS} (see  also  the   proof  of  Theorem 2.3.25   of  \cite{Lot2}).

Let  $c_{\alpha}$  (resp.  $c_{\alpha}^{\ast}$) stand for  the characteristic word of  frequency $\alpha$ (resp. $ \alpha^{\ast}$), such as defined
in Appendix  \ref{subsec:sturm}.
Without loss of generality (by possibly taking a conjugate of $\sigma$ by Proposition \ref{prop:car}),  we  can assume 
  \begin{equation}\label{eq:car}
  \sigma (c_{\alpha})=c_{\alpha}.
  \end{equation}
  
Hence, according  to Proposition  \ref{prop:car},  $\sigma$   can be decomposed   over the monoid $\{E,L\}$, i.e., 
\[\sigma=L^{n_1}  E  L^{n_2}   \cdots  E   L^{n_{k +1}},\]
with $k\geq 1$, $n_1, n_{k+1} \geq 0$, $n_2, \cdots,n_k \geq1$.
Note  that  the parity of  $k$   yields  the   sign of the determinant of  $M_{\sigma}$.

Furthermore, one has $L^{\ast}=R$ and $E^{\ast}=E$ (see Example \ref{ex:dual2}).
Hence the second assertion of Theorem \ref{thm:defdual}  yields 
 \[\sigma^{\ast}=R^{n_{k+1}} E \cdots  E R^{n_1}.\]
 Moreover
 $$ \sigma^{\ast}=E L^{n_{k+1}} \cdots  E L^{n_1}E$$
by using   $R\circ E=E\circ L$.
One deduces  from Proposition \ref{prop:car}  that $ \sigma^{\ast}  (c_{\alpha^{\ast}})=c_{\alpha^{\ast}}.$

  Let us   translate  (\ref{eq:car}) on the continued  fraction expansion of $\alpha$.
  For $m\geq 1$, let $$\theta_m:=L^{m-1}EL.$$   One checks that for every $\beta \in (0,1)$
$$\theta_m (c_{\beta})=c_{1/(m+\alpha)} \mbox{ and } 
 G(c_{\beta})=c_{\frac{1}{1+1/\beta}}.$$

We thus  obtain that if 
\begin{equation}\label{eq:thetater}
c_{\beta}= \theta_{m_1}  \circ  \theta_{m_2} \circ \cdots \circ \theta_{m_k}  \circ  L ^{m_{k+1}}(c_{\gamma}),
\end{equation}
with $k,m_1, \cdots, m_k \geq 1$,   $ 0 <\beta <1$, $0 < \gamma <1$,   $\gamma=[0; \ell_1, \ell_2, \cdots, \ell_i, \cdots]$, 
then  $$\beta=[0;m_1,m_2, \cdots,m_k, m_{k+1}-1+ \ell_1, \ell_2, \cdots, \ell_i ,\cdots].$$

With the previous  notation, 
$$\sigma= \theta_{n_1+1}   \circ  \theta_{n_2} \circ \cdots \circ \theta_{n_k}  \circ  L ^{n_{k+1}-1}.$$
We  distinguish  several cases according to the values of $n_1$ and $n_{k+1}$.
\begin{itemize}
\item We first  assume  $n_1>0$.  Hence   by (\ref{eq:thetater}),  one has  $\alpha=[0;1+n_1, \overline{ n_2,\cdots,n_k, n_{k+1}+n_1}]$ if $n_{k+1} >0$.
If  $n_{k+1}=0$,  we use the fact that
$$E \circ \sigma  \circ E= EL^{n_{1}}E \cdots  L^{n_k} =   \theta_1   \theta_{n_{1}}  \cdots \theta_{n_{k-1}} L^{n_k-1}.$$
  Since $\sigma  (c_{\alpha})=c_{\alpha}$, then 
  $E \circ \sigma \circ E (c_{1-\alpha})=c_{1-\alpha}$. We deduce from
  (\ref{eq:thetater})   that 
 \[1-\alpha=[0;1,\overline{n_{1},  \cdots, n_{k-1},{n_k}}  ].\]
 Since 
$$\alpha=\frac{1}{1+\frac{1-\alpha}{\alpha}}=\frac{1}{1+\frac{1-\alpha}{1-(1-\alpha)}}=\frac{1}{1+\frac{1}{\frac{1}{1-\alpha}-1}},$$
  we get 
  \[\alpha=[0;1+n_1,\overline{n_{2},  \cdots, n_{k},{n_1}}  ].\]

 Furthermore, 
  \[E \circ \sigma^{\ast}  \circ E=L^{n_{k+1}}E \cdots  E L^{n_1}=      \theta_{1+n_{k+1}}  \theta_{n_k} \cdots    \theta_{n_2}  L^{n_1-1}.\]

Since $\sigma^{\ast}    (c_{\alpha^{\ast}})=c_{\alpha^{\ast}}$, then 
  $E \circ \sigma^{\ast}  \circ E (c_{1-\alpha^{\ast}})=c_{1-\alpha^{\ast}}$. We deduce from
  (\ref{eq:thetater})   that 
 \[1-\alpha^{\ast}=[0;1+n_{k+1}, \overline{n_k,\cdots,n_1+n_{k+1}}  ].\]
From
$$\alpha^{\ast}=\frac{1}{1+\frac{1}{\frac{1}{1-\alpha^{\ast}}-1}},$$
  we obtain 
  \[\alpha ^{\ast}= [0;1,n_{k+1},\overline{n_k,\cdots, n_2, n_1+n_{k+1}}] \    \text{ if 
  } n_{k+1}>0\]
    and  
  \[\alpha ^{\ast}= [0;1+n_k,\overline{n_{k-1},\cdots, n_2, n_1,n_k}]  \   \text{  if  } n_{k+1}=0.\]

\item We now  assume  $n_1=0$.
One has  \[\sigma^{\ast}=R^{n_{k+1}} E \cdots   R^{n_2} E =E L^{n_{k+1}} \cdots  E L^{n_2}.\]
\begin{itemize}
\item
We   assume $n_{k+1}>0$. One has  $\alpha=[0;1,\overline{n_2,\cdots, n_k, n_{k+1}}]$.
  Moreover
\[\sigma^{\ast}=    \theta_1  \theta_{n_{k+1}}  \cdots    \theta_{n_3}  L^{n_2-1}.\]
We thus get 
\[\alpha^{\ast}=[0;1,\overline{n_{k+1},\cdots  n_3,n_2}].\]

\item We now  assume  $n_{k+1}=0$.   One has
$$E \circ \sigma  \circ E= L^{n_{2}}E \cdots  L^{n_k} =      \theta_{n_{2}+1}  \cdots \theta_{n_{k-1}} L^{n_k-1}.$$
  Since $\sigma  (c_{\alpha})=c_{\alpha}$, then 
  $E \circ \sigma \circ E (c_{1-\alpha})=c_{1-\alpha}$. We deduce from
  (\ref{eq:thetater})   that 
 \[1-\alpha=[0;n_2+1,\overline{n_{3},  \cdots, n_{k-1},{n_k+n_2}}  ].\]
 Since 
$$\alpha=\frac{1}{1+\frac{1}{\frac{1}{1-\alpha}-1}},$$
  we get 
  \[\alpha=[0;1,n_2, \overline{n_{3},  \cdots,  n_{k-1},n_{k}+{n_2}}  ].\]

Furthermore  \[\sigma^{\ast}=ER^{n_{k}} E \cdots   R^{n_2}E=    L^{n_{k}} E \cdots    E L^{n_2}=     \theta_{1+n_{k}}  \cdots    \theta_{n_3}  L^{n_2-1}.\]
We thus get   \[\alpha ^{\ast}= [0;1+n_{k},\overline{n_{k-1},\cdots, n_3, n_2+n_{k}}].\]

\end{itemize}
\end{itemize}
In conclusion,   we have proved that
\begin{enumerate}
\item  $\alpha ^{\ast}= [0;1,n_{k+1},\overline{n_k,\cdots, n_2, n_1+n_{k+1}}]$ with $k$ even
if $\alpha=[0;1+n_{1},\overline{n_2,\cdots, n_k, n_{k+1}+n_1}]$,  with $n_1>0$  and $n_{k+1}>0$;
\item
  $\alpha ^{\ast}= [0;1+n_k,\overline{n_{k-1},\cdots, n_2, n_1,n_k}]$ with $k$ odd
if $\alpha=[0;1+n_1,\overline{n_2,\cdots, n_k,n_1}]$ with $n_1>0$;
\item
$\alpha^{\ast}=[0;1,\overline{n_{k+1},\cdots  n_3,n_2}]$ with $k$ odd
if   $\alpha=[0;1,\overline{n_2,\cdots, n_k, n_{k+1}}]$ with $n_{k+1} >0$;
\item
$\alpha ^{\ast}= [0;1+n_{k},\overline{n_{k-1},\cdots, n_3, n_2+n_{k}}]$ with $k$ even
if  $\alpha=[0;1,n_{2},\overline{n_3,\cdots, n_{k-1}, n_{k}+n_2}]$.
\end{enumerate}
\end{proof}
\bigskip

We now give the proof of Theorem \ref{thm:conjugue}: we  want to prove that 
$$\alpha^{\ast}= \frac{1-\alpha'}{2\alpha'-1}, $$
where $\alpha'$ is the  algebraic conjugate of $\alpha$.
\begin{proof} {\em (of Theorem \ref{thm:conjugue})} 
Our proof is inspired  by    the proof of  Theorem 2.3.26 in  \cite{Lot2}.
Let $\alpha$ be the frequency of a   primitive two-letter substitution $\sigma$.
Without loss of generality, we assume  again $\sigma(c_{\alpha})=c_{\alpha}$.
We will use the fact that if  $\gamma=[a_1;\overline{a_2,\cdots,  a_n, a_1}]$, then $$-1/\gamma'=[a_n;\overline{a_{n-1}, \cdots,a_1,a_n}].$$

\begin{itemize}
\item 
We first assume $\alpha<1/2$. According to Theorem \ref{thm:cfdual},     $\alpha=[0;1+n_{1},\overline{n_2,\cdots, n_k, n_{k+1}+n_1}]$,  with $n_{k+1} 
\geq 0$   and $n_1\geq 1$.  Let  \[\gamma=[n_2;\overline{n_3,\cdots, n_k, n_{k+1}+n_1,n_2}].\]  We have 
$1/\alpha=1+n_1+ 1/\gamma  $  and $1/\alpha'=1+n_1+ 1/\gamma'$.
One has  \[-1/\gamma'=[n_{k+1}+n_1;\overline{n_k,\cdots, n_2, n_{k+1}+n_1}].\]
We deduce  that $-(1/\gamma'  +n_1+n_{k+1})=[0;\overline{n_k, \cdots,n_2,n_1+n_{k+1}}]$.
 \begin{itemize}
 \item
   If $n_{k+1}\geq 1$, then 
  $\alpha^{\ast} =[0;1,n_{k+1},\overline{n_k,\cdots, n_2, n_1+n_{k+1}}]$, by Theorem  \ref{thm:cfdual}.
    We obtain
    \[ \frac{\alpha^{\ast}}{1-\alpha^{\ast}}==\frac{1}{\frac{1}{\alpha^{\ast}}-1}=n_{k+1}-(1/\gamma'  +n_1+n_{k+1})=1-1/\alpha',\]
    and thus  \[\alpha^{\ast}= \frac{\alpha'-1}{2\alpha'-1}.\]
    \item
    If $n_{k+1}=0$, then 
    $\alpha^{\ast}=[0;1+n_k,\overline{n_{k-1},\cdots, n_2, n_1,n_k}].$
One  gets  $-(1/\gamma'  +n_1)=[0;\overline{n_k, \cdots,n_2,n_1}]$.
We   deduce that
\[\frac{1}{1/\alpha^{\ast}-1}=-1/\gamma' -n_1=1-1/\alpha'\]
and thus
\[\alpha^{\ast}= \frac{\alpha'-1}{2\alpha'-1}.\]

\end{itemize}

\item
If  $\alpha  >1/2$,  then  $\alpha=[0;1,n_2,\overline{n_3,\cdots, n_{k-1}, n_{k}+n_2}]$,   with $n_{k}  \geq 0$. Let  \[\gamma=[n_3;\overline{n_4,\cdots, n_{k-1}, n_{k}+n_2, n_3}].\]  We have 
$\frac{\alpha}{1-\alpha}=n_2+ 1/\gamma  $  and $\frac{\alpha'}{1-\alpha'}=n_2+ 1/\gamma'$.
One has  \[-1/\gamma'=[n_{k}+n_2;\overline{n_{k-1},\cdots, n_3, n_k+n_2}].\]
We deduce  that $-(1/\gamma'  +n_k+n_{2})=[0;\overline{n_{k-1}, \cdots,n_3,n_2+n_{k}}]$.

 One has $\alpha ^{\ast}= [0;1+n_{k},\overline{n_{k-1},\cdots, n_3, n_2+n_{k}}]$.
 Hence \[ 1/\alpha^{\ast}=1+n_k -(1/\gamma'  +n_k+n_{2})=1-\frac{\alpha'}{1-\alpha'},\]
 and thus \[\alpha^{\ast}= \frac{\alpha'-1}{2\alpha'-1}.\]

\end{itemize}
\end{proof}

\end{appendix}

\section*{Acknowledgements} We would like to thank warmly both anonymous referees for their
useful comments.  We are in particular greatly indebted to one of  the referees for the  much   simpler proof
of Theorem \ref{thm:conjugue} and for a simplification in the proof of Theorem \ref{thm:rigidity}. DF and VS would like to express their
gratitude to the German Research Council (DFG) within the CRC 701.

\end{document}